\documentclass[12pt]{article}

\usepackage[colorlinks=true, pdfstartview=FitV, linkcolor=blue, citecolor=blue, urlcolor=blue]{hyperref}

\usepackage[OT2,OT1]{fontenc}

\usepackage{amssymb,amsmath, amscd,amsthm,bbm}
\usepackage{times, verbatim}
\usepackage{graphicx}
\usepackage[all]{xy}
\usepackage{comment}

\DeclareFontFamily{OT1}{rsfs}{}
\DeclareFontShape{OT1}{rsfs}{n}{it}{<-> rsfs10}{}
\DeclareMathAlphabet{\mathscr}{OT1}{rsfs}{n}{it}

\newtheorem{theorem}{Theorem}
\newtheorem{theoremx}{Theorem}[section]

\newtheorem{corollaryx}[theoremx]{Corollary}
\newtheorem{lemmax}[theoremx]{Lemma}
\newtheorem{propositionx}[theoremx]{Proposition}
\newtheorem{corollary}[theorem]{Corollary}
\newtheorem{lemma}[theorem]{Lemma}
\newtheorem{defn}[theorem]{Definition}
\newtheorem{proposition}[theorem]{Proposition}
\newtheorem{remark}[theorem]{Remark}
\newtheorem{notation}[theoremx]{Notation}

\addtolength{\textwidth}{100pt}
\addtolength{\evensidemargin}{-50pt}
\addtolength{\oddsidemargin}{-50pt}
\addtolength{\topmargin}{-70pt}
\addtolength{\textheight}{1.5in}

\setlength{\parindent}{.5in}
\setlength{\parskip}{0pt}

\def\iso{{\cong}}
\def\p{{\mathfrak p}}
\def\q{{\mathfrak q}}
\def\cF{{\mathcal F}}
\def\cS{{\mathcal S}}
\def\X{C} 
\def\RC{{\mathcal C}}
\def\A{{\mathbb A}}
\def\Cy{{\rm C}}

\def\tam #1#2{\tilde c_{\scriptscriptstyle #1/#2}}

\def\llara{\langle,\rangle}
\def\neron#1{\omega_{#1}^o}
\let\le\leqslant
\let\ge\geqslant
\def\z{y}

\def\fun {admissible}

\DeclareMathOperator{\rk}{rk}
\DeclareMathOperator{\ord}{ord}
\DeclareMathOperator{\Ind}{Ind}

\DeclareMathOperator{\End}{End}

\DeclareMathOperator{\SL}{SL}
\DeclareMathOperator{\GL}{GL}

\DeclareMathOperator{\disc}{disc}
\DeclareMathOperator{\Res}{Res}
\DeclareMathOperator{\Spec}{Spec}
\DeclareMathOperator{\Sel}{Sel}
\DeclareMathOperator{\Gal}{Gal}
\DeclareMathOperator{\Pic}{Pic}

\DeclareMathOperator{\Sym}{Sym}
\DeclareMathOperator{\Span}{Span}
\DeclareMathOperator{\Stab}{Stab}
\DeclareMathOperator{\Br}{Br}

\def\be{{\beta}}

\def\disc{{\rm disc}}

\def\dim{{\rm dim}}

\def\Div{{\rm Div}}
\def\Vol{{\rm Vol}}
\def\R{{\mathbb R}}
\def\F{{\mathbb F}}
\def\FF{{\mathcal F}}
\def\RR{{\mathcal R}}

\def\Q{{\mathbb Q}}

\def\H{{\mathcal H}}

\def\J{{\mathcal J}}
\def\C{{\mathcal C}}

\def\Z{{\mathbb Z}}
\def\P{{\mathbb P}}
\def\F{{\mathbb F}}
\def\FF{{\mathcal F}}
\def\Q{{\mathbb Q}}
\def\C{{\mathbb C}}
\def\A{{\mathbb A}}

\def\triv{{\mathbf 1}}
\def\H{{\mathcal H}}

\def\Jac{{\textrm{Jac}}}

\def\CC{{\mathcal C}}

\def\RR{{\mathcal R}}
\def\II{{\mathcal I}}

\DeclareSymbolFont{cyrletters}{OT2}{wncyr}{m}{n}
\DeclareMathSymbol{\Sha}{\mathalpha}{cyrletters}{"58}

\def\Rlk {{\rm{Res}}_{L/K}}
\def\al {\alpha}
\def\TIMES {\times}
\newcommand{\gl}[2]{{^{#2\!}#1}}
\newcommand{\w}[1]{\widetilde{#1}}
\renewcommand{\O}{{\rm O}}
\newcommand*{\longhookrightarrow}{\ensuremath{\lhook\joinrel\relbar\joinrel\rightarrow}}

\setcounter{tocdepth}{1}

\title{\!\!\!A positive proportion of locally soluble hyperelliptic curves\!\\ over $\Q$ \:\!have no point over any odd degree extension}

\author{Manjul Bhargava, Benedict H.\ Gross, and Xiaoheng Wang\\[.025in](with
  an appendix by Tim and Vladimir Dokchitser)}

\begin{document}
\maketitle


\section{Introduction}\label{sec:intro}

In this article, for any fixed genus $g\geq 1$,
we prove that a positive proportion of hyperelliptic curves over $\Q$ of genus $g$ have points over $\R$ and over $\Q_p$ for all $p$, but have no points globally over {\it any} extension of $\Q$ of odd degree.

By a hyperelliptic curve over
$\Q$, we mean a smooth, geometrically irreducible, complete curve $C$ over $\Q$ equipped with a fixed
map of degree 2 to $\P^1$ defined over~$\Q$.
Thus any hyperelliptic curve $C$ over $\Q$ of genus~$g$ can be
embedded in weighted projective space $\P(1,1,g+1)$ and
expressed by an equation of the form
\begin{equation}\label{hypereq}
C: z^2 = f(x,y) = f_0x^n+f_1x^{n-1}y+\cdots+f_ny^n
\end{equation}
where $n=2g+2$, the coefficients $f_i$ lie in $\Z$, and $f$ factors into distinct linear factors over $\bar\Q$.
Define the height $H(C)$ of~$C$~by
\begin{equation}\label{heightdef}
 H(C):= H(f):=\max\{|f_i|\}.
 \end{equation}
Then there are clearly only finitely many integral equations (\ref{hypereq}) of height less than~$X$, and we use the height to enumerate the hyperelliptic curves of a fixed genus $g$ over $\Q$.

We say that a variety over $\Q$ is {\it locally soluble} if it has a point over $\Q_\nu$ for every place $\nu$ of~$\Q$, and is {\it soluble} if it has a point over $\Q$. It is known that most hyperelliptic curves over $\Q$ of any fixed genus $g\geq 1$ when ordered by height are locally soluble (cf.\ \cite{PS2} and \cite{BCF},
where it is shown that more than 75\% of hyperelliptic curves have this property).

The purpose of this paper
is to prove the following theorem.

\begin{theorem}\label{count2}
Fix any $g\geq 1$.  Then a positive proportion of locally soluble hyperelliptic curves over~$\Q$ of~genus~$g$
have no points over {any} odd degree extension of~$\Q$.
\end{theorem}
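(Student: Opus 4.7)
The plan is to reduce Theorem~\ref{count2} to a statement about the non-triviality of the $\Pic^1$-class in the $2$-Selmer group of $\Jac(C)$, and then use arithmetic invariant theory to count. For the reduction, I would first observe that $C$ acquires a point over some odd-degree extension of $\Q$ if and only if $C$ has a $\Q$-rational divisor class of degree $1$: the trace of an odd-degree point is a $\Q$-rational divisor of odd degree, which combined with the hyperelliptic class (of degree $2$) yields a rational class of degree $1$; conversely, a rational degree-$1$ class shifted by a large multiple of the hyperelliptic class becomes representable by an effective rational divisor of odd degree, and its support must contain a $\Gal(\bar\Q/\Q)$-orbit of odd cardinality. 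Thus it suffices to show that $[\Pic^1(C)]\in H^1(\Q,\Jac(C))$ is non-zero for a positive proportion of locally soluble $C$. Since $2[\Pic^1]=[\Pic^2]=0$, and since local solubility of $C$ implies local solubility of the $\Jac(C)$-torsor $\Pic^1(C)$, this class lifts canonically to a distinguished element $c(C)\in\Sel_2(\Jac(C))$.

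Next I would invoke the pencil-of-quadrics parametrization developed in the body of the paper: elements of $\Sel_2(\Jac(C))$ correspond bijectively to certain locally soluble rational orbits of a reductive group $G$ acting on a representation $V$ whose ring of invariants cuts out the family of genus-$g$ hyperelliptic curves. The class $c(C)$ corresponds to a specific, algebraically identifiable orbit coming from the tautological pencil attached to $C$, so the question becomes whether that orbit lies in the identity coset. Using Bhargava's geometry-of-numbers averaging over a fundamental domain for $G(\Z)$ acting on $V(\R)$, and imposing local solubility by a uniform sieve in the style of~\cite{PS2}, I would obtain an asymptotic upper bound on the average of $\#\Sel_2(\Jac(C))$ as $C$ varies over locally soluble hyperelliptic curves of genus $g$ and height at most~$X$. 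Combined with the parity formula for $\#\Sha(\Jac(C))[2]$ supplied by the Dokchitser--Dokchitser appendix---which expresses the parity of $\dim_{\F_2}\Sha(\Jac(C))[2]$ in terms of computable local invariants sensitive to whether $c(C)=0$---the bounded Selmer average forces $c(C)\ne 0$ on a set of positive density, completing the proof via the reduction above.

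The main obstacle, I expect, will be the uniformity required in the orbit count. The ``cuspidal'' regions of the fundamental domain for $G(\Z)\backslash V(\R)$ must be shown to contain only reducible or distinguished orbits that do not correspond to genuine non-trivial Selmer classes, and the local-solubility sieve must be uniform across all primes---including the bad primes dividing $\disc(f)$, where bounds on the number of orbits with prescribed reduction type are delicate and where a tail estimate is needed so that the infinite product of local densities converges to the global proportion. Establishing these uniform estimates, rather than the comparatively cleaner algebraic identification of $c(C)$ as a specific orbit, will be the technical heart of the argument.
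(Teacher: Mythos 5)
Your reduction of Theorem~\ref{count2} to the nonvanishing of $[J^1]$ in $H^1(\Q,J)$ is correct (it is exactly the equivalence of Theorems~\ref{count2} and~\ref{count3}), but the argument you build on top of it has a genuine logical gap. The class $[J^1]$ does lift canonically to $c(C)=[W[2]]\in\Sel_2(J)$, but the kernel of the map $\Sel_2(J)\to H^1(\Q,J)$ is $J(\Q)/2J(\Q)$, so proving $c(C)\neq 0$ in $\Sel_2(J)$ does not prove $[J^1]\neq 0$. In fact $c(C)\neq 0$ for $100\%$ of curves for elementary reasons (a rational point of $W[2]$ is an odd factorization of $f(x,y)$ over $\Q$, and such factorizations rarely exist) --- no averaging is needed for that --- and yet this says nothing about whether $J^1(\Q)$ is empty; you would still have to rule out $c(C)$ lying in the image of the Kummer map from $J(\Q)/2J(\Q)$, and no step of your proposal addresses this. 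A second, related problem is that you propose to average $\#\Sel_2(\Jac(C))$, but the pencil-of-quadrics representation of this paper parametrizes the $2$-Selmer \emph{set} of the torsor $J^1$, not the $2$-Selmer group of $J$: the locally soluble orbits with invariant form $f$ form a possibly empty torsor under $\Sel_2(J)$, so there is no ``identity coset,'' and there is in general no ``tautological pencil attached to $C$'' --- by Theorem~\ref{genJ} a pencil with discriminant $f$ exists only when $[J^1_{\frak m}]$ is divisible by $2$ in $H^1(\Q,J_{\frak m})$.

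The paper's actual deduction runs differently, and you are missing its two key ingredients. First, the quantity averaged is $\#\Sel_2(J^1)$, not $\#\Sel_2(J)$, and the average equals $2$ (Theorem~\ref{count}); the point of counting the Selmer set of the torsor is precisely that its possible \emph{emptiness} is what is being detected, and $\Sel_2(J^1)=\varnothing$ directly forces $J^1(\Q)=\varnothing$ because a rational point of $J^1$ yields a locally soluble two-cover. Second, when $\Sel_2(J^1)$ is nonempty it has cardinality $\#\Sel_2(J)$, which is at least $2$ for $100\%$ of curves (by the nontriviality of $W[2]$ noted above) and hence at least $4$ whenever the $2$-Selmer rank is even; the Dokchitser--Dokchitser appendix supplies a positive proportion of curves with even $2$-Selmer rank (it is not a parity formula for $\Sha[2]$ ``sensitive to whether $c(C)=0$''). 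Averaging then forces $\Sel_2(J^1)=\varnothing$ on a set of positive density, since otherwise the average of $\#\Sel_2(J^1)$ would exceed $2$. Your closing paragraph correctly flags the uniformity estimates as technically demanding, but the decisive missing ideas are the two above, not the sieve.
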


Let $J=\Pic^0_{C/\Q}$ denote the Jacobian of $C$ over $\Q$, which is an abelian
variety of dimension~$g$. The points of~$J$ over a finite extension
$K$ of $\Q$ are the divisor classes of degree zero on $C$ that are
rational over~$K$. (When $C$ is locally soluble, we will see that
every $K$-rational divisor class on $C$ is represented by a
$K$-rational divisor.) Let $J^1=\Pic^1_{C/\Q}$ denote the principal homogeneous
space for~$J$ whose points correspond to the divisor classes of
degree one on $C$. A point $P$ on~$C$ defined over an extension field $K/\Q$
of odd degree~$k$ gives a rational point on $J^1,$ by taking the
class of the degree-one divisor that is the sum of the distinct
conjugates of $P$ minus~$(k-1)/2$ times the hyperelliptic~class~$d$
obtained by pulling back $\mathcal O(1)$ from~$\P^1$.
Thus Theorem \ref{count2} is
equivalent to the following:

\begin{theorem}\label{count3}
Fix any $g\geq 1$. For a positive proportion of locally soluble hyperelliptic curves $C$ over~$\Q$ of genus~$g$, the variety $J^1$ has no rational points.
\end{theorem}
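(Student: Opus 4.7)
The plan is to reduce the non-existence of $\Q$-points on $J^1$ to a non-triviality statement about its class in $H^1(\Q,J)$ and then carry out a 2-descent on $J$. Because the hyperelliptic class $d$ is a rational divisor class of degree $2$, $[J^1]$ is 2-torsion in $H^1(\Q,J)$, and if $C$ is locally soluble then the torsor $J^1$ acquires $\Q_v$-points at every place, so $[J^1]\in\Sha(J)[2]$. From the descent exact sequence
$$0 \to J(\Q)/2J(\Q) \to \Sel_2(J) \to \Sha(J)[2] \to 0,$$
it suffices to prove that a positive proportion of locally soluble $C$ of genus $g$ satisfy $[J^1]\neq 0$ in $\Sha(J)[2]$.

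The approach is to parametrize elements of $\Sel_2(J)$ by rational orbits arising from the geometry of \emph{pencils of quadrics} in $\P^{2g+1}$, as the paper's title foreshadows. To a hyperelliptic curve $C: z^2 = f(x,y)$ one associates a generic pencil of quadrics $\{Q_t\}$ whose discriminant polynomial is $f$; $2$-Selmer elements of $J$ then correspond to rational isotropic-subspace refinements of this pencil, packaged as orbits of a rational algebraic group (essentially a twist of $\mathrm{PSO}_{2g+2}$) acting on an appropriate representation. In this dictionary, the class $[J^1]$ itself arises from a canonical \emph{distinguished} orbit constructible directly from the pencil, producing a lift $\xi\in\Sel_2(J)$ of $[J^1]$ whenever $C$ is locally soluble. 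Thus $\Sel_2(J)$ always contains both $0$ and $\xi$, and the remaining task is to show that typically no other elements appear.

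To effect this, I would import the geometry-of-numbers and integral-orbit-counting machinery of Bhargava and coauthors: choose an integral model of the representation whose $G(\Z)$-orbits parametrize $\Sel_2$-classes, build a fundamental domain, apply Davenport-style lattice-point asymptotics to count orbits of bounded height, cut off the cusp of the fundamental domain to discard reducible or degenerate orbits, and sieve for the congruence conditions cutting out local solubility of $C$. The desired output is an average bound on $|\Sel_2(J)|$ over locally soluble $C$ of height at most $X$ sharp enough that a positive proportion of such $C$ have $\Sel_2(J) = \{0,\xi\}$ exactly. For those $C$, the subgroup $J(\Q)/2J(\Q)\hookrightarrow\Sel_2(J)$ is either $\{0\}$ or all of $\{0,\xi\}$; a genericity argument — using that $f$ is typically irreducible, so $J(\Q)[2]=0$, and that $\xi$ does not arise from a $\Q$-point of $J$ on generic pencils — rules out the second possibility on positive density. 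Hence $\Sha(J)[2]\cong\Sel_2(J)\cong\F_2$, with nontrivial element $[J^1]\neq 0$, so $J^1(\Q)=\emptyset$.

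The main obstacle is the sharp average bound on $|\Sel_2(J)|$: one must identify the correct integral orbit parametrization, carry out a subtle analysis of the cuspidal region of the fundamental domain (where the bulk of the technical work concentrates), and align the orbit-theoretic local conditions with those defining local solubility of $C$. With that in hand, the pencils-of-quadrics framework supplies both the integral models and the explicit distinguished orbit representing $[J^1]$, completing the argument.
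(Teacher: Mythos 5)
There is a genuine gap, and it sits at the heart of your plan. The orbits of $(\SL_n/\mu_2)(\Q)$ on pairs of quadrics with $\disc(Ax-By)=f(x,y)$ do \emph{not} parametrize $\Sel_2(J)$: the Fano variety $F(A,B)$ is a torsor with $2[F]=[J^1]$, i.e.\ a two-cover of $J^1$, so the locally soluble orbits are in bijection with the $2$-Selmer \emph{set} $\Sel_2(J^1)$ of the torsor $J^1$ (Theorem \ref{2Selpar}). This set is a principal homogeneous space under the group $\Sel_2(J)$ when nonempty, but it can be \emph{empty} --- and that possibility is precisely what the paper exploits. Consequently there is no ``canonical distinguished orbit'' lifting $[J^1]$ to $\Sel_2(J)$: the canonical lift of $[J^1]$ is the class of the $2$-torsion torsor $W[2]$, which is not an orbit, and the existence of \emph{any} locally soluble orbit is a nontrivial condition (equivalent to $W[2]$ being divisible by $2$ in $\Sel_4(J)$, Theorem \ref{thm:lsoluble}). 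So the average you can actually compute with this representation is that of $\#\Sel_2(J^1)$ (which the paper shows equals $2$), not of $\#\Sel_2(J)$; the latter is not accessible here and is not expected to be close to $2$.

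The second gap is your final ``genericity argument.'' Even granting $\Sel_2(J)=\{0,\xi\}$ with $\xi=[W[2]]$, you must exclude the case $J(\Q)/2J(\Q)=\{0,\xi\}$, i.e.\ that $J$ has Mordell--Weil rank $1$ with $\xi$ in the image of the Kummer map. Nothing about irreducibility of $f$ rules this out; by Remark 1.9 of the paper, whenever $J^1(\Q)\neq\varnothing$ and $f$ has no rational odd or even factorization, $J(\Q)$ \emph{automatically} has positive rank, so the configuration you need to exclude is exactly the one that occurs whenever the theorem fails. This is where the real difficulty lies, and the paper resolves it not by genericity but by a parity input: the Dokchitser--Dokchitser result (Appendix A) that a positive proportion of locally soluble curves have even $2$-Selmer rank. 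For those curves, $\Sel_2(J)\supseteq\{0,W[2]\}$ forces $\#\Sel_2(J)\geq 4$, hence $\#\Sel_2(J^1)\in\{0\}\cup[4,\infty)$; combined with the average of $\#\Sel_2(J^1)$ being exactly $2$, this forces $\Sel_2(J^1)=\varnothing$ --- and hence $J^1(\Q)=\varnothing$ --- for a positive proportion. Your outline is missing both the reformulation in terms of the Selmer set of the torsor and this parity ingredient.
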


To prove Theorems \ref{count2} and \ref{count3}, we show that for a positive proportion of locally
soluble hyperelliptic curves $C$ over $\Q$, the varieties $J$ and $J^1$ are not
isomorphic over $\Q$. To distinguish these varieties, which become isomorphic over
$\overline{\Q}$, we will study their arithmetic fundamental groups. In fact, we need only
the quotient of the arithmetic fundamental group given by two-covers.

Let $I$ be a principal homogeneous space for the abelian variety $J$. A {\it two-cover}
of $I$ is, by definition, an unramified covering $\pi: Y \rightarrow I$ by another
principal homogeneous space $Y$ for $J$ with the property that
$$\pi(y+a) = \pi(y) + 2a$$
for any $y \in Y$ and $a \in J$. The degree of any two-cover is $2^{2g}$.

The simplest example of a two-cover of $J$ is given by the multiplication-by-$2$ isogeny $J \xrightarrow{2} J$.
Another interesting two-cover of $J$ is $J^1 \xrightarrow{2} J^2 \cong J$, where the first map is
multiplication by $2$ in $\Pic_{C/\Q}$ and $J^2$ is identified with $J$ by translation by the
hyperelliptic class~$d$ of degree $2$. If $\pi: Y \rightarrow J$ is any two-cover of $J$,
then the fiber over the origin gives a principal homogeneous space $Y[2]$ for the $2$-torsion
subgroup $J[2]$, and the class of this homogeneous space in the Galois cohomology group
$H^1(\Q,J[2])$ determines the isomorphism class of the two-cover $\pi$.

The two-covers $\pi:Y \rightarrow J$ where $Y$ has points over $\Q_\nu$ for all places $\nu$
are called {\it locally soluble}. They correspond to elements in the $2$-Selmer subgroup $\Sel_2(J)$
of $H^1(\Q,J[2])$. The $2$-Selmer group is finite, and lies in an exact sequence
$$0 \rightarrow J(\Q)/2J(\Q) \rightarrow \Sel_2(J) \rightarrow \Sha_J[2] \rightarrow 0.$$
The isogeny $J \xrightarrow{2} J$ corresponds to the trivial class in the Selmer group, and the
two-cover $J^1 \xrightarrow{2} J^2 \cong J$ gives a class $W[2]$ in the Selmer group whenever
$C$ (and hence $J^1$) is locally soluble. This class turns out to be non-trivial $100\%$ of the
time, as points of $W[2]$ correspond to Weierstrass divisors $e$ of degree $1$ on $C$ with
$2e \equiv d$. These divisors $e$ correspond to odd factorizations of $f(x,y)$ over $\Q$. An {\it odd} (resp.\ {\it even}) {\it factorization} of $f(x,y)$ over $\Q$ is a factorization of the form $f(x,y)=g(x,y)h(x,y)$ where $g,h$ are odd (resp.\ even) degree binary forms that are either defined over $\Q$ or are conjugate over some quadratic extension of~$\Q$. By Hilbert's irreducibility theorem, such factorizations rarely exist. The class $W[2]$ maps to the trivial class in $\Sha_J[2]$ if and only if $J^1$ has a rational point. Hence as an immediate corollary of Theorem \ref{count3}, we obtain:

\begin{corollary}
Fix any $g\geq1$. Then a positive proportion of locally soluble hyperelliptic curves over~$\Q$ of genus~$g$ have nontrivial $2$-torsion in the Tate-Shafarevich groups of their Jacobians.
\end{corollary}

\begin{remark} \emph{Another consequence of the fact that odd and even factorizations of a binary form $f(x,y)$ over $\Q$ rarely exist is that for 100\% of all locally soluble hyperelliptic curves $C$ over $\Q$, the set $J^1(\Q)$ is either empty or infinite. Indeed, if $J^1$ has a rational point, then the class of $W[2]$ in $H^1(\Q,J[2])$ lies in the image of the group $J(\Q)/2J(\Q)$. If $f(x,y)$ has no odd or even factorization over $\Q$, then $W[2]$ is nontrivial and $J(\Q)[2]=0$. Therefore, $J(\Q)$ has positive rank and hence is infinite, and as a consequence $J^1(\Q)$ is infinite.}
\end{remark}

Similarly, we define the $2$-{\it Selmer set} $\Sel_2(J^1)$ of $J^1$ as the set of isomorphism
classes of locally soluble two-covers $\pi:Y \rightarrow J^1$. This finite set is either empty
or forms a principal homogeneous space for the finite group $\Sel_2(J)$. In fact, $\Sel_2(J^1)$ is the
set of all elements in the $4$-Selmer group $\Sel_4(J)$ which map to the class of $W[2]$ in $\Sel_2(J)$
in the first descent.

When the set $\Sel_2(J^1)$ is empty, the varieties $J^1$ and $J$ are
non-isomorphic, and distinguished by their two-covers. We will prove:

\begin{theorem}\label{counttwocover}
Fix any $g\geq 1$. For a positive proportion of locally soluble hyperelliptic curves $C$ over~$\Q$, the $2$-Selmer set $\Sel_2(J^1)$ is empty.
\end{theorem}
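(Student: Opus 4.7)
The plan is to establish Theorem~\ref{counttwocover} by reducing it to a strict inequality between two averages. Since $\Sel_2(J^1)$, when non-empty, is a principal homogeneous space for the finite abelian group $\Sel_2(J)$, we have $|\Sel_2(J^1)|\in\{0,|\Sel_2(J)|\}$, so averaging over locally soluble hyperelliptic curves $C$ of genus $g$ ordered by height yields the automatic inequality $\mathrm{Avg}\,|\Sel_2(J^1)|\le\mathrm{Avg}\,|\Sel_2(J)|$. A strict inequality here is equivalent to the assertion that a positive proportion of such $C$ have $\Sel_2(J^1)=\emptyset$, so it suffices to compute (or appropriately bound) both averages and show that the inequality is strict.

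To make both averages accessible, I would realise elements of $\Sel_2(J^1)$ as integral orbits of an arithmetic group on a fixed representation, in the spirit of the title of the paper. Given $C:z^2=f(x,y)$, one writes $f(x,y)=\det(xA-yB)$ for a pair of symmetric $(2g+2)\times(2g+2)$ matrices $(A,B)$; classically, the Fano variety of $(g-1)$-planes on the base locus $\{Q_A=Q_B=0\}\subset\P^{2g+1}$ is a torsor under $J[2]$, and its class in $H^1(\Q,J)$ is essentially that of $J^1$. I would use this to identify $\Sel_2(J^1)$ with the set of locally soluble $\SO(A)(\Q)$-orbits on pencils $B$ with $\det(xA-yB)=f(x,y)$, modulo a distinguished orbit corresponding to the identity of $\Sel_2(J)$; then fix a $\Z$-model $V(\Z)$ and an arithmetic group $G(\Z)$ acting on it.

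The counting step would then follow the now-standard Bhargava geometry-of-numbers framework: average the number of $V(\Z)$-points in $g\cdot R_X$ as $g$ ranges over a compact subset of a Siegel fundamental domain for $G(\Z)\backslash G(\R)$, and bound the cuspidal contribution by showing it consists of reducible or distinguished orbits. An Ekedahl-type uniformity sieve then restricts the count to orbits locally soluble at every place, yielding an explicit formula for the average of $|\Sel_2(J^1)|$ over locally soluble $C$. A parallel but simpler computation (or an appeal to prior work in this program) gives the average of $|\Sel_2(J)|$, and a strict comparison of the two averages completes the proof.

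The main obstacle will be the algebraic parameterization: precisely matching locally soluble rational orbits on pencils of quadrics with non-identity classes of $\Sel_2(J^1)$, and constructing integral orbit representatives at every prime, in particular at primes dividing $\disc(f)$. This requires an understanding of $J[2]$ as a Galois module (realised via a Weil-restricted $\mu_2$ subject to a norm condition) that is delicate enough to recover the integral structure and the local obstructions. Once the parameterization is in place over $\Z_v$ for every $v$, the geometry-of-numbers count and the strict comparison of averages should proceed along now-familiar lines.
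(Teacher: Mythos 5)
Your reduction of the theorem to the strict inequality $\mathrm{Avg}\,|\Sel_2(J^1)| < \mathrm{Avg}\,|\Sel_2(J)|$ has two serious problems. First, the implication you need goes the wrong way: since $|\Sel_2(J)|$ is unbounded over the family, a strict inequality between the two averages could in principle be produced entirely by a density-zero set of curves with empty $\Sel_2(J^1)$ and large $\Sel_2(J)$; so strictness of the inequality of averages does not imply that a positive proportion of curves have $\Sel_2(J^1)=\emptyset$. Second, and more fundamentally, the average of $|\Sel_2(J)|$ over \emph{all} locally soluble hyperelliptic curves of genus $g$ (no marked rational point, ordered by the height of $f$) is not ``a parallel but simpler computation'': the known orbit parametrizations of $\Sel_2(J)$ (Bhargava--Gross for a rational Weierstrass point, Shankar--Wang for two marked rational points at infinity) apply only to density-zero subfamilies, and no representation is available whose rational orbits capture $\Sel_2(J)$ for the full family. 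The paper never computes this average and does not need it.

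The paper's actual deduction of Theorem~\ref{counttwocover} from Theorem~\ref{count} rests on two structural inputs that your proposal is missing. (i) When $\Sel_2(J^1)\neq\emptyset$ it is a torsor under $\Sel_2(J)$, and for $100\%$ of locally soluble curves the class $W[2]$ (whose rational points correspond to odd factorizations of $f$, which rarely exist) is a nontrivial element of $\Sel_2(J)$; hence $|\Sel_2(J^1)|\geq 2$ whenever it is nonempty. (ii) By the theorem of Dokchitser and Dokchitser in Appendix~A (proved via Brauer relations and regulator constants in biquadratic extensions), a positive proportion $\delta$ of these curves have \emph{even} $2$-Selmer rank, and for such a curve a nonempty $\Sel_2(J^1)$ has size at least $4$. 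If the set of curves with empty $\Sel_2(J^1)$ had density zero, the average of $|\Sel_2(J^1)|$ would therefore be at least $2+2\delta>2$, contradicting Theorem~\ref{count}. Your middle paragraphs do correctly describe the machinery behind Theorem~\ref{count} itself (though note that in the even case $n=2g+2$ the Fano variety of common isotropic $g$-planes is a torsor under $J$, not $J[2]$, and the paper works with $(\SL_n/\mu_2)$-orbits on pairs $(A,B)$ rather than $\SO(A)$-orbits on $B$), but the parity input (i)--(ii) is the essential missing idea, and no amount of refinement of the counting step will substitute for it.
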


\begin{theorem}\label{count}
Fix any $g\geq 1$.  When all locally~soluble hyperelliptic curves $C$ over~$\Q$ of genus $g$ are ordered by height, the average size of the $2$-Selmer set $\Sel_2(J^1)$ is at most~$2$.
\end{theorem}
\noindent
We expect that the average in Theorem \ref{count} is in fact equal to $2$, and thus is independent of $g$. To prove Theorem \ref{count}, we will use the theory of pencils of quadrics to
construct and count the locally soluble two-covers of $J^1$.

Our methods also allow us to count elements, on average, in more
general 2-Selmer sets. For~$C$ a hyperelliptic curve over~$\Q$ having
hyperelliptic class $d$, and $k>0$ any odd integer, define the {\it
  $2$-Selmer set of order $k$} for $C$ to be the subset of elements of
$\Sel_2(J^1)$ that locally come from $\Q_\nu$-rational points on $J^1$
of the form $e_\nu-\frac{k-1}{2}d$, where $e_\nu$ is an effective
divisor of odd degree $k$ on $C$ over~$\Q_\nu$, for all places $\nu$.
Then we show:

\begin{theorem}\label{countk}
  Fix any odd integer $k>0$. Then the average size of the $2$-Selmer
  set of order $k$, over all locally soluble hyperelliptic curves of
  genus $g$ over~$\Q$, is strictly less than $2$ provided that $k <
  g$, and tends to $0$ as $g\to\infty$.
\end{theorem}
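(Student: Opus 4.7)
The plan is to refine the proof of Theorem~\ref{count} by strengthening the local conditions at every place. Recall that in that proof, elements of $\Sel_2(J^1)$ for a fixed curve~$C$ are parametrized by certain integral orbits of a representation $V$ attached to $f$ (pencils of quadrics), and the average value~$2$ emerges by combining a geometry-of-numbers orbit count over $V(\Z)$ with a product of local mass densities $\mu_\nu$ enforcing local solubility of the two-cover. The $2$-Selmer set of order~$k$ is cut out of $\Sel_2(J^1)$ by the stronger requirement that, at every place~$\nu$, the class come from a $\Q_\nu$-point of $J^1$ of the form $e_\nu-\tfrac{k-1}{2}d$ with $e_\nu$ an effective divisor of degree~$k$. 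I would identify, place by place, which $V(\Q_\nu)$-orbits satisfy this refined condition, and rerun the adelic mass computation with each $\mu_\nu$ replaced by a smaller mass $\mu_\nu^{(k)}\le\mu_\nu$.

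The key geometric input is that the Abel--Jacobi map
\[
u_k\colon \Sym^k(C) \longrightarrow J^1
\]
has image of dimension $k$. For $k<g=\dim J^1$ its image is a proper subvariety, and consequently at some places $\nu$ the image of $\Sym^k(C)(\Q_\nu)$ in $J^1(\Q_\nu)/2J(\Q_\nu) \hookrightarrow H^1(\Q_\nu,J[2])$ is properly contained in the image of the full $J^1(\Q_\nu)$. At any such place $\mu_\nu^{(k)}<\mu_\nu$ strictly, so $\prod_\nu \mu_\nu^{(k)} < \prod_\nu \mu_\nu$, and the average size of the $2$-Selmer set of order~$k$ is strictly below~$2$.

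For the stronger statement that the average tends to $0$ as $g\to\infty$ with $k$ fixed, I would focus on the archimedean place. The group $J(\R)/2J(\R)$ can have size as large as $2^g$ (depending on the number of real ovals of $C(\R)$), while the image of $\Sym^k(C)(\R)$ in $J(\R)/2J(\R)$ is contained in only $O_k(1)$ cosets. Hence on a positive proportion of curves the archimedean ratio $\mu_\infty^{(k)}/\mu_\infty$ decays like $2^{-(g-k)}$ up to bounded factors, giving an upper bound on the average of the shape $C_k\cdot 2^{-(g-k)}$, which tends to zero.

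The hard part will be carrying these local restrictions through the geometry-of-numbers sieve uniformly in~$g$. The $V(\Z)$-orbit count used to prove Theorem~\ref{count} has error terms whose dependence on $g$ and on the refined archimedean region must be controlled, and one needs an upper-bound orbit count on the smaller archimedean region strong enough to dominate a main term of size as small as $2^{-g}$. In addition, the identification of those $V(\Q_\nu)$-orbits that arise from effective degree-$k$ divisors on $C_{\Q_\nu}$ (rather than arbitrary $\Q_\nu$-points on two-covers of $J^1$) must be made explicit enough to compute $\mu_\nu^{(k)}$. Once this uniform control and explicit local description are in place, the local computation described above yields the theorem.
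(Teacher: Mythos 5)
Your overall framework --- treating the order-$k$ condition as a further local sieve on top of the machinery of Theorem~\ref{count}, with local factors $\varphi_\nu\le 1$, and isolating the archimedean place --- is exactly the paper's strategy. But the two steps that actually carry the proof are not right as stated. First, for the bound ``$<2$'' you argue that since $\Sym^k(C)\to J^1$ has $k$-dimensional image, a proper subvariety when $k<g$, ``consequently'' the image of $\Sym^k(C)(\Q_\nu)$ in $J^1(\Q_\nu)/2J(\Q_\nu)$ is a proper subset at some place. This is a non sequitur: a positive-dimensional proper subvariety can perfectly well surject onto a finite quotient (at a non-archimedean place $J^1(\Q_p)/2J(\Q_p)$ is small and the image of $C(\Q_p)$ alone often fills it). The paper instead restricts to the positive-density set of curves whose form $f$ splits completely over $\R$, shows that for a conjugate pair of complex points the class $(P)+(P^\sigma)-d$ lies in $2J(\R)$ (a norm computation, $N_{\C/\R}(t-\beta)\in\R^{\times2}$), so that the image of $\Sym^k(C)(\R)$ equals that of $\Sym^k(C(\R))$, and then bounds the latter by the combinatorial quantity $S_m(k)=\binom{m}{1}+\binom{m}{3}+\cdots+\binom{m}{k}$ using the $m$ connected components of $C(\R)$ and a real Weierstrass point; the inequality $S_{g+1}(k)<2^g=|J^1(\R)/2J(\R)|$ for odd $k<g$ is what gives $\varphi_\infty<1$. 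Some such explicit archimedean computation is needed; the dimension count does not substitute for it.

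Second, for the statement that the average tends to $0$, you deduce a bound of shape $C_k\cdot 2^{-(g-k)}$ from the fact that the archimedean ratio is small ``on a positive proportion of curves.'' That inference fails: if the ratio is small only on a positive proportion, the remaining curves can each contribute close to $2$ and the average stays bounded away from $0$. What is needed is that the number $2m$ of real roots tends to infinity for $100\%$ of curves, so that $S_m(k)/2^{m-1}\to 0$ on almost the whole family; the paper gets this from the Dembo--Poonen--Shao--Zeitouni theorem that the density of degree-$n$ real polynomials with fewer than $\log n/\log\log n$ real roots is $O(n^{-b+o(1)})$. (Note also that the typical decay is governed by $m$ of size roughly $\log n$, not by $m=g+1$, so the true rate is polynomial in $g$ rather than $2^{-(g-k)}$.) On the other hand, your worry about uniformity in $g$ of the geometry-of-numbers error terms is unnecessary: the average is computed for each fixed $g$ separately, and only the resulting sequence of averages needs to tend to $0$.
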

Theorem~\ref{countk} implies that most hyperelliptic curves of large genus have no $K$-rational points over all extensions $K$ of $\Q$ having small odd degree:

\begin{corollary}\label{corcountk}
Fix any $m>0$. Then as $g\to\infty$, a proportion approaching $100\%$ of hyperelliptic curves~$C$ of genus $g$ over~$\Q$ contain no points over all extensions of $\Q$ of odd degree $\leq m$.
\end{corollary}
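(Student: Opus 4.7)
The plan is, for each fixed odd $k$ with $1\le k\le m$, to show that a proportion approaching $100\%$ of hyperelliptic curves $C$ of genus $g$ have no point over any extension of $\Q$ of degree $k$; since there are finitely many odd $k\le m$, a union bound then yields the corollary.

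The key observation is that any odd-degree rational point on $C$ produces a nonempty $2$-Selmer set of order $k$. Given $P\in C(K)$ with $[K:\Q]=k$ odd, the effective $\Q$-rational divisor $e_P:=\sum_\sigma P^\sigma$ (summed over embeddings $\sigma\colon K\hookrightarrow\bar\Q$) has degree $k$, so $[e_P]-\tfrac{k-1}{2}d\in J^1(\Q)$. Translating by this point identifies $J^1\iso J$, and the pullback of $[2]\colon J\to J$ is a soluble (hence locally soluble) two-cover of $J^1$, defining a class in $\Sel_2(J^1)$. At each place $\nu$, this class is witnessed by the $\Q_\nu$-rational point of $J^1$ represented by $e_P\otimes\Q_\nu-\tfrac{k-1}{2}d$, where $e_P\otimes\Q_\nu$ is an effective divisor of degree $k$ on $C_{\Q_\nu}$; hence the class lies in the $2$-Selmer set of order $k$, which is therefore nonempty. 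In particular, such a $C$ automatically has $C(\R)\neq\emptyset$ (any odd-degree $K$ has a real embedding) and $J^1$ locally soluble at every place of $\Q$.

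By Theorem~\ref{countk} and Markov's inequality, the proportion of \emph{locally soluble} $C$ of genus $g$ with nonempty $2$-Selmer set of order $k$ tends to $0$ as $g\to\infty$. To pass from locally soluble $C$ to all $C$, I would use that the proof of Theorem~\ref{countk} carries over without change to the wider class of curves for which $J^1$ (rather than $C$) is locally soluble---the local conditions defining the $2$-Selmer set of order $k$ at each place $\nu$ require only a $\Q_\nu$-rational effective divisor of degree $k$ on $C$, not a $\Q_\nu$-rational point. The construction above ensures that every $C$ with an odd-degree rational point is in this wider class, so the same Markov bound applies, and the proportion of all $C$ of genus $g$ possessing a $K$-point for some $K/\Q$ of degree $k$ also tends to $0$. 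The union bound over odd $k\le m$ completes the argument.

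The main obstacle I anticipate is the verification of this extension of Theorem~\ref{countk} to the wider class of $C$ with $J^1$ locally soluble. If it does not follow by direct inspection of the proof, an alternative approach would be to combine the locally soluble bound from Theorem~\ref{countk} with a density estimate showing that a proportion approaching $100\%$ of hyperelliptic curves of genus $g$ are themselves locally soluble as $g\to\infty$---a limiting refinement of the results of~\cite{PS2}, reflecting that for large $g$ the equation $z^2=f(x,y)$ is almost surely soluble over $\R$ and over each $\Q_p$.
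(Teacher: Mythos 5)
Your proof is correct and is essentially the paper's intended argument: the paper deduces the corollary from Theorem~\ref{countk} in one line, precisely via your key observation that a point over an odd-degree extension of degree $k\le m$ produces an element of the $2$-Selmer set of order $k$, followed by Markov and a union bound over the finitely many odd $k\le m$. The one obstacle you flag is already resolved in the paper: Section~\ref{mainproofs} establishes Theorem~\ref{countk} for any large family of curves with locally soluble $\Div^1(C)$ (the correct wider class is ``$\Div^1$ locally soluble'' rather than ``$J^1$ locally soluble''), and any curve with a point over an odd-degree extension has $\Div^1(C)(\Q)\neq\varnothing$, hence lies in that family.
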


Corollary \ref{corcountk} allows us to construct many smooth surfaces and varieties of higher degree, as symmetric powers of hyperelliptic curves, that fail the Hasse principle:
\begin{corollary}\label{hasse}
Fix any odd integer $k>0$.  Then as $g\to\infty$, the variety $\Sym^k(C)$ fails the Hasse principle for a proportion approaching $100\%$ of locally soluble hyperelliptic curves $C$ over~$\Q$ of genus~$g$.
\end{corollary}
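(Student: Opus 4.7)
The plan is to derive Corollary~\ref{hasse} directly from Corollary~\ref{corcountk} via a short rationality argument, together with the observation that local solubility passes from $C$ to $\Sym^k(C)$. For the local side: a $\Q_\nu$-point of $\Sym^k(C)$ is exactly an effective $\Q_\nu$-rational divisor of degree $k$ on $C$, so given any $\Q_\nu$-point $P_\nu \in C(\Q_\nu)$ the divisor $k\cdot P_\nu$ shows $\Sym^k(C)(\Q_\nu)\neq\emptyset$. Hence whenever $C$ is locally soluble, so is $\Sym^k(C)$, and failure of the Hasse principle for $\Sym^k(C)$ reduces to the vanishing of $\Sym^k(C)(\Q)$.

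Next I would use a parity trick to relate $\Sym^k(C)(\Q)$ to odd-degree points on $C$. A $\Q$-rational effective divisor of degree $k$ on $C$ can be written uniquely as $\sum n_i P_i$, where the $P_i$ are distinct closed points of $C_{/\Q}$, the $n_i$ are positive integers, and the residue degrees $d_i=[\kappa(P_i):\Q]$ satisfy $\sum n_i d_i = k$. Since $k$ is odd, at least one $d_i$ must be odd, and clearly $d_i\le k$. Thus a $\Q$-point of $\Sym^k(C)$ forces a closed point of $C$ of odd degree at most $k$, equivalently a point of $C$ over an odd-degree extension of $\Q$ of degree $\le k$. Consequently, if $C$ has no points over any odd-degree extension of $\Q$ of degree at most $k$, then $\Sym^k(C)(\Q) = \emptyset$ and $\Sym^k(C)$ fails the Hasse principle.

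It then remains to apply Corollary~\ref{corcountk} with $m=k$: as $g\to\infty$, a proportion approaching $100\%$ of \emph{all} hyperelliptic curves $C$ of genus $g$ have no points over any extension of $\Q$ of odd degree at most $k$. To pass from ``all'' to ``locally soluble'', I would use that the proportion of locally soluble hyperelliptic curves of genus $g$ is bounded below by a positive constant independent of $g$ (see \cite{PS2}). If $P$ denotes the property ``no odd-degree points of degree $\le k$'' and $L$ denotes local solubility, then the proportion of curves failing $P$ is $o(1)$ while the proportion satisfying $L$ is $\ge \alpha>0$, so the proportion of curves in $L$ satisfying $P$ is at least $1 - o(1)/\alpha \to 1$, yielding Corollary~\ref{hasse}.

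The argument is essentially a packaging exercise once Corollary~\ref{corcountk} is in hand, so the only real points of care are the two passages above: verifying that $\Sym^k(C)$ inherits local solubility from $C$ (immediate from the $k\cdot P_\nu$ construction) and checking that restricting from ``all curves'' to ``locally soluble curves'' does not destroy the $100\%$ density, which uses only a uniform lower bound on the density of locally soluble curves. I do not anticipate a major obstacle beyond confirming this uniformity in $g$.
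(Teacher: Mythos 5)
Your argument is correct and matches the paper's intended deduction: Corollary~\ref{hasse} is obtained from Corollary~\ref{corcountk} together with exactly the two observations you make, namely that $\Sym^k(C)$ inherits local solubility from $C$ via the divisor $k\cdot P_\nu$, and that an effective $\Q$-rational divisor of odd degree $k$ decomposes into closed points of which at least one has odd degree $\le k$. The only available streamlining is that Theorem~\ref{countk} is already stated for locally soluble curves (emptiness of the $2$-Selmer set of order $k'$ for each odd $k'\le k$ directly forces $\Sym^{k}(C)(\Q)=\varnothing$), which would let you avoid the density transfer from all curves back to locally soluble ones; but your transfer, resting on the uniform-in-$g$ positive lower bound for the density of locally soluble curves from \cite{PS2}, is also valid.
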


One may ask what is the obstruction to the Hasse principle for the
varieties $J^1$ and $\Sym^k(C)$ occurring in
Theorem~\ref{count3} and Corollary~\ref{hasse}, respectively. In both cases, the obstruction arises from the non-existence of a locally soluble two-cover of~$J^1$.
As shown by Skorobogatov~\cite[Theorem~6.1.1]{Sk} (see also Stoll~\cite[Remark~6.5 \& Theorem~7.1]{StoInv}), using the descent theory of Colliot-Th\'el\`ene and Sansuc~\cite{CS}, this obstruction yields a case of the Brauer-Manin obstruction for both $J^1$ and $\Sym^k(C)$.
Therefore, we obtain:

\begin{theorem}\label{bm}
Fix any $g\geq 1$. For a positive proportion of locally soluble
hyperelliptic curves $C$ over~$\Q$ of genus $g$, the variety $J^1$ of
dimension $g$ has a Brauer--Manin obstruction to having a rational point.
\end{theorem}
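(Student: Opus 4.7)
The plan is to deduce Theorem \ref{bm} as an essentially immediate consequence of Theorem \ref{counttwocover} together with the descent-theoretic results of Skorobogatov and Stoll already cited in the introduction. By Theorem \ref{counttwocover}, for a positive proportion of locally soluble hyperelliptic curves $C$ over $\Q$ of genus $g$, the $2$-Selmer set $\Sel_2(J^1)$ is empty. I will show that every such $C$ supplies a $J^1$ that is locally soluble yet carries a Brauer--Manin obstruction to possessing a $\Q$-rational point.

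First I would verify that local solubility of $C$ forces $J^1(\A_\Q) \neq \emptyset$: at every place $\nu$ of $\Q$, a point $P_\nu \in C(\Q_\nu)$ yields a divisor class $e_{P_\nu} \in J^1(\Q_\nu)$. Hence whenever $\Sel_2(J^1)$ is empty, the emptiness reflects not the absence of adelic points on $J^1$ but rather a genuine obstruction to lifting them to a locally soluble two-cover of $J^1$, since by definition $\Sel_2(J^1)$ parametrizes isomorphism classes of such two-covers. I would then invoke Skorobogatov \cite[Theorem~6.1.1]{Sk} together with Stoll \cite[Remark~6.5 \& Theorem~7.1]{StoInv} (using the descent formalism of Colliot-Th\'el\`ene and Sansuc \cite{CS}), which shows that the $2$-descent obstruction attached to the $[2]$-isogeny $[2]\colon J \to J$, viewed as a two-cover of the principal homogeneous space $J^1$, is a particular instance of the Brauer--Manin obstruction. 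Emptiness of $\Sel_2(J^1)$ therefore forces $J^1(\A_\Q)^{\Br} = \emptyset$, and combined with local solubility of $J^1$ this is precisely a Brauer--Manin obstruction to a $\Q$-rational point.

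The substance of the argument is therefore entirely contained in Theorem \ref{counttwocover}; the Brauer--Manin upgrade is a black-box application of existing descent machinery, and I do not anticipate any serious obstacle. The one step that warrants mild care is unwinding the dictionary between elements of $\Sel_2(J^1)$ and the torsors under $J[2]$ treated in \cite{Sk,StoInv}, so as to confirm that non-existence of a locally soluble two-cover of $J^1$ matches exactly the hypothesis there required to conclude that the Brauer set $J^1(\A_\Q)^{\Br}$ is empty; this verification is routine in the hyperelliptic setting, where $J[2]$ is a finite \'etale $\Q$-group scheme and the $[2]$-torsor structure on $J^1$ is classical.
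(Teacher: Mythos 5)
Your proposal is correct and follows essentially the same route as the paper: the authors likewise deduce Theorem~\ref{bm} directly from Theorem~\ref{counttwocover} by observing that emptiness of $\Sel_2(J^1)$ means no locally soluble two-cover of $J^1$ exists, and then citing Skorobogatov~\cite[Theorem~6.1.1]{Sk} and Stoll~\cite[Remark~6.5 \& Theorem~7.1]{StoInv} (via the descent theory of \cite{CS}) to convert this descent obstruction into a Brauer--Manin obstruction. Your additional care in checking that local solubility of $C$ gives adelic points on $J^1$ is exactly the implicit step the paper relies on, so there is nothing missing.
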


\begin{theorem}\label{bm2}
Fix any odd integer $k>0$. As $g\to\infty$, for a density approaching $100\%$ of locally soluble
hyperelliptic curves $C$ over~$\Q$ of genus $g$, the variety $\Sym^k(C)$ of
dimension $k$ has a Brauer--Manin obstruction to having a rational point.
\end{theorem}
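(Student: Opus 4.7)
The plan is to combine Corollary~\ref{corcountk} (equivalently, Theorem~\ref{countk} in the regime $g>k$) with a descent-theoretic identification of the failure of the Hasse principle as a Brauer--Manin obstruction. Fix an odd integer $k>0$. I would begin by restricting attention to those locally soluble hyperelliptic curves $C$ of genus $g$ for which the $2$-Selmer set of order~$k$ is empty; by Theorem~\ref{countk}, these comprise a proportion of locally soluble $C$ approaching $100\%$ as $g\to\infty$.

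Next, I would verify that for such $C$ the variety $\Sym^k(C)$ is locally soluble yet has no $\Q$-rational point. Local solubility is immediate: any $\Q_\nu$-rational point on $C$ produces a $\Q_\nu$-rational effective divisor of degree $k$ on $C$, hence a local point on $\Sym^k(C)$. For the absence of $\Q$-points, any $P\in\Sym^k(C)(\Q)$ would correspond to an effective divisor~$e$ of odd degree $k$, yielding a class $e-\tfrac{k-1}{2}d\in J^1(\Q)$; pulling back the multiplication-by-$2$ isogeny on $J$ along translation by this class would produce a soluble two-cover of $J^1$ whose class lies in the $2$-Selmer set of order~$k$, contradicting its emptiness.

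The central step is to upgrade this failure of the Hasse principle on $\Sym^k(C)$ to a Brauer--Manin obstruction. For this I would invoke the descent theory of Colliot-Th\'el\`ene--Sansuc~\cite{CS}, in the form developed by Skorobogatov~\cite{Sk} (Theorem~6.1.1) and Stoll~\cite{StoInv} (Remark~6.5 and Theorem~7.1). Concretely, the Abel--Jacobi-type morphism $\sigma_k\colon\Sym^k(C)\to J^1$ sending $e\mapsto e-\tfrac{k-1}{2}d$ pulls back each two-cover $\pi\colon Y\to J^1$ to an \'etale $J[2]$-torsor $\sigma_k^*Y\to\Sym^k(C)$. Emptiness of the $2$-Selmer set of order~$k$ means precisely that no such pullback twist is everywhere locally soluble, and the cited theorems then translate this descent obstruction into a Brauer--Manin obstruction on $\Sym^k(C)$.

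The main obstacle I anticipate is this final step: checking that the Skorobogatov--Stoll formalism applies with $\Sym^k(C)$ rather than $J^1$ as ambient variety. One must confirm that the pullback $J[2]$-torsors over $\Sym^k(C)$ fall within the class of torsors handled by their descent theorem, and that every everywhere-locally-soluble $J[2]$-twist on $\Sym^k(C)$ arising in the descent in fact corresponds to an element of the $2$-Selmer set of $J^1$ of order~$k$. Once these compatibilities are in place, the density assertion of Theorem~\ref{bm2} is immediate from Corollary~\ref{corcountk}.
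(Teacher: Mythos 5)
Your proposal is correct and follows essentially the same route as the paper: reduce to the emptiness of the $2$-Selmer set of order $k$ for a proportion of locally soluble curves tending to $100\%$ (via Theorem~\ref{countk}), and then invoke the descent theory of Colliot-Th\'el\`ene--Sansuc in the form of Skorobogatov's Theorem~6.1.1 and Stoll's Remark~6.5 and Theorem~7.1 to convert the resulting failure of the Hasse principle on $\Sym^k(C)$ into a Brauer--Manin obstruction. The paper treats the final descent-theoretic step as a direct citation rather than spelling out the compatibility of the pulled-back $J[2]$-torsors along $\Sym^k(C)\to J^1$, which is exactly the point you flag; your verification that emptiness of the order-$k$ Selmer set rules out every everywhere-locally-soluble twist is the content implicitly being used.
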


Recall that the index $I(C)$ of a curve $C/\Q$ is the least positive degree of a $\Q$-rational divisor $D$ on $C$. Equivalently, it is the greatest common divisor of all degrees $[K : \Q]$ of finite field
extensions $K/\Q$ such that $C$ has a $K$-rational point.  Then Theorems~\ref{count2} and \ref{count3} are also equivalent to:

\begin{theorem}\label{index}
For any $g\geq 1$, a positive proportion of locally soluble hyperelliptic curves $C$ of genus $g$ over~$\Q$ have index $2$.
\end{theorem}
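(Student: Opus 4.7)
The plan is to derive Theorem~\ref{index} as an immediate corollary of Theorem~\ref{count2}, via a short reformulation of the index. First I would observe that for any hyperelliptic curve $C/\Q$ in our family the index $I(C)$ divides $2$: the structural degree-$2$ map $C\to\P^1$ pulls back any $\Q$-rational divisor of degree $1$ on $\P^1$ (for instance the class of $\infty$) to a $\Q$-rational divisor of degree $2$ on $C$. Hence $I(C)\in\{1,2\}$ for every curve to be counted.

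Next I would establish the dichotomy: $I(C)=1$ if and only if $C$ has a rational point over some extension of $\Q$ of odd degree. Using that $I(C)$ equals the gcd of the degrees of all closed points of $C$, one direction is immediate, since a closed point of odd degree $d$ combined with the degree-$2$ divisor above forces $I(C)\mid\gcd(d,2)=1$. For the converse, if $I(C)=1$ then some $\Z$-linear combination $\sum n_i d_i$ of the degrees $d_i$ of closed points $P_i$ of $C$ equals $1$, hence is odd, so at least one $d_i$ must be odd; the residue field at such a $P_i$ is then an odd-degree extension of $\Q$ over which $C$ has a point. Equivalently, a $K$-valued point with $[K:\Q]$ odd always factors through a closed point of odd degree.

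Combining the two observations, for any locally soluble hyperelliptic curve $C$ we have $I(C)=2$ precisely when $C$ has no rational points over any odd-degree extension of $\Q$. Theorem~\ref{count2} asserts that this occurs for a positive proportion of locally soluble hyperelliptic curves of genus $g$, which is exactly the content of Theorem~\ref{index}. There is no serious obstacle in this reduction: the entire argument reduces to the elementary geometric remark that the hyperelliptic map supplies a canonical $\Q$-rational divisor of degree $2$, together with the standard gcd reformulation of the index. The hard work is already packaged in Theorem~\ref{count2}, and ultimately in the $2$-Selmer average Theorem~\ref{count}.
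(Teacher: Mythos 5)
Your proposal is correct and matches the paper's treatment: the paper states Theorem~\ref{index} as an immediate equivalent of Theorem~\ref{count2}, using exactly the two observations you make — the hyperelliptic map forces $I(C)\mid 2$, and the gcd characterization of the index shows $I(C)=1$ if and only if $C$ has a point over some odd-degree extension. No further argument is given or needed beyond Theorem~\ref{count2}.
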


We will actually prove more general versions of all of these results,
where for each $g\geq 1$ we range over {\it any} ``\fun'' congruence
family of hyperelliptic curves $C$ over $\Q$ of genus~$g$ for which $\Div^1(C)$ (but not necessarily $C$) is locally soluble;
see~Definition~\ref{def:fun} for the definition of ``\fun''.

We obtain Theorem~\ref{counttwocover} from Theorem~\ref{count} by combining it
with a result of Dokchitser and Dokchitser (see Appendix~A), which
states that a positive proportion of locally soluble hyperelliptic
curves over $\Q$ of genus $g \geq 1$ have even (or odd) $2$-Selmer
rank. Indeed, suppose that $C$ is a locally soluble hyperelliptic
curve whose $2$-Selmer set $\Sel_2(J^1)$ is nonempty. Then the
cardinality of $\Sel_2(J^1)$ is equal to the order of the finite
elementary abelian
$2$-group $\Sel_2(J)$. As we have shown earlier, for $100\%$ of locally soluble hyperelliptic curves, the group $\Sel_2(J)$ contains at least $2$ elements, namely the trivial class and the class $W[2]$. Hence the cardinality of $\Sel_2(J^1)$ is at least $2$. Moreover, if the
2-Selmer rank of the Jacobian is even, then the set $\Sel_2(J^1)$
(when nonempty) will have size at least~4. Therefore,
Theorem~\ref{count} (and Appendix~A) implies that for a positive
proportion of locally soluble hyperelliptic curves, the Selmer set
$\Sel_2(J^1)$ is empty. This proves Theorem~\ref{counttwocover}.

We prove Theorem~\ref{count} by relating the problem to a purely
algebraic one involving pencils of quadrics.  Let $A$ and $B$ be two
symmetric bilinear forms over $\Q$ in $n=2g+2$ variables, and assume
that the corresponding pencil of quadrics in $\P^{n-1}$ is
generic. Over the complex numbers, the Fano variety $F=F(A,B)$ of
common maximal isotropic subspaces of $A$ and $B$ is isomorphic to the
Jacobian $J$ of the hyperelliptic curve given by $C:z^2=\disc(Ax-By)
:= (-1)^{g+1}\det(Ax - By)$ (cf.~\cite{R}, \cite{D}, \cite{DR});
furthermore, all such pairs $(A,B)$ with the same discriminant binary form
are $\SL_n(\C)$-equivalent.

However, as shown in \cite{W}, over $\Q$ the situation is much different. Given $A$ and $B$, the Fano variety $F = F(A,B)$ might not have any rational points. In general, $F$ is a principal homogeneous space for $J$ whose class $[F]$ in $H^1(\Q,J)$ has order dividing~$4$ and satisfies $2[F]=[J^1]$; hence $F$ gives a two-cover of $J^1$ (see~\cite{W} or \S\ref{generic} for more details on the properties of the Fano variety). Moreover, given a hyperelliptic curve $C:z^2=f(x,y)$ over $\Q$ of genus $g$ (equivalently, a binary form of degree $n=2g+2$ over $\Q$ with nonzero discriminant), there might not exist {\it any} pair $(A,B)$ of symmetric bilinear forms over $\Q$
such that $f(x,y)=\disc(Ax-By)$! This raises the natural question: for which binary forms $f(x,y)$ of degree $n=2g+2$ and nonzero discriminant over $\Q$ does there exist a pair $(A,B)$ of symmetric bilinear forms in $n$ variables over $\Q$ such that $f(x,y)=\disc(Ax-By)$?

In this paper, we give a geometric answer to this question in terms of
the {generalized Jacobian}~$J_{\frak m}$ of the hyperelliptic curve
$C:z^2=f(x,y)$. Assume for simplicity that $f(x,y) = f_0x^n +
f_1x^{n-1}y + \cdots+f_n y^n$ has first coefficient $f_0 \neq 0$, so
that the curve $C$ has two distinct points $P$ and $P'$ above the
point $\infty = (1,0)$ on $\P^1$. These points are rational and
conjugate over the field $\Q(\sqrt{f_0})$. Let $\frak m = P + P'$ be
the corresponding modulus over~$\Q$ and let $C_{\frak m}$ denote the
singular curve associated to this modulus as in \cite[Ch.\ IV, \S4]{S4}. Then $C_{\frak m}$ is given by the equation $z^2=f(x,y)y^2$, and has an ordinary double point at infinity. The {\it generalized Jacobian} of $C$ associated to the modulus $\frak m$, denoted by $J_{\frak m}=J_{\frak m}(C)$, is the connected component of the identity of $\Pic_{C_{\frak m}/\Q}/\Z\cdot d$,
while $J^1_{\frak m}=J^1_{\frak m}(C)$ denotes the nonidentity component; here $d$ denotes the hyperelliptic class of $C_{\frak m}$ in $\Pic^2_{C_{\frak m}/\Q}(\Q)$
obtained by
pulling back $\mathcal O(1)$ from~$\P^1$.
We prove:

\begin{theorem}\label{genJ}
Let $f(x,y)$ denote a binary form of even degree $n=2g+2$ over $\Q$, with nonzero discriminant and nonzero first coefficient.  Then there exists a pair $(A,B)$ of symmetric bilinear forms over $\Q$ in $n$ variables satisfying $f(x,y)=\disc(Ax-By)$ if and only if there exists a two-cover of homogeneous spaces $F_{\frak m} \rightarrow J_{\frak m}^1$ for $J_{\frak m}$ over $\Q$, or equivalently, if and only if the class of the homogeneous space $J_{\frak m}^1$ is divisible by $2$ in the group $H^1(\Q ,J_{\frak m})$.
\end{theorem}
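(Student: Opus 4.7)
The equivalence between existence of a two-cover $F_{\mathfrak m}\to J^1_{\mathfrak m}$ and $2$-divisibility of $[J^1_{\mathfrak m}]$ in $H^1(\Q,J_{\mathfrak m})$ is tautological once one unpacks the definitions: a two-cover is a $J_{\mathfrak m}$-torsor $F_{\mathfrak m}$ together with a ``multiplication by $2$'' morphism to $J^1_{\mathfrak m}$, and such a datum is the same as a class $[F_{\mathfrak m}]\in H^1(\Q,J_{\mathfrak m})$ with $2[F_{\mathfrak m}]=[J^1_{\mathfrak m}]$. The substantive content of the theorem is therefore the equivalence between existence of $(A,B)$ over $\Q$ with $\disc(Ax-By)=f(x,y)$ and existence of such an $F_{\mathfrak m}$, which I plan to establish through a generalized Fano variety $F_{\mathfrak m}(A,B)$ refining the classical Fano variety of \cite{W}.

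\textbf{Forward direction.} Given $(A,B)$ over $\Q$, I would first invoke the construction of \cite{W} producing the Fano scheme $F(A,B)$ of common $(g+1)$-dimensional isotropic subspaces, which is a $J$-torsor equipped with a canonical two-cover map $F(A,B)\to J^1$. To lift this to a two-cover of $J^1_{\mathfrak m}$, I would use the hypothesis $f_0\neq 0$, which forces $\det A\neq 0$: the quadric $\{A=0\}\subset\P^{n-1}$ is then smooth, and its two rulings are defined over $\Q(\sqrt{f_0})$ and conjugate under $\Gal(\Q(\sqrt{f_0})/\Q)$. This is precisely the quadratic character cutting out the toric part $T$ in the extension $0\to T\to J_{\mathfrak m}\to J\to 0$. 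Recording, for each common isotropic $Y\in F(A,B)$, which ruling at $\infty$ contains~$Y$ produces a $T$-torsor over $F(A,B)$ whose total space is canonically a $J_{\mathfrak m}$-torsor $F_{\mathfrak m}(A,B)$ with a refined two-cover map to $J^1_{\mathfrak m}$.

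\textbf{Converse direction.} For the reverse implication, I would fix a pair $(A_0,B_0)$ over $\bar\Q$ with $\disc(A_0x-B_0y)=f(x,y)$---which exists since over $\bar\Q$ every separable binary form of even degree~$n$ is the discriminant of a pencil of quadrics, unique up to $\SL_n(\bar\Q)$-equivalence---and perform Galois descent. The crucial calculation is that the scheme-theoretic stabilizer of $(A_0,B_0)$ in $\SL_n$ is canonically isomorphic to $J_{\mathfrak m}[2]$ as a finite group scheme over $\Q$. Granting this, the variety $X$ of all such pairs is an $\SL_n$-form of $\SL_n/J_{\mathfrak m}[2]$; by Hilbert 90 ($H^1(\Q,\SL_n)=0$), $X$ has a $\Q$-point exactly when its classifying obstruction in $H^2(\Q,J_{\mathfrak m}[2])$ vanishes, and a direct cocycle comparison---using the forward $F_{\mathfrak m}$ construction on a $\bar\Q$-pair to exhibit the cocycle explicitly---identifies this obstruction with the boundary $\delta[J^1_{\mathfrak m}]\in H^2(\Q,J_{\mathfrak m}[2])$ coming from the Kummer sequence $0\to J_{\mathfrak m}[2]\to J_{\mathfrak m}\xrightarrow{[2]}J_{\mathfrak m}\to 0$, whose vanishing is precisely $2$-divisibility of $[J^1_{\mathfrak m}]$.

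\textbf{Main obstacle.} The hardest step will be the stabilizer computation together with the identification of the two obstruction classes, because it requires matching two a priori distinct models of $J_{\mathfrak m}[2]$: the intrinsic one via the singular-curve Picard group $\Pic(C_{\mathfrak m})$, and the one arising from pairs of rulings of the smooth members $Q_t$ of the pencil at the $n$ ramification values of $\pi:C\to\P^1$, supplemented by the ruling datum of $A$ itself at $\infty$. Producing a functorial isomorphism---rather than merely an abstract one---of these two descriptions is what makes the forward and converse constructions inverse to each other at the level of torsors.
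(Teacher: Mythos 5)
There is a genuine gap in each direction. In the forward direction, your construction of the $J_{\frak m}$-torsor does not work as described: an element $Y\in F(A,B)$ is a common isotropic subspace of dimension $g$ in the $(2g+2)$-dimensional space $W$, so it is \emph{not} maximal isotropic for the smooth quadric $Q_A$ — it lies on maximal isotropic subspaces of \emph{both} rulings (the quotient $Y^\perp/Y$ is a nondegenerate plane containing one isotropic line from each family), so ``which ruling at $\infty$ contains $Y$'' is not well-defined. Even setting that aside, a binary ruling datum would produce a $\mu_2$-cover of $F(A,B)$, hence something of dimension $g$; it cannot be a torsor under $J_{\frak m}$, which has dimension $g+1$ and whose kernel over $J$ is the one-dimensional torus $T=(\Res_{K'/K}\GG_m)_{N=1}$, not $\mu_2$. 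The paper's construction instead enlarges the pencil: it forms the regular pair $(A',B')$ on $W\oplus \Q^2$ with $\disc(xA'-yB')=f(x,y)y^2$ (adding a rank-one form to $A$ and a split rank-two form to $B$) and takes $F_{\frak m}$ to be the variety of $(g+1)$-planes isotropic for the whole pencil and avoiding the radical of $A'$; this genuinely raises the dimension by one and yields the $J_{\frak m}$-torsor with $2[F_{\frak m}]=[J^1_{\frak m}]$.

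In the converse direction you have correctly located the crux — identifying the descent obstruction $d_f\in H^2(\Q,J_{\frak m}[2])$ of the orbit variety with $\delta[J^1_{\frak m}]$ — but you only assert it as ``a direct cocycle comparison,'' and that identification is the entire substantive content. Note also that the paper's proof of this theorem does not go through that identification at all: it reduces \emph{both} sides to the concrete norm condition $f_0\in \Q^{\times2}N(L^\times)$. Existence of $(A,B)$ is equivalent to this by the orbit parametrization (orbits correspond to pairs $(\alpha,s)$ with $N(\alpha)=s^2f_0^{n-3}$), while divisibility of $[J^1_{\frak m}]$ by $2$ implies it via a function-field argument: the two-cover yields the abelian cover $M\to C$ of exponent $2$ ramified only at $P,P'$, the factorization $g(x)=(x-\beta)j(x)$ over $L$ forces $L(M)$ to contain $u$ with $u^2=\alpha(t-\beta)$, and taking norms gives $N(u)^2=N(\alpha)g(t)=(w/b)^2$, whence $f_0=b^2N(\alpha)$. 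The equality of obstruction classes $[f_0]=\delta[J^1_{\frak m}]$ that your plan relies on is proved in the paper only as a subsequent strengthening, and requires a nontrivial auxiliary construction (a ``half-torsor'' $[J^{1/2}_a]$ in $H^1(\Q,J_{\frak m}\sqcup J^1_{\frak m})$ with $2[J^{1/2}_a]=[J^1_{\frak m}]$ and image $f_0g(a)$ in $\Q^\times/\Q^{\times2}$, together with an anticommutativity lemma for the connecting maps); without supplying that, your converse is a restatement of the problem rather than a proof.
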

See Theorem~\ref{thm:with8equiv} for a number of other equivalent conditions for the existence of $A$ and $B$ satisfying $f(x,y)=\disc(Ax-By)$.
It is of significance that the singular curve~$C_{\frak m}$ and the generalized Jacobian~$J_{\frak m}$ appear in Theorem~\ref{genJ}.  The generalized Jacobians appeared in \cite{PSh} for the purpose of doing 2-descent on the Jacobians of hyperelliptic curves with no rational Weierstrass point. As noted in~\cite[Footnote~2]{PSh}, in this case it is not always enough to study only unramified covers of $C$; one needs also covers of $C$ unramified away from the points above some fixed point on~$\P^1$.

The group $\SL_n(\Q)$ acts on the space $\Q^2\otimes\Sym_2\Q^n$ of pairs $(A,B)$ of symmetric bilinear forms on an $n$-dimensional vector space, and $\mu_2\subset \SL_n$ acts trivially since $n=2g+2$ is even.  The connection with Theorem~\ref{count} arises from the fact that we may parametrize elements of $\Sel_2(J^1)$ by certain orbits for the action of the group $(\SL_n/\mu_2)(\Q)$ on the space $\Q^2\otimes\Sym_2\Q^n$.
We say that an element $(A,B)\in\Q^2\otimes\Sym_2\Q^n$, or its $(\SL_n/\mu_2)(\Q)$-orbit, is {\it locally soluble} if the associated Fano variety $F(A,B)$ has a point locally over every place of $\Q$. Then we prove the following bijection:

\begin{theorem}\label{2Selpar}
Let $f(x,y)$ denote a binary form of even degree $n=2g+2$ over $\Q$ such that the hyperelliptic curve $C:z^2=f(x,y)$ is locally soluble. Then the $(\SL_n/\mu_2)(\Q)$-orbits of locally soluble pairs $(A,B)$ of symmetric bilinear forms in $n$ variables over $\Q$ such that $f(x,y)=\disc(Ax-By)$ are in bijection with the elements of the $2$-Selmer set $\Sel_2(J^1)$.
\end{theorem}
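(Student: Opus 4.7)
The plan is to realize the claimed bijection as an instance of Galois-cohomological orbit parametrization, with the extra structure identifying cohomology classes with two-covers of $J^1$ supplied by the Fano variety construction.

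First, I would recall from \cite{W} the ``generic'' orbit picture over an algebraically closed field: $(\SL_n/\mu_2)(\bar\Q)$ acts transitively on pairs $(A,B)$ satisfying $\disc(Ax-By)=f(x,y)$, and the stabilizer of any such pair is canonically isomorphic to $J[2]$. Fixing a base pair $(A_0,B_0)$ and applying the standard twisting principle then produces an injection
$$
\bigl\{(\SL_n/\mu_2)(\Q)\text{-orbits on pairs }(A,B)\text{ with invariant }f\bigr\}\ \longhookrightarrow\ H^1(\Q,J[2]).
$$
The main task is to identify the image with $\Sel_2(J^1)$ after restricting to locally soluble orbits.

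Second, I would interpret this injection geometrically through the Fano variety. For each pair $(A,B)$ with $\disc(Ax-By)=f(x,y)$, the variety $F(A,B)$ of common maximal isotropic subspaces is a principal homogeneous space for $J$, and the identity $2[F(A,B)]=[J^1]$ in $H^1(\Q,J)$ (recalled in \S\ref{generic}) endows it with a natural two-cover map $F(A,B)\to J^1$. This construction is $(\SL_n/\mu_2)(\Q)$-equivariant, and the stabilizer identification ensures that the cohomology class assigned to the orbit of $(A,B)$ by the twisting principle is precisely the class of the two-cover $F(A,B)\to J^1$ in $H^1(\Q,J[2])$. In particular, local solubility of an orbit, which is by definition local solubility of $F(A,B)$, matches exactly the local condition cutting out $\Sel_2(J^1)$ from the set of all two-covers of $J^1$. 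Thus locally soluble orbits inject into $\Sel_2(J^1)$.

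The final step is surjectivity, which is the main obstacle. Given a locally soluble two-cover $\pi\colon F\to J^1$ with class $\alpha\in\Sel_2(J^1)\subseteq H^1(\Q,J[2])$, one must produce a global rational pair $(A,B)$ with $\disc(Ax-By)=f(x,y)$ whose Fano two-cover reproduces $\pi$. Using the short exact sequence $1\to\mu_2\to\SL_n\to\SL_n/\mu_2\to 1$ and the vanishing $H^1(\Q_\nu,\SL_n)=0$, one sees that the obstruction to realizing $\alpha$ by a global orbit lies in $H^2(\Q,\mu_2)=\Br(\Q)[2]$, and that the analogous local obstructions control the existence of local orbits. Local solubility of $F$ provides local pairs $(A_\nu,B_\nu)$ at every place, hence makes the local obstructions vanish; the Hasse--Brauer--Noether theorem for $\Br(\Q)[2]$ then forces the global Brauer obstruction to vanish, producing the required $\Q$-rational orbit. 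Combining the injection of the second paragraph with this Hasse principle argument yields the bijection with $\Sel_2(J^1)$. The conceptual difficulty lies entirely in this last step, where a global pair is assembled from local data by invoking the reciprocity-type vanishing of the $\mu_2$-Brauer obstruction.
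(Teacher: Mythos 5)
Your overall architecture --- orbits twisted by classes in $H^1(\Q,J[2])$, matched with two-covers of $J^1$ via the Fano variety, with local solubility of $F(A,B)$ matching the Selmer condition and a Hasse principle in $\Br(\Q)[2]$ closing the argument --- is indeed the skeleton of the paper's proof (Theorems \ref{prop:lsol} and \ref{thm:lsoluble}, which together yield Theorem \ref{2Selpar}). But your third step, which you correctly identify as the crux, has a genuine gap in two places. First, the assertion that ``local solubility of $F$ provides local pairs $(A_\nu,B_\nu)$ at every place'' is not automatic: an abstract locally soluble two-cover is not given to you as a Fano variety, and producing a local pencil realizing the class $\pi(x_\nu)\in J^1(\Q_\nu)/2J(\Q_\nu)$ is exactly the content of the classification of soluble orbits (Theorems \ref{thm:sol} and \ref{thm:soluble}). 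That classification cannot be run with $J$ and the generic pencil alone --- the example $z^2=-x^2-y^2$ over $\R$ shows that divisibility of $[J^1]$ by $2$ in $H^1(K,J)$ does not suffice for orbits to exist --- and the paper is forced to introduce the singular curve $C_{\frak m}$, its generalized Jacobian $J_{\frak m}$, and the auxiliary regular pencil $(A',B')$ on $W\oplus K^2$ to get a criterion ($f_0\in K^{\times2}N(L^\times)$, equivalently $2\mid[J^1_{\frak m}]$) that actually detects orbits. None of this appears in your plan, yet it is where most of the work lives.

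Second, your global step implicitly requires a base $\Q$-rational orbit with invariant $f$ before the ``obstruction to realizing $\alpha$'' can be located in $\Br(\Q)[2]$ (relative to a base orbit the remaining orbits form a torsor under $\ker(\,\cdot\cup W[2])$, and \emph{that} comparison is a Brauer-group statement amenable to Hasse--Brauer--Noether). The existence of the base orbit itself is obstructed by the class of $f_0$ in $\Q^\times/\Q^{\times2}N(L^\times)\hookrightarrow H^2(\Q,J_{\frak m}[2])$, and neither of these groups satisfies a local-global principle in general, so ``local orbits exist everywhere'' does not formally give a global orbit. The paper's Theorem \ref{prop:lsol} gets around this by lifting the global Selmer class $F[4]\in\Sel_4(J/\Q)$ through the sequence $0\to T\to J_{\frak m}\to J\to 0$ and exploiting that $T$ is split by the \emph{cyclic} extension $\Q(\sqrt{f_0})/\Q$, for which $H^1(\Q,T)=\Q^\times/N(K'^\times)$ and $H^2(\Q,T[4])$ do satisfy local-global principles; it then still needs the cup-product computation $(F[4]-d_0)\cup W[2]=0$, carried out locally with the Weil pairing $e_4$, to land in a \emph{locally soluble} orbit rather than merely a rational one. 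To make your proposal a proof you would either have to supply this machinery or set up carefully the torsor (under $\SL_n/\mu_2$) of pairs $(A,B)$ \emph{equipped with} an isomorphism of Fano two-covers with the given $\pi$, verify that the stabilizer $J[2]$ acts on $F(A,B)$ by translation so that this really is a torsor, and only then invoke the Hasse principle for $H^1(\Q,\SL_n/\mu_2)$ --- with the local points still coming from the generalized-Jacobian local theory.
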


To obtain Theorem~\ref{count}, we require a version of Theorem~\ref{2Selpar} for integral orbits.
Let $\Z^2\otimes\Sym_2\Z^n$ denote the space of pairs $(A,B)$ of $n\times n$ symmetric bilinear forms over $\Z$.   
Then we prove the following theorem on integral representatives:

\begin{theorem}\label{2SelparZ}
There exists a positive integer $\kappa$ depending only on $n$ such that, for any integral binary form $f(x,y)$ of even degree $n=2g+2$
with $C:z^2=f(x,y)$ locally soluble over $\Q$, every $(\SL_n/\mu_2)(\Q)$-orbit of locally soluble pairs $(A,B)\in\Q^2\otimes\Sym_2\Q^n$ such that $\disc(Ax-By)=\kappa^2f(x,y)$ contains an element in $\Z^2\otimes\Sym_2\Z^n$. In other words, the $(\SL_n/\mu_2)(\Q)$-equivalence classes of locally soluble pairs $(A,B)\in \Z^2\otimes\Sym_2\Z^n$ such that $\disc(Ax-By)=\kappa^2f(x,y)$ are in bijection with the elements of $\Sel_2(J^1)$.
\end{theorem}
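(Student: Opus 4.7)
The plan is to start from the rational parametrization of Theorem~\ref{2Selpar} and show that every $(\SL_n/\mu_2)(\Q)$-orbit of locally soluble pairs with discriminant $\kappa^2 f(x,y)$ contains a $\Z$-integral representative, by a local-to-global argument combining local integrality at each prime with strong approximation for $\SL_n$. The scaling of the discriminant by $\kappa^2$ arises naturally because $\GL_n(\Q)$ acts on $\Q^2\otimes\Sym_2\Q^n$ by $\gamma\cdot(A,B)=(\gamma A\gamma^T,\gamma B\gamma^T)$, which rescales $\disc(Ax-By)$ by $(\det\gamma)^2$; thus applying any $\gamma\in\GL_n(\Q)$ with $\det\gamma=\kappa$ to a representative of an orbit supplied by Theorem~\ref{2Selpar} with discriminant $f$ produces a representative at discriminant $\kappa^2 f$, and the resulting $(\SL_n/\mu_2)(\Q)$-orbit depends only on the original one. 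So it suffices to find $\kappa$, depending only on $n$, such that each rational orbit at discriminant $\kappa^2 f$ admits a $\Z$-integral representative.

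Second, I would carry out the local analysis at each prime $p$. The claim is that there exist constants $\kappa_p\in\Z_{\ge 1}$, with $\kappa_p=1$ for all primes $p$ outside some finite set $S$ that depends only on $n$, such that every locally soluble $(\SL_n/\mu_2)(\Q_p)$-orbit in $\Q_p^2\otimes\Sym_2\Q_p^n$ whose discriminant lies in $\kappa_p^2 f\cdot\Z_p^\times$ contains an element of $\Z_p^2\otimes\Sym_2\Z_p^n$. For $p\notin S$, a smooth-model argument yields the integral representative directly: the generic stabilizer in $\SL_n$ of a regular pair is the finite étale group scheme $J[2]$, which has good reduction at $p$; Hensel's lemma then lifts a $\Z_p$-integral rational point of the Fano variety (guaranteed by local solubility) to an integral pair in the orbit. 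For $p\in S$---primes dividing $2$ or lying in the discriminant locus of the representation, a set bounded in terms of $n$---I would argue by $p$-adic compactness: locally soluble $\Q_p$-pairs with fixed integral discriminant form a bounded set, which meets only finitely many $\SL_n(\Z_p)$-orbits, and a suitable $\gamma_p\in\GL_n(\Q_p)$ with $v_p(\det\gamma_p)$ bounded by a constant $c(n,p)$ transports any rational representative into $\Z_p^2\otimes\Sym_2\Z_p^n$.

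Third, I would globalize the local choices using strong approximation for $\SL_n$. Setting $\kappa:=\prod_{p\in S}\kappa_p$ (a finite product depending only on $n$), the local data $\{\gamma_p\}$ determine a coset in $\GL_n(\Q_p)/\SL_n(\Z_p)$ at each prime, compatible with the fixed determinant $\kappa$. Since $\SL_n$ is simply connected and semisimple, strong approximation produces a global $\gamma\in\GL_n(\Q)$ with $\det\gamma=\kappa$ whose image in $\GL_n(\Q_p)$ differs from $\gamma_p$ by an element of $\SL_n(\Z_p)$ for every prime $p$. The transformed pair $\gamma\cdot(A_0,B_0)$ then lies in $\Z_p^2\otimes\Sym_2\Z_p^n$ at every $p$, and hence in $\Z^2\otimes\Sym_2\Z^n$, with discriminant $\kappa^2 f(x,y)$. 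This yields the desired integer representative in each rational orbit, producing the bijection stated in the theorem.

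The main obstacle is the analysis at the bad primes $p\in S$, where one needs effective control of the denominators of $\Z_p$-integral representatives uniformly in the binary form $f$. The crucial input is that local solubility of $(A,B)$---the existence of a $\Q_p$-point on the Fano variety $F(A,B)$---together with the integrality of $\kappa^2 f$ forces $(A,B)$ (up to the $\SL_n(\Q_p)$-action) into a compact $p$-adic region inside the representation space. Only finitely many $\SL_n(\Z_p)$-orbits meet this region, so the minimal required denominator is a universal function of $n$ and $p$ alone, independent of $f$; this is precisely what lets $\kappa_p$, and therefore $\kappa$, be taken to depend only on $n$.
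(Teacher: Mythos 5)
Your global skeleton---prove a $\Z_p$-statement at every prime and glue, using that $\SL_n$ has class number one---agrees in outline with the paper, and your observation that $\gamma\in\GL_n(\Q)$ with $\det\gamma=\kappa$ rescales the invariant binary form by $\kappa^2$ is a correct account of where the factor $\kappa^2$ can come from. But the local step, which is the entire content of the theorem, has a genuine gap. Your dichotomy is: (i) for $p$ outside a finite set $S$ depending only on $n$, use good reduction of the stabilizer $J[2]$ and Hensel's lemma; (ii) for $p\in S$, use $p$-adic compactness. The problem is that $J[2]$ has bad reduction precisely at the primes dividing $\Delta(f)$, and these are not bounded in terms of $n$: for a general integral binary form $f$ they are arbitrary. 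So case (i) does not cover the primes $p\nmid 2$ with $p\mid\Delta(f)$ (let alone $p^2\mid\Delta(f)$), and these are exactly the hard primes; they cannot be absorbed into $S$ without making $\kappa$ depend on $f$. Moreover, the compactness argument in (ii) does not prove what is needed. The set of \emph{integral} pairs with given discriminant is compact and meets finitely many $\SL_n(\Z_p)$-orbits, but the issue is whether a given soluble $\Q_p$-orbit meets the integral points at all after a rescaling bounded \emph{uniformly over all integral $f$}. That uniformity is precisely the assertion of the theorem; a bound extracted from ``only finitely many orbits meet a bounded region'' is a priori a function of $f$ (through $\Delta(f)$ and the index of the order $R_f$ in the maximal order), so the argument is circular at the decisive point.

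The paper's proof is instead constructive and works at \emph{every} prime with no good-reduction hypothesis. Using the parametrization of integral orbits by triples $(I,\alpha,s)$ with $I$ a fractional ideal of the (possibly non-maximal) order $R_f$ (Theorem~\ref{orbit2}, applied with $D=\Z$ and $D=\Z_p$ to reduce to a purely local problem), it represents the given soluble class in $J^1(\Q_p)/2J(\Q_p)$ by a Galois-invariant divisor $(P_1)+\cdots+(P_m)-D^*$ with $m\le g+1$ odd, forms $P(x)=\prod(x-a_i)$ and the Lagrange interpolation polynomial $R(x)$ through the points $P_i$, and exhibits the integral orbit explicitly as $I_D=\langle R(\theta),\,P(\theta)I_f((n-3-m)/2)\rangle$ with $\alpha=P(\theta)$, checking $I_D^2\subset\alpha I_f(n-3)$ by degree counts and computing $N(I_D)$ by a specialization argument. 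The constant $\kappa=4^n$ enters only at $p=2$: the divisibility $2^{4i}\mid f_i/f_0$ forces $a-\theta\in L^{\times2}\Q_2^\times$ for non-integral $a$, which allows one to discard non-integral interpolation data by a Newton-polygon analysis and induct on $m$. To salvage your approach you would have to supply this construction (or an equivalent) at every prime dividing $\Delta(f)$; Hensel's lemma plus compactness will not do it.
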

We will prove Theorem \ref{2SelparZ} for $\kappa = 4$ but we expect
this can be improved.
We use Theorem~\ref{2SelparZ}, together with the results of \cite{B} giving the number of $\SL_n(\Z)$-orbits on $\Z^2\otimes\Sym_2\Z^n$ having bounded height, and a sieve, to deduce Theorem~\ref{count}.

We note that the emptiness of $J^1(\Q)$
for hyperelliptic curves $C$ over $\Q$ has been demonstrated previously for certain special
algebraic
families. In \cite{CTP}, Colliot-Th\'el\`ene and Poonen
constructed one-parameter algebraic families of curves $C=C_t$ of genus $1$ and genus $2$ for which the varieties $J^1$ have a Brauer-Manin obstruction to having a rational point for all $t\in\Q$. (We note that the family of genus $2$ curves
considered in~\cite{CTP} consists of hyperelliptic curves $C$ over $\Q$ with locally soluble $J^1(C)$ but not locally soluble $\Div^1(C).$) For arbitrary genus $g\geq6$ with $4\nmid g$, Dong Quan~\cite{NDQ} constructed such one-parameter algebraic families of locally soluble hyperelliptic curves $C=C_t$ having empty $J^1(\Q)$ for every $t\in\Q$.

This paper is organized as follows.  In Section~\ref{dedorbits}, we
introduce the key representation $2\otimes\Sym_2(n)$ of $\SL_n$ on
pairs of symmetric bilinear forms that we will use to study the
arithmetic of hyperelliptic curves. We adapt the results of
Wood~\cite{Wood1} to study the orbits of this representation over a general Dedekind domain $D$ whose characteristic is not equal to $2$.  In Section~\ref{hyp}, we introduce hyperelliptic curves and some of the relevant properties of their generalized Jacobians.
In Section~\ref{generic}, we then relate hyperelliptic curves to generic pencils of quadrics over a field $K$ of characteristic not equal to 2, and we review the results that we will need from \cite{W}.  In Section~\ref{regular}, we then study {\it regular} pencils of quadrics, which allows us to determine which binary $n$-ic forms over $K$ arise as the discriminant of a pencil of quadrics over $K$; in particular, we prove Theorem~\ref{genJ}.

In Section~\ref{soluble}, we describe how the {\it $K$-soluble} orbits (i.e., orbits of those $(A,B)$ over $K$ such that $F(A,B)$ has a $K$-rational point), having associated hyperelliptic curve $C$ over $K$, are parametrized by elements of the set $J^1(K)/2J(K)$. We study the orbits over some arithmetic fields in more detail in Section~\ref{arithmetic} and then we focus on global fields and discuss {\it locally soluble} orbits in Section~\ref{sec:global}. We show that the locally soluble orbits over $\Q$, having associated hyperelliptic curve $C$ over $\Q$ are parametrized by the elements of the finite set $\Sel_2(J^1)$, proving Theorem~\ref{2Selpar}. The existence of integral orbits (Theorem \ref{2SelparZ}) is demonstrated in Section~\ref{integral}.  We then discuss the counting results from~\cite{B} that we need in Section~\ref{counting}, and discuss the details of the required sieve in Section~\ref{sec:sieve}.  Finally, we complete the proofs of Theorems~\ref{count} and \ref{countk} in the final Section~\ref{mainproofs}.

\section{Orbits of pairs of symmetric bilinear forms over a Dedekind domain}\label{dedorbits}

In this section, we study the orbits of our key representation $2\otimes\Sym_2(n)$ over a Dedekind domain $D$.   In~later sections, we will specialize to the case when $D$ is a field, $\Z_p$ or $\Z$. We will also relate these results on orbits to the arithmetic of hyperelliptic curves.

Let $K$ denote the quotient field of $D$. We assume throughout this paper that the characteristic of $K$ is not equal to $2$. Let $n \geq 2$ be an integer. The group $\SL_n(D)$ acts on the $D$-module of pairs $(A,B)$ of symmetric bilinear forms on a free $D$-module $W$ of rank $n$. After a choice of basis for $W$, this is the representation $D^2 \otimes \Sym_2 D^n = \Sym_2 D^n \oplus \Sym_2 D^n$.

The coefficients of the binary $n$-ic form
$$f(x,y) = \disc(xA - yB) := (-1)^{n(n-1)/2} \det(xA-yB) = f_0 x^n + f_1 x^{n-1}y + \cdots + f_n y^n,$$
which we call the {\it invariant binary $n$-ic form} of the element $(A,B)\in D^2 \otimes \Sym_2 D^n$, give $n+1$ polynomial invariants of degree $n$ which freely generate the ring of polynomial invariants over $D$.
We also have the invariant {\it discriminant} polynomial $\Delta(f) = \Delta (f_0,f_1,\ldots, f_n)$ given by the discriminant of the binary form $f$, which has degree $2n(n-1)$ in the entries of $A$ and $B$.

In Wood's work~\cite{Wood1}, the orbits of $\SL_n^\pm(T)=\{g\in\GL_n(T):\det(g)=\pm1\}$ on $T^2\otimes\Sym_2T^n$ were classified for general rings (and in fact even for general base schemes)  $T$ in terms of ideal classes of rings of rank $n$ over $T$.  In this section, we translate these results into a form that we will use later on, in the important special case where $T=D$ is a Dedekind domain with quotient field~$K$.  In particular, we will need to use the actions by the groups $\SL_n(D)$ and in the case $n$ is even, the group $(\SL_n/\mu_2)(D)$ rather than $\SL_n^\pm(D)$. This causes some key changes in the parametrization data and will indeed be important for us when we make the connection with hyperelliptic curves.

Let us assume that $f_0\neq 0$ and write $f(x,1) = f_0 g(x)$, where $g(x)$ has coefficients in the quotient field $K$ and has $n$ distinct roots in a separable closure $K^s$ of $K$. Let $L = L_f:= K[x]/g(x)$ be the corresponding \'etale algebra of rank $n$ over $K$, and let $\theta$ be the image of $x$ in the algebra $L$. Then $g(\theta) = 0$ in $L$. Let $g'(x)$ be the derivative of $g(x)$ in $K[x]$; since $g(x)$ is separable, the value $g'(\theta)$ must be an invertible element of $L$. We define $f'(\theta) = f_0 g'(\theta)$ in $L^\times$.

For $k = 1,2,\ldots,n-1$, define the integral elements
$$\zeta_k = f_0 \theta^k + f_1 \theta^{k-1} + \cdots + f_{k-1} \theta$$
in $L$, and let $R=R_f$ be the free $D$-submodule of $L$ having $D$-basis $\{ 1,\zeta_1,\zeta_2, \ldots,\zeta_{n-1}\}$. For $k=0,1,\ldots, n-1$, let $I(k)$ be the free $D$-submodule of $L$ with basis $\{ 1, \theta, \theta^2,\ldots, \theta^{k}, \zeta_{k+1}, \ldots, \zeta_{n-1}\}$. Then $I(k)=I(1)^k$, and $I(0) = R \subset I(1) \subset \cdots \subset I(n-1)$. Note that $I(n - 1)$ has the power basis $\{ 1, \theta, \theta^2 ,\ldots ,\theta^{n-1} \}$, but that the elements of $I(n-1)$ need not be integral when $f_0$ is not a unit in $D$.

A remarkable fact (cf.~\cite{BM},~\cite[Proposition 1.1]{Nakagawa},~\cite[\S2.1]{Wood}) is that $R$ is a $D$-order in $L$ of discriminant $\Delta(f)$, and the free $D$-modules $I(k)$ are all fractional ideals of $R$. The fractional ideal
$(1/f'(\theta))I(n-2)$ is the dual of $R$ under the trace pairing on $L$, and the fractional ideal $I(n-3)$ will play a crucial role in the parametrization of orbits in our representation.

We then have the following translation of \cite[Theorem~1.3]{Wood1} in the case of the action of $\SL_n(D)$ on $D^2\otimes\Sym_2D^n$, where $D$ is a Dedekind domain:

\begin{theorem}\label{orbit}
Assume that $f(x,y)$ is a binary form of degree $n$ over $D$ with
$\Delta(f) \neq 0$ and $f_0 \neq 0$. Then there is a bijection {\em
  (}to be described below{\em )} between
orbits for $\SL_n(D)$ on $D^2 \otimes \Sym_2 D^n$ with invariant form
$f$ and
equivalence classes of triples $(I,\alpha,s)$, where
$I$ is a fractional ideal for~$R$, $\alpha \in L^\times$, and $s \in
K^\times$, satisfying the relations $I^2 \subset \alpha I(n-3)$,
$N(I)$ is the principal fractional ideal $sD$ in $K$, and $N(\alpha) = s^2 f_0^{n-3}$
in $K^\times$.
The triple $(I,\alpha, s)$ is equivalent to the triple $(cI,c^2\alpha, N(c)s)$ for any $c \in
L^\times$. The stabilizer of a triple $(I,\alpha,s)$ is $S^\times[2]_{N=1}$ where $S=\End_R(I)\subset~L$.
\end{theorem}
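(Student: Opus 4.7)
The plan is to mirror Wood's parametrization strategy from \cite{Wood1} while tracking the extra datum needed to distinguish $\SL_n(D)$-orbits from those of larger groups like $\SL_n^{\pm}(D)$. Given a pair $(A,B) \in D^2 \otimes \Sym_2 D^n$ with invariant $f$ of nonvanishing discriminant and leading coefficient $f_0 \neq 0$, base change to $K$ makes $A$ invertible, and the endomorphism $T := A^{-1}B$ of $W_K := W \otimes_D K$ satisfies $g(T) = 0$, where $g(x) = f(x,1)/f_0$. Separability of $g$ makes $W_K$ a rank-one module over $L = K[\theta]$; picking a generator $w_0 \in W_K$ yields an $L$-linear isomorphism $W_K \cong L$ sending $T$ to $\theta$-multiplication. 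Although $T$ itself need not preserve the lattice $W$, the integral operators $\zeta_k(T) = f_0 T^k + f_1 T^{k-1} + \cdots + f_{k-1} T$ do, so $W$ is a module over $R$, whose image in $L$ under the above identification is a fractional $R$-ideal $I$, well-defined up to $L^\times$-rescaling of $w_0$.

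Next, the forms $A$ and $B$, transferred to $L$, must take the shape $A(v,w) = \Trace_{L/K}(\alpha v w/f'(\theta))$ and $B(v,w) = \Trace_{L/K}(\alpha \theta v w/f'(\theta))$ for a unique $\alpha \in L^\times$; this comes from the classification of symmetric $K$-bilinear forms on $L$ that are self-adjoint with respect to $\theta$-multiplication. Integrality of $A$ and $B$ on $I \times I$, combined with the description of duals under the trace pairing (in particular the identification of $f'(\theta)^{-1} I(n-3)$ as the trace-dual of $I(1)$), translates to the single relation $I^2 \subset \alpha I(n-3)$. To refine from $\SL_n^\pm$ to $\SL_n$, we trivialize the top exterior power: a fixed orientation of $D^n$, transported through $W \cong I \subset L$, identifies $\det_D I = N(I)$ with a principal $D$-ideal with a chosen generator $s \in K^\times$, giving $N(I) = sD$. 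Computing $\det A$ in both the $W$- and $I$-coordinates, using that $\det A = (-1)^{n(n-1)/2} f_0$, forces $N(\alpha) = s^2 f_0^{n-3}$. Rescaling $w_0 \mapsto c w_0$ sends $I \mapsto cI$, absorbs compensating factors into $\alpha \mapsto c^2\alpha$ in the trace formula, and rescales $s \mapsto N(c) s$, giving the stated equivalence on triples.

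Conversely, given a triple $(I,\alpha,s)$ satisfying the stated relations, the displayed trace formulas define symmetric bilinear forms on $I$ that are integral by $I^2 \subset \alpha I(n-3)$, and any $D$-basis of $I$ whose wedge product matches $s$ under the chosen trivialization yields a well-defined $\SL_n(D)$-orbit of pairs. The two constructions are inverse to each other on equivalence classes, producing the bijection. For the stabilizer, an element $c \in L^\times$ preserves $(I,\alpha,s)$ iff $c \in \End_R(I)^\times = S^\times$ (from $cI = I$), $c^2 = 1$ (from $c^2\alpha = \alpha$), and $N(c) = 1$ (from $N(c)s = s$), yielding $S^\times[2]_{N=1}$.

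The main technical obstacle is the careful bookkeeping required to derive the integrality condition $I^2 \subset \alpha I(n-3)$ and the norm relation $N(\alpha) = s^2 f_0^{n-3}$: this hinges on correctly identifying the trace-duals of the ideals $I(k)$ and tracking the many factors of $f_0$ and $f'(\theta)$ that appear when comparing bases of $W$ with bases of $I \subset L$. A secondary subtlety, which is precisely the content added beyond Wood's $\SL_n^\pm$-level statement, is verifying that $s$ encodes exactly the information distinguishing $\SL_n$- from $\SL_n^\pm$-orbits, so that the stabilizer sharpens correctly from $S^\times[2]$ down to $S^\times[2]_{N=1}$.
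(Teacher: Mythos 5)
Your proposal follows essentially the same route as the paper, which likewise obtains the bijection by translating Wood's parametrization \cite[Theorem~1.3]{Wood1}: the fractional ideal $I$ comes from the $R$-module structure that $A^{-1}B$ induces on the lattice, $\alpha$ comes from the classification of $\theta$-self-adjoint symmetric bilinear forms on $L$ (the paper phrases the forms as the $\zeta_{n-1}$- and $\zeta_{n-2}$-coefficients of $\lambda\mu/\alpha$ in $I(n-3)$, which by Euler's formulas for $\Trace(\theta^k/g'(\theta))$ is exactly your trace description), and $s$ trivializes the top exterior power $N(I)=sD$ so as to refine $\GL_n$- to $\SL_n$-orbits, with the stabilizer computation identical. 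The only slip is a convention mismatch: with your formula $A(v,w)=\Trace_{L/K}(\alpha vw/f'(\theta))$ the integrality condition comes out as $\alpha I^2\subset I(n-3)$ rather than $I^2\subset \alpha I(n-3)$, so you should replace $\alpha$ by $\alpha^{-1}$ in the trace formulas (matching the paper's $\lambda\mu/\alpha$) for consistency with the stated relations and the norm identity $N(\alpha)=s^2f_0^{n-3}$.
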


{}From a triple $(I,\alpha, s)$, we construct an orbit as follows. Since $N(I)$ is the principal $D$-ideal $sD$, the projective $D$-module $I$ of rank $n$ is free. Since $I^2
\subset \alpha I(n-3)$, we obtain two symmetric bilinear forms on the free module $I$ by defining $\langle \lambda, \mu \rangle_A$ and $\langle \lambda, \mu \rangle_B$ as
the respective coefficients of $\zeta_{n-1}$ and $\zeta_{n-2}$ in the basis expansion of the product $\lambda\mu/\alpha$ in $I(n-3)$. We obtain an $\SL_n(D)$-orbit of two symmetric $ n \times n$ matrices $(A,B)$ over $D$ by taking the Gram matrices of these forms with respect to any ordered basis of $I$ that gives rise to the basis element $s(1 \wedge \zeta_1 \wedge \zeta_2 \wedge \ldots \wedge \zeta_{n-1})$ of the top exterior power of $I$ over $D$. This normalization deals with the difference between $\SL_n(D)$- and $\GL_n(D)$-orbits.
The stabilizer statement follows because elements in $S^\times[2]_{N=1}$ are precisely the elements of $L^\times_{N=1}$ that preserve the map $\frac{1}{\alpha}\times:I\times I\rightarrow I(n-3).$

Conversely, given an element $(A,B)\in D^2\otimes\Sym_2D^n$, we construct the ring $R=R_f$ from $f$ as described above, where $f(x,y)=\disc(xA-yB)$.  The $R$-module $I$ is then
constructed by letting $\theta \in L$ act on
$K^n$ by the matrix $A^{-1}B$. Then $\zeta_1 = f_0\theta \in R$ preserves the lattice $D^n$. Similarly, formulas for the action of each $\zeta_i \in R$ on
$D^n$, in terms of integral polynomials in the entries of $A$ and $B$, can
be worked out when $A$ is assumed to be invertible; these same formulas
can then be used to show that $D^n$ is an $R$-module, even when $A$ is not invertible.  See \cite[\S3.1]{Wood1} for the
details.

\bigskip

When $n = 2m$ is even, the larger group $(\SL_n/\mu_2) (D)$ acts on the representation $D^2 \otimes \Sym_2D^n$, and distinct orbits for the subgroup $\SL_n(D)/\mu_2(D)$ may become identified as a single orbit for the larger group. Since a projective module of rank $n$ over $D$ whose top exterior power is a free module is itself free of rank $n$ by \cite[Theorem 1.6]{MK}, we have $H^1(D, \SL_n) = 1$ and hence an exact sequence of groups
$$ 1 \rightarrow \SL_n(D)/\mu_2(D) \rightarrow (\SL_n/\mu_2) (D) \rightarrow H^1(D, \mu_2) \rightarrow 1.$$
By Kummer theory, the quotient group $H^1(D,\mu_2)$ lies in an exact sequence
$$1 \rightarrow D^\times/D^{\times2} \rightarrow H^1(D, \mu_2) \rightarrow \Pic(D)[2] \rightarrow 1.$$
The image of the group $H^1(D,\mu_2)$ in $H^1(K,\mu_2) = K^\times/K^{\times2}$ is the subgroup $K^{\times(2)}/K^{\times2}$ of elements $t$ such that the principal ideal $tD = M^2$
is a square, and the map
to $\Pic(D)[2]$ is given by mapping such an element $t$ to the class of $M$. The action of $t$ on a triple $(I,\alpha,s)$ with invariant form $f$ is given by
$$t\cdot (I,\alpha, s) = (M I,t\alpha, t^{n/2}s).$$ Along with the action of $\SL_n(D)$ on such triples, this gives an action of $(\SL_n/\mu_2) (D)$ on these triples. The equivalence classes of triples under this action of $(\SL_n/\mu_2) (D)$ give the orbits of $(\SL_n/\mu_2) (D)$ with invariant form $f$. The stabilizer of the triple $(I,\alpha,s)$ contains the finite group
$S^\times[2]_{N=1}/ D^\times[2]$ where $S=\End_R(I)\subset L$, since that is the image of the stabilizer from $\SL_n(D)$.

\begin{theorem}\label{orbit2}
Assume that $f(x,y)$ is a binary form of even degree $n$ over $D$ with
$\Delta(f) \neq 0$ and $f_0 \neq 0$. Then there is a bijection between
orbits for $(\SL_n/\mu_2)(D)$ on $D^2 \otimes \Sym_2 D^n$ with
invariant form $f$ and
equivalence classes of triples $(I,\alpha,s)$, where
$I$ is a fractional ideal for~$R$, $\alpha \in L^\times$, and $s \in K^\times$,
satisfying the relations $I^2 \subset \alpha I(n-3)$,
$N(I)$ is the principal fractional ideal $sD$ in $K$, and $N(\alpha) = s^2 f_0^{n-3}$
in $K^\times$.
The triple $(I,\alpha, s)$ is equivalent to the triple $(cMI,c^2t\alpha, N(c)t^{n/2}s)$ for any $c \in
L^\times$ and $t\in K^{\times(2)}$, where $tD=M^2$. The stabilizer of the triple $(I,\alpha,s)$ is an elementary abelian $2$-group which contains
$S^\times[2]_{N=1}/ D^\times[2]$ where $S=\End_R(I)\subset L$.

\end{theorem}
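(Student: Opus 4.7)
The plan is to deduce Theorem \ref{orbit2} from Theorem \ref{orbit} by identifying how the extra symmetries in $(\SL_n/\mu_2)(D)$ beyond $\SL_n(D)/\mu_2(D)$ act on the $\SL_n(D)$-orbit classification. Since $n=2m$ is even, $\mu_2 \subset \SL_n$ acts trivially on $D^2 \otimes \Sym_2 D^n$, so the orbits of $\SL_n(D)$ and of $\SL_n(D)/\mu_2(D)$ on this space coincide, and it suffices to describe how the cokernel $(\SL_n/\mu_2)(D)/[\SL_n(D)/\mu_2(D)]$, which by the long exact sequence already displayed in the paper equals all of $H^1(D,\mu_2)$, further collapses these orbits. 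Via the Kummer sequence, a class in $H^1(D,\mu_2)$ is represented by a pair $(M,t)$ with $M$ a fractional ideal of $D$ and $t \in K^\times$ satisfying $M^2 = tD$, modulo the principal pairs $(cD, c^2)$, and its image in $H^1(K,\mu_2) = K^\times/K^{\times 2}$ is precisely $K^{\times(2)}/K^{\times 2}$.

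First I would fix such a pair $(M,t)$ and exhibit a representative $\bar g_t \in (\SL_n/\mu_2)(D)$: \'etale-locally on $\Spec D$, the class $t$ becomes a square, and the element lifts to scalar multiplication by $\sqrt t \cdot I_n$ in $\SL_n$; the $\mu_2$-ambiguity of this lift is exactly what allows global descent to $(\SL_n/\mu_2)(D)$, while the evenness of $n$ ensures the scalar lies in the image of $\SL_n$ modulo $\mu_2$ rather than only in $\PGL_n$. Applying $\bar g_t$ to a representative $(A,B)$ with triple $(I,\alpha,s)$ and tracing through the construction of Theorem \ref{orbit}, I would verify that $\bar g_t \cdot (A,B)$ corresponds to the triple $(MI, t\alpha, t^m s)$. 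The three compatibility checks are all direct: $(MI)^2 = M^2 I^2 \subset tD \cdot \alpha I(n-3) = (t\alpha)\, I(n-3)$; the ideal norm gives $N(MI) = N(M)\,N(I) = t^m \cdot sD$; and $N(t\alpha) = t^n N(\alpha) = (t^m s)^2 f_0^{n-3}$. The power $t^m$ rather than $t$ in the third coordinate is forced by the normalization of the top exterior power $\Lambda^n(MI) = M^n \cdot \Lambda^n(I) = t^m s (1 \wedge \zeta_1 \wedge \cdots \wedge \zeta_{n-1})$, which reflects the $\SL_n$ (versus $\GL_n$) nature of the correspondence. Composing with the $L^\times$-action $c : (I,\alpha,s) \mapsto (cI, c^2\alpha, N(c)s)$ of Theorem \ref{orbit} yields the stated equivalence $(I,\alpha,s) \sim (cMI, c^2 t\alpha, N(c)t^m s)$, and the principal pairs $(cD,c^2)$ in $H^1(D,\mu_2)$ are absorbed into this $L^\times$-action, confirming that the enlarged relation is well-defined.

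For the converse, suppose $(A,B)$ and $(A',B')$ lie in the same $(\SL_n/\mu_2)(D)$-orbit; the long exact sequence splits the intertwining element as an $\SL_n(D)$-piece, giving an equivalence of triples in the sense of Theorem \ref{orbit}, followed by an $H^1(D,\mu_2)$-piece, which by the previous paragraph produces the $(M,t)$-twist. Hence no equivalences beyond those listed are introduced. Finally, the stabilizer of $(I,\alpha,s)$ in $(\SL_n/\mu_2)(D)$ fits into an exact sequence with kernel equal to the image of the $\SL_n(D)$-stabilizer $S^\times[2]_{N=1}$ modulo $D^\times[2]$, and cokernel contained in the $2$-torsion group $H^1(D,\mu_2)$; as an extension of two elementary abelian $2$-groups it is itself elementary abelian $2$, and contains the image of $S^\times[2]_{N=1}/D^\times[2]$ as required.

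The main obstacle will be a clean verification that scalar multiplication by $\sqrt t$, which a priori defines only an element of $\PGL_n$, genuinely descends to $(\SL_n/\mu_2)(D)$ and produces the claimed action on $(A,B)$; this is precisely where the hypothesis $M^2 = tD$ and the parity $n=2m$ are indispensable, and where the factor $t^m$ (rather than $t$) in the normalization of $s$ naturally emerges.
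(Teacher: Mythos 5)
Your overall route is the same as the paper's: deduce Theorem \ref{orbit2} from Theorem \ref{orbit} by letting the quotient $H^1(D,\mu_2)$ of $(\SL_n/\mu_2)(D)$ by $\SL_n(D)/\mu_2(D)$ act on the set of triples, identifying a class in $H^1(D,\mu_2)$ with a pair $(M,t)$ satisfying $tD=M^2$ modulo the principal pairs $(cD,c^2)$, and showing that this class sends $(I,\alpha,s)$ to $(MI,t\alpha,t^ms)$. (The paper does exactly this in the paragraph preceding the theorem, asserting the action formula without the verifications you supply.) Your checks of the three compatibility relations, the normalization $\Lambda^n(MI)=M^n\Lambda^n(I)$ explaining the exponent $m$, and the absorption of the principal pairs into the $L^\times$-action are all correct.

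Two steps are wrong as written, though both are repairable. First, $\sqrt t\cdot I_n$ has determinant $t^{n/2}=t^m$, so it does not lie in $\SL_n$; the \'etale-local lift of an element of $(\SL_n/\mu_2)(D)$ lying over the class of $t$ instead has the form $\sqrt t\,h$ with $h\in\GL_n$ of determinant $t^{-m}$, and it is precisely this determinant correction (equivalently, your top-exterior-power computation) that forces $t^ms$ rather than $ts$ in the third coordinate --- so your ``main obstacle'' is real but is a determinant issue, not a $\PGL_n$-descent issue. Second, and more seriously, the claim that an extension of two elementary abelian $2$-groups is elementary abelian is false: $\Z/4\Z$ is an extension of $\Z/2\Z$ by $\Z/2\Z$, and $D_4$ and $Q_8$ are extensions of $(\Z/2\Z)^2$ by $\Z/2\Z$. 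Your argument for the stabilizer being an elementary abelian $2$-group therefore does not go through as stated. The correct reason is that the stabilizer of $(A,B)$ in $(\SL_n/\mu_2)(D)$ embeds into its stabilizer in $(\SL_n/\mu_2)(K)$, which by Corollary \ref{cor:orbitSLmu2} is the group of $K$-points of the finite commutative group scheme $(\Res_{L/K}\mu_2)_{N=1}/\mu_2$; this scheme is killed by $2$, so every element of the stabilizer squares to the identity and the group, being abelian, is elementary abelian.
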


\begin{remark}\label{rmk:maximal} \emph{We can simplify the statement
    of Theorem \ref{orbit2} when the domain $D$ is a principal ideal domain and every
    fractional ideal for the $D$-order $R$ is principal. In
    that case, the fractional ideal $I$ of $R$ is completely
    determined by the pair $(\alpha, s)$ and the identities $I^2
    \subset (\alpha) I(n-3)$, $N(I) = (s)$, and $N(\alpha) = s^2
    f_0^{n-3}$. Indeed, together these force $I^2 = (\alpha)
    I(n-3)$. There is a bijection
from the set of equivalence classes of $\alpha$ to the set $(R^\times/R^{\times2}D^\times)_{N=f_0}.$ Moreover, we have $S=\End_R(I)=R$  and $K^{\times(2)}=D^\times K^{\times2}$. An element $t\in K^{\times(2)}/K^\times$ preserves an $\SL_n(D)$-orbit if and only if $t=c^2\in R^{\times2}$ for some $c\in R^\times$ with $N(c)=t^{n/2}.$ Note if $t=c^2$, then $N(c)=(-t)^{n/2}$. Hence the stabilizer in $(\SL_n/\mu_2) (D)$ of a triple $(I,\alpha,s)$ equals $(R^\times[2])_{N=1}/D^\times[2]$ if $n\equiv 2\pmod{4}$ and fits into the exact sequence}
\begin{equation}\label{eq:maximalstab}
1\rightarrow (R^\times[2])_{N=1}/D^\times[2]\rightarrow \Stab_{(\SL_n/\mu_2)(D)}(I,\alpha,s)\rightarrow (R^{\times2}\cap D^\times)/D^{\times2}\rightarrow 1,
\end{equation}
\emph{when} $n\equiv 0\pmod{4}$. \emph{When $L$ is not an algebra over a quadratic extension of $K$, the quotient \linebreak $(R^{\times2}\cap D^\times)/D^{\times2}$ is trivial.}
\end{remark}
In particular, when $D=K$ is a field, we recover \cite[Theorems 7 and 8]{AITII}. These versions of Theorems~\ref{orbit} and \ref{orbit2} over a field $K$ will also be important in the sequel. For convenience, we restate them below.

\begin{corollary}\label{cor:orbitSL}
Assume that $f(x,y)$ is a binary form of degree $n$ over~$K$ with
$\Delta(f) \neq 0$ and $f_0 \neq 0$. Then there is a bijection between
orbits for $\SL_n(K)$ on $K^2 \otimes \Sym_2 K^n$ with invariant form
$f$ and
equivalence classes of pairs $(\alpha,s)$, where $\alpha\in L^\times$ and $s\in K^\times$, satisfying $N(\alpha) = s^2 f_0^{n-3}$
in $K^\times$.
The pair $(\alpha, s)$ is equivalent to the pair $(c^2\alpha, N(c)s)$ for any $c \in
L^\times$.
The stabilizer of the orbit corresponding to a pair $(\alpha,s)$ is the finite commutative group scheme $(\Res_{L/K}\mu_2)_{N=1}$ over~$K$.
\end{corollary}
It follows from Corollary \ref{cor:orbitSL} that the set of $\SL_n(K)$-orbits is either in bijection with or has a 2-to-1 map to $(L^\times/L^{\times2})_{N=f_0}$, in accordance with whether $f(x,y)$ has an odd degree factor over $K$ or not, respectively. Indeed, the pair $(\alpha,s)$ is equivalent to the pair $(\alpha, -s)$ if and only if there is an element $c \in L^{\times}$ with $c^2 = 1$ and $N(c) = -1$. The stabilizers correspond to the $K$-rational even degree factors of $f(x,y)$.

\begin{corollary}\label{cor:orbitSLmu2}
Assume that $f(x,y)$ is a binary form of even degree $n$ over $K$ with
$\Delta(f) \neq 0$ and $f_0 \neq 0$. Then there is a bijection between
orbits of $(\SL_n/\mu_2)(K)$ on $K^2 \otimes \Sym_2 K^n$ with
invariant form $f$ and
equivalence classes of pairs $(\alpha,s)$ where $\alpha\in L^\times$ and $s\in K^\times$ satisfying $N(\alpha) = s^2 f_0^{n-3}$
in $K^\times$.
The pair $(\alpha, s)$ is equivalent to the pair $(c^2t\alpha, N(c)t^{n/2}s)$ for any $c \in
L^\times$ 
and $t\in K^{\times(2)}=K^\times$. The stabilizer of the orbit corresponding to a pair $(\alpha,s)$ is the finite commutative group scheme $(\Res_{L/K}\mu_2)_{N=1}/\mu_2$ over~$K$.
\end{corollary}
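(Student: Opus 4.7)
The plan is to derive this Corollary directly from Theorem \ref{orbit2} and Remark \ref{rmk:maximal} by specializing to $D=K$.

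First, I would observe that when $D=K$ is a field, the order $R$ built from $f$ is the full \'etale algebra $L$. Indeed, $R$ has $K$-basis $\{1,\zeta_1,\dots,\zeta_{n-1}\}$ with $\zeta_k=f_0\theta^k+\cdots$ and $f_0\neq 0$, so $R=L$ as $K$-subspaces of $L$; hence every $I(k)$ equals $L$ and every fractional $R$-ideal is principal. Following the setup of Remark~\ref{rmk:maximal}, the triple $(I,\alpha,s)$ collapses to the pair $(\alpha,s)$: we may take $I=R=L$ (absorbing any chosen generator into the action of $c\in L^\times$), whereupon $I^2\subset\alpha I(n-3)$ is automatic, $N(I)=sD$ reduces to $s\in K^\times$, and only the constraint $N(\alpha)=s^2f_0^{n-3}$ survives.

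Second, I would specialize the equivalence relation. Since $\Pic(K)$ is trivial, any $t\in K^\times$ satisfies $tK=M^2$ with $M=K$, so $K^{\times(2)}=K^\times$ and the ideal $M$ in the equivalence $(cMI,c^2t\alpha,N(c)t^ms)$ of Theorem~\ref{orbit2} contributes nothing. The equivalence reduces precisely to $(\alpha,s)\sim(c^2t\alpha,N(c)t^ms)$ for $c\in L^\times$ and $t\in K^\times$, matching the statement of the Corollary.

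Finally, and most subtly, I would identify the stabilizer with the group scheme $(\Res_{L/K}\mu_2)_{N=1}/\mu_2$. Exact sequence (\ref{eq:maximalstab}) of Remark~\ref{rmk:maximal} specializes to
\[ 1\rightarrow (L^\times[2])_{N=1}/K^\times[2]\rightarrow \Stab_{(\SL_n/\mu_2)(K)}(\alpha,s)\rightarrow (L^{\times 2}\cap K^\times)/K^{\times 2}\rightarrow 1, \]
which is precisely the long exact sequence of $K$-points attached to
\[ 1\rightarrow\mu_2\rightarrow(\Res_{L/K}\mu_2)_{N=1}\rightarrow(\Res_{L/K}\mu_2)_{N=1}/\mu_2\rightarrow 1 \]
under $H^0(K,-)$, using $H^1(K,\mu_2)=K^\times/K^{\times 2}$ to identify the kernel of the next connecting map with $(L^{\times 2}\cap K^\times)/K^{\times 2}$. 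The main obstacle is upgrading this $K$-point identification to a scheme-theoretic one; I would handle it by re-running the orbit parametrization after base change along any $K$-algebra $K'$, which is legitimate since Theorem~\ref{orbit2} is merely the Dedekind specialization of Wood's general scheme-theoretic construction, and then invoking Yoneda to conclude that the stabilizer functor is represented by $(\Res_{L/K}\mu_2)_{N=1}/\mu_2$.
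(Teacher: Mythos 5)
Your proposal is correct and follows the paper's own route: the paper obtains this corollary precisely by specializing Theorem \ref{orbit2} and Remark \ref{rmk:maximal} to the case $D=K$ (where $R=L$ since $f_0\in K^\times$, every fractional ideal is principal, and $K^{\times(2)}=K^\times$), which is exactly what you do. The only comment worth adding is that the scheme-theoretic stabilizer identification is available more directly and without your base-change/Yoneda step: since $\mu_2\subset\SL_n$ acts trivially on $K^2\otimes\Sym_2K^n$, the stabilizer subscheme of $(A,B)$ in $\SL_n/\mu_2$ is simply the quotient by $\mu_2$ of the stabilizer in $\SL_n$, which Corollary \ref{cor:orbitSL} already identifies as the \'etale group scheme $(\Res_{L/K}\mu_2)_{N=1}$ -- this also sidesteps the delicate point that the right-hand map in the sequence \eqref{eq:maximalstab} need not be onto all of $(L^{\times2}\cap K^\times)/K^{\times2}$ when $n\equiv 2\pmod 4$.
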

It follows from Corollary \ref{cor:orbitSLmu2} that the set of $(\SL_n/\mu_2)(K)$-orbits is either in bijection with or has a 2-to-1 map to $(L^\times/(L^{\times2}K^\times))_{N=f_0}$, in accordance with whether $f(x,y)$ has an odd factorization over $K$ or not, respectively. Here an {\it odd factorization} of $f(x,y)$ over $K$ is a factorization of the form $f(x,y)=g(x,y){h(x,y)}$, where $g$ and $h$ are odd degree binary forms that are either $K$-rational or are conjugate over some quadratic extension of $K$. Meanwhile, the elements of the stabilizer correspond to even factorizations of $f(x,y)$. When $n$ is congruent to $2$ modulo~$4$, an even factorization of $f(x,y)$ must be of the form $g(x,y)h(x,y)$ where both $g$ and $h$ are $K$-rational even degree binary forms. In other words, they already appear in the stabilizers in $\SL_n(K)$. When $n$ is congruent to $0$ modulo~$4$, $f(x,y)$ can have even factorizations into conjugate binary forms over some quadratic extensions $K'/K$. The image of a stabilizer element corresponding to such a factorization in $(L^{\times2}\cap K^\times)/K^{\times2}$ is the class corresponding to the quadratic extension $K'$.

\section{Hyperelliptic curves, divisor classes, and generalized Jacobians}\label{hyp}

Assume from now on that $n \geq 2$ is even and write $n = 2g + 2$. Fix a field $K$ of characteristic not 2. In order to interpret the orbits for $\SL_n(K)$ and $(\SL_n/\mu_2)(K)$ having a fixed invariant binary form, we first review some of the arithmetic and geometry of hyperelliptic curves of genus $g$ over $K$. As in \cite{GHanoi}, we define a hyperelliptic curve over $K$ as a smooth, projective curve over $K$ with a $2$-to-$1$ map to the projective line over $K$, although we now treat the general case (without assuming any fixed $K$-rational points at infinity.)

Let $f(x,y) = f_0x^{2g+2} + f_1x^{2g+1}y + \cdots + f_{2g+2}y^{2g+2}$ be a binary form of degree $2g+2$ over $K$, with $\Delta \neq 0$ and $f_0 \neq 0$. We associate to $f(x,y)$ the hyperelliptic curve $C$ over $K$ with equation
$$z^2 = f(x,y).$$
This defines a smooth curve of genus $g$, as a hypersurface of degree $2g+2$ in the weighted projective plane $\mathbb P(1,1,g+1)$. The weighted projective plane embeds as a surface in $\mathbb P^{g+2}$ via the map $(x,y,z) \rightarrow (x^{g+1},x^gy,\ldots,y^{g+1},z)$. The image is a cone over the rational normal curve in $\mathbb P^{g+1}$, which has a singularity at the vertex $(0,0, \ldots, 1)$ when $g \geq 1$. The curve $C$ is the intersection of this surface with a quadric hypersurface that does not pass through the vertex of the cone. Finally, the linear series on $C$ of projective dimension $g+2$ and degree $2g+2$ that gives this embedding is the sum of the all the Weierstrass points (i.e., points with $z = 0$).

There are two points $P = (1,0, z_0)$ and $P' = (1,0,-z_0)$ at infinity, where $z_0^2 = f_0$. If $f_0$ is a square in $K^\times$, then these points are rational over $K$. If not, then they are  rational over the quadratic extension $K' = K(\sqrt{f_0})$.  Let $w$ be the rational
function $z/y^{g+1}$ on $C$, and let $t$ be the rational function $x/y$ on $C$. Both are regular outside of the two points $P$ and $P'$ with $y = 0$, where they have poles of order $g+1$ and $1$ respectively. The field of rational functions on $C$ is given by $K(C) = K(t,w)$, with $w^2 = f(t,1) = f_0t^{2g+2} + f_1t^{2g+1}+\cdots + f_{2g+2}$, and the subring of functions that are regular outside of $P$ and $P'$ is $K[t,w] = K[t,\sqrt{f(t,1)}]$ \cite{GHanoi}.

Let $\frak m$ be the modulus $\frak m = P + P'$ on $C$ and let $C_{\frak m}$ be the singular curve constructed from $C$ and this modulus in \cite[Ch.\ IV, no.\ 4]{S4}. Then $C_{\frak m}$ has equation
$$z^2 = f(x,y)y^2$$
of degree $2g+4$ in $\mathbb P(1,1,g+2)$. This defines a singular, projective curve of arithmetic genus $g+1$ whose normalization is $C$. There is now a single point $Q = (1,0,0)$ at infinity, which is an ordinary double point whose tangents are rational over the quadratic extension field $K'$.

Let $\Pic_{C/K}$ and $\Pic_{C_{\frak m}/K}$ denote the Picard functors of the projective curves $C$ and $C_{\frak m}$ respectively. These are represented by commutative group schemes over $K$, whose component groups are both isomorphic to $\mathbb Z$. Let $K^s$ be a fixed separable closure of $K$ and let $E$ be any extension of $K$ contained in $K^s$. The $E$-rational points of $\Pic_{C/K}$ correspond bijectively to the divisor classes on $C$ over the separable closure $K^s$ that are fixed by the Galois group $\Gal(K^s/E)$. When the curve $C$ has no $E$-rational points, an $E$-rational divisor class on $C$ may not be represented by an $E$-rational divisor. The subgroup of classes in $\Pic_{C/K}(E)$ that are represented by $E$-rational divisors is just the image of $\Pic(C/E) = H^1(C/E,\mathbb G_m)$ in $H^0(E, H^1(C/K^s, \mathbb G_m))$, under the map induced by the spectral sequence for the morphism $C/E \rightarrow \Spec E$. From this spectral sequence, we also obtain an injection from the quotient group to the Brauer group of $K$ (cf.\ \cite[\S2.3]{Sk}, \cite[Ch.~8]{BLR}):
$$\Pic_{C/K}(K)/\Pic(C/K) \rightarrow H^2(K, \mathbb G_m) = \Br(K).$$
Since $C$ has a rational point over the quadratic extension $K' = K(\sqrt{f_0})$, the image of this injection is contained in the subgroup $\Br(K'/K) = K^\times/N(K'^\TIMES)$. Every class in $\Br(K'/K)$ corresponds to a quaternion algebra $D$ over $K$ that is split by $K'$, or equivalently, to a curve of genus zero over $K$ with two conjugate points rational over $K'$.

\begin{proposition} \label{pic}
If a hyperelliptic curve $C$ over $K$ has a rational divisor of odd degree, or equivalently a rational point over an extension of $K$ of odd degree, then every $K$-rational divisor class is represented by a $K$-rational divisor. If $K$ is a global field and $\Div^1(C)$ is locally soluble, then every $K$-rational divisor class is represented by a $K$-rational divisor.
\end{proposition}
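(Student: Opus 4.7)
The plan is to exploit the injection
$$
\Pic_{C/K}(K)/\Pic(C/K)\longhookrightarrow \Br(K)
$$
coming from the Leray spectral sequence for $C\to\Spec K$, highlighted in the paragraph preceding the statement. Two features of this map that are already noted in the text will be crucial. First, its image lies in the kernel of the pullback $\Br(K)\to\Br(C)$, since the five-term sequence in fact continues as $\Pic_{C/K}(K)\to\Br(K)\to\Br(C)$. Second, because $C$ acquires a rational point over the quadratic extension $K'=K(\sqrt{f_0})$, the image is contained in $\Br(K'/K)$ and is therefore $2$-torsion (and in fact zero when $f_0\in K^{\times 2}$). For any $K$-rational divisor class $[D]$, write $\delta([D])\in\Br(K)$ for its image; the goal in each assertion is to show $\delta([D])=0$.

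For the first assertion, suppose $C$ admits a $K$-rational divisor $D_0=\sum n_i[P_i]$ of odd degree. Since $\deg D_0=\sum n_i[\kappa(P_i):K]$ is odd, at least one residue field $L=\kappa(P_i)$ must have odd degree $d=[L:K]$; conversely, a closed point of odd residue degree yields such a divisor, which gives the asserted equivalence with the existence of a rational point over an odd-degree extension. The morphism $\Spec L\to C$ induces a restriction $\Br(C)\to\Br(L)$, and since $\delta([D])$ maps to zero in $\Br(C)$, its further image in $\Br(L)$ also vanishes. Applying the corestriction $\Br(L)\to\Br(K)$ then yields $d\cdot \delta([D])=0$. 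Combined with $2\cdot \delta([D])=0$ and $\gcd(d,2)=1$, this forces $\delta([D])=0$, so $[D]$ is represented by a $K$-rational divisor.

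For the second assertion, let $K$ be a global field with $\Div^1(C)$ locally soluble, so that for every place $\nu$ the curve $C_{K_\nu}$ carries a $K_\nu$-rational divisor of degree $1$, which is in particular of odd degree. The obstruction $\delta$ is functorial under base change, so applying the first assertion over each $K_\nu$ shows that the localization of $\delta([D])$ vanishes in $\Br(K_\nu)$ for every $\nu$. The Hasse--Brauer--Noether theorem states that $\Br(K)\to\bigoplus_\nu \Br(K_\nu)$ is injective, and we conclude $\delta([D])=0$.

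The main obstacle is not analytic but bookkeeping: one must verify the functoriality of the boundary map $\delta$ under both base change $K\to K_\nu$ and restriction to the residue field $L$, and invoke the corestriction identity $\mathrm{Cor}\circ \mathrm{Res}=[L:K]$ on $\Br$. These are routine consequences of the Leray spectral sequence and of Galois cohomology, but once granted, the argument reduces to the coprimality of $2$ with an odd residue degree together with the Hasse principle for the Brauer group.
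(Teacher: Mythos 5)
Your proof is correct and follows essentially the same route as the paper, which disposes of the proposition in two sentences by noting that the obstruction lives in $\Br(K'/K)$ (hence is a $2$-torsion quaternion class) and that such a class split by an odd-degree extension, or split everywhere locally over a global field, is already trivial. Your restriction--corestriction argument and appeal to Hasse--Brauer--Noether are precisely the standard facts the paper is invoking implicitly; you have simply written out the details.
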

Indeed, a quaternion algebra split by an odd degree extension of $K$ is already split over $K$. Similarly, a quaternion algebra over a global field that splits locally everywhere is split globally.

\medskip

The distinction between $K$-rational divisor classes and $K$-rational divisors does not arise for the curve $C_{\frak m}$, which always have the $K$-rational singular point $Q$. Hence the points of $\Pic_{C_{\frak m}/K}$ over $E$ correspond to the classes of divisors that are rational over $E$ and are prime to $\frak m$, modulo the divisors of functions with $f \equiv 1$ modulo $\frak m$.
We have an exact sequence of smooth group schemes over $K:$
\begin{equation}\label{eq:PicCm}
0 \rightarrow T \rightarrow \Pic_{C_{\frak m}/K} \rightarrow \Pic_{C/K} \rightarrow 0,
\end{equation}
where $T$ is the one-dimensional torus that is split by $K'$.
Taking the long exact sequence in Galois cohomology, and noting that the image of $\Pic_{C_{\frak m}/K}(K)$ in $\Pic_{C/K}(K)$ is precisely the subgroup $\Pic(C/K) = H^1(C/K,\mathbb G_m)$ represented by $K$-rational divisors, we recover the
injection $$\Pic_{C/K}(K)/\Pic(C/K) \rightarrow H^1(K,T) = K^\times/N(K'^\TIMES) = \Br(K'/K).$$ To see this geometrically, note that the fiber over a $K$-rational point $P$ of $\Pic_{C/K}$ is a principal homogeneous space for $T$ over $K$, which is a curve of genus zero with two conjugate points over $K'$ removed. This curve of genus zero determines the image of $P$ in $\Br(K'/K)$.

The connected components of the identity of the Picard schemes $J = \Pic^0_{C/K}$ and $J_{\frak m} = \Pic^0_{C_{\frak m}/K}$ are the Jacobian and generalized Jacobian of \cite[Ch.\ V]{S4}. They correspond to the divisor classes of degree zero on these curves. The exact sequence in \eqref{eq:PicCm} restricts to the following exact sequence \cite[Ch.\ V, \S3]{S4}:
\begin{equation}\label{eq:TJmJ}
0 \rightarrow T \rightarrow J_{\frak m} \rightarrow J \rightarrow 0.
\end{equation}

There is a line bundle of degree $2$ on $C_{\frak m}$ (and hence on $C$) which is the pull-back of the line bundle $\mathcal O(1)$ from the projective line under the map $(x,y,z) \to (x,y)$. This is represented by the $K$-rational divisor $d = (R) + (R')$ prime to $\frak m$ consisting of the two points above a point $(x_0,y_0)$ on the projective line, whenever $y_0$ is nonzero. The quotient groups
$\Pic_{C/K}/\mathbb Z \cdot d = J \sqcup J^1$ and $\Pic_{C_{\frak m}/K}/\mathbb Z \cdot d = J_{\frak m} \sqcup J_{\frak m}^1$ both have two connected components, represented by the divisor classes of degree $0$ and~$1$. There are morphisms
\begin{eqnarray*}
C &\longrightarrow& J^1\\
C - \{P,P'\} &\longrightarrow& J_{\frak m}^1
\end{eqnarray*}
defined over $K$, which take a point to the corresponding divisor class of degree $1$ \cite[Ch V, \S4]{S4}.

\begin{proposition} \label{prop:torsion} Let $f(x,y)=f_0x^{2g+2}+f_1x^{2g+1}y+\cdots+f_{2g+2}y^{2g+2}$ be a binary form with nonzero discriminant and nonzero $f_0$. Let $C:z^2=f(x,y)$ and $C_{\frak m}:z^2=f(x,y)y^2$ denote the associated hyperelliptic curve and singular curve with Jacobian $J$ and generalized Jacobian $J_{\frak m}$. Let $L=K[x]/f(x,1)$ denote the corresponding \'{e}tale algebra of rank $2g+2$. Then:
\begin{enumerate}
\item[$1.$] The $2$-torsion subgroup $J_{\frak m}[2]$ of $J_{\frak m}$ is isomorphic to the group scheme  \,$(\Rlk\mu_2)_{N=1}$.\, Its $K$-rational points correspond to the even degree factors of $f(x,y)$ over $K$.
\item[$2.$] The $2$-torsion subgroup $\,J[2]\,$ of $\,J\,$ is isomorphic to the group scheme $(\Rlk\mu_2)_{N=1}/\mu_2$.
Its $K$-rational points correspond to the even factorizations of $f(x,y)$ over $K$.
\item[$3.$] The $2$-torsion $W_{\frak m}[2]$ in the component $J^1_{\frak m}$ of $\Pic_{C_{\frak m}/K}/\mathbb Z \cdot d = J_{\frak m} \sqcup J^1_{\frak m}$, is a torsor for $J_{\frak m}[2]$~whose $K$-rational points correspond to the odd degree factors of $f(x,y)$ over $K$.
\item[$4.$] The $2$-torsion $W[2]$ in the component $J^1$ of $\Pic_{C/K}/\mathbb Z \cdot d = J \sqcup J^1$ is a torsor for $J[2]$ whose $K$-rational points correspond to the odd factorizations of $f(x,y)$ over $K$.
\end{enumerate}
\end{proposition}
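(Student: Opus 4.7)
The plan is to describe all four two-torsion structures via subsets of the Weierstrass points of $C$ over $\bar K$, match them to the claimed group schemes, and then descend to $K$-rational points by Galois stability. Let $w_1,\dots,w_{2g+2}\in C(\bar K)$ denote the Weierstrass points (the zeros of $z$); they are in bijection with the roots of $f(x,1)$ in $\bar K$ and provide an identification $L\otimes_K\bar K\cong\bar K^{2g+2}$ indexed by the $w_i$. Under this identification, $(\Rlk\mu_2)(\bar K)=\{\pm1\}^{2g+2}$; its norm-one subgroup consists of tuples with an even number of $-1$'s, in bijection with even subsets $S\subseteq\{1,\dots,2g+2\}$; and further quotienting by the diagonal $\mu_2$ identifies $S$ with $S^c$.

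For Parts~1 and 3, I will exhibit explicit two-torsion classes. Writing $t=x/y$, the identity ${\rm div}_C(t-w_i)=2(w_i)-d$ shows that for any $S$, the class $[D_S]:=[\sum_{i\in S}(w_i)]-\lfloor|S|/2\rfloor[d]\in\Pic_{C_{\frak m}/K}/\mathbb{Z}\cdot d$ is killed by $2$, and lies in $J_{\frak m}$ if $|S|$ is even, in $J_{\frak m}^1$ if $|S|$ is odd. To see these exhaust $J_{\frak m}[2]$, I apply the snake lemma to multiplication-by-$2$ on (\ref{eq:TJmJ}); since $T(\bar K)=\bar K^\times$ is $2$-divisible, this yields $0\to T[2]\to J_{\frak m}[2]\to J[2]\to 0$ over $\bar K$. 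Combined with the classical count $|J[2](\bar K)|=2^{2g}$, this forces $|J_{\frak m}[2](\bar K)|=2^{2g+1}$, exactly the number of even subsets of $\{1,\dots,2g+2\}$. The rule sending $[D_S]$ to the sign vector $(s_i)$ with $s_i=-1$ iff $i\in S$ is then a Galois-equivariant bijection $J_{\frak m}[2](\bar K)\cong(\Rlk\mu_2)_{N=1}(\bar K)$ (both sides carry the same permutation action on indices), and the same formulas with odd $|S|$ realize $W_{\frak m}[2]$ as a torsor under $J_{\frak m}[2]$.

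For Parts~2 and 4, I push down from $J_{\frak m}$ to $J$. The function $w=z/y^{g+1}$ has divisor $\sum_i(w_i)-(g+1)d$, so $[D_S]=[D_{S^c}]$ already in $\Pic_{C/K}$; under the sign-vector correspondence this identification is precisely the quotient by the diagonal $\mu_2$, yielding $J[2]\cong(\Rlk\mu_2)_{N=1}/\mu_2$ and realizing $W[2]$ as the corresponding torsor. It remains to identify $K$-rational points, which I do by Galois descent: a $K$-point of $(\Rlk\mu_2)_{N=1}$ is a Galois-stable even subset of roots, i.e.\ a union of Galois orbits of roots, i.e.\ an even-degree $K$-rational factor of $f(x,y)$; after quotienting by $\mu_2$, a $K$-point corresponds instead to a Galois-stable pair $\{S,S^c\}$, equivalently an even factorization $f=gh$ as defined in the paper (Galois either fixes $g,h$ individually or swaps them, in which case $g$ and $h$ are conjugate over a quadratic extension). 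The odd-degree/odd-factorization cases are verbatim. The main technical point throughout is to keep straight the two-sheeted "tower" $J_{\frak m}\to J$ against its counterpart $\mu_2\hookrightarrow(\Rlk\mu_2)_{N=1}$; once the $\bar K$-level bijection is pinned down, the upgrade to an isomorphism of finite \'etale $K$-group schemes is automatic from matching Galois actions.
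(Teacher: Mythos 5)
Your proof is correct and follows essentially the same route as the paper's: it represents two-torsion by classes of divisors supported on the Weierstrass points, identifies even subsets of the roots with $(\Rlk\mu_2)_{N=1}$ via the Galois permutation action, imposes the single extra relation $\mathrm{div}(w)=\sum_i(w_i)-(g+1)d$ on the quotient $J$, and reads off $K$-points of the quotient group scheme and of the torsors by Galois descent. You fill in more detail than the paper (which cites \cite{G} for the divisor representatives), notably the explicit principal divisors and the count of $J_{\frak m}[2](\bar K)$ via the snake lemma applied to $0\to T\to J_{\frak m}\to J\to 0$; the only quibble is the notation $t-w_i$, which should read $t-x_i$ with $x_i$ the root of $f(x,1)$ corresponding to $w_i$.
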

\noindent Here an odd (resp.\ even) factorization of $f(x,y)$ over $K$ is a factorization of the form $f=gh$, where $g$ and $h$ are odd (resp.\ even) degree binary forms that are either defined over $K$ or are conjugate over some quadratic extension of $K$. Note that giving a factor of $f(x,y)$ is the same as giving a subset of Weierstrass points---hence the choice of the letter ``$W$'' in $W[2]$ and $W_{\frak m}[2]$.

\medskip

\begin{proof} To prove the proposition, we observe that the $2$-torsion points of $J_{\frak m}$ over the separable closure $K^s$ are represented by the classes of divisors of the form $(P_1) + (P_2) + \cdots + (P_{2m}) - md$, where
each $P_i = (x_i,1,0)$ comes from a distinct root $x_i$ of $f(x,1)$ \cite[\S4]{G}. Hence the points of $J_{\frak m}[2]$ over $K^s$ correspond bijectively to the factors of even degree of $f(x,y)$ over $K^s$. Since the Galois group acts by permutation of the roots, we have a canonical isomorphism
$J_{\frak m}[2] \simeq (\Rlk\mu_2)_{N=1}.$
On the quotient $J$, there is a single relation: $(P_1) + (P_2) + \cdots + (P_{2g+2}) - (g+1)d =  {\rm div} (y) \equiv 0$, so
$J[2] \simeq (\Rlk\mu_2)_{N=1}/\mu_2.$ The last two statements of Proposition~\ref{prop:torsion} follow similarly.
\end{proof}

\medskip

Finally, we note that the Weil pairing $J[2] \times J[2] \rightarrow \mu_2$ gives the self-duality of the finite group scheme $(\Rlk\mu_2)_{N=1}/\mu_2$, and the connecting homomorphism
$H^1(K, J[2]) \rightarrow H^2(K,\mu_2)$ whose kernel is the image of $H^1(K,J_{\frak m}[2])$ is cup product with the class of $W[2]$ (see \cite[Proposition 10.3]{PSh}).

\section{Generic pencils of quadrics}\label{generic}

In this section, we relate hyperelliptic curves to pencils of quadrics. In particular, we will see how pencils of quadrics yield two-covers of $J^1$ for certain hyperelliptic curves.

Let $W=K^n$ be a vector space of dimension $n \geq 3$ over $K$ and let $A$ and $B$ be two symmetric bilinear forms on $W$.
Let $Q_A$ and $Q_B$ be the corresponding quadric hypersurfaces
in $\mathbb P(W)$, so $Q_A$ is defined by the equation $\langle w,w \rangle_A = 0$ and $Q_B$ is defined by the equation
$\langle w,w \rangle_B = 0$. Let $Y$ be the base locus of the pencil spanned by $A$ and $B$, which is defined by the equations
$\langle w,w \rangle_A  = \langle w,w \rangle_B = 0$ in $\mathbb P(W)$. Then $Y$ has dimension $n - 3$ and
is a smooth complete intersection if and only if the discriminant
of the pencil  $\disc(xA - yB) = f(x,y)$ has $\Delta(f) \neq 0$. In this case we say that the pencil spanned
by $A$ and $B$ is \emph{generic}. In this section, we will only consider generic pencils.
The Fano scheme $F = F(A,B)$ is the Hilbert scheme of maximal linear subspaces of $\mathbb P(W)$ that are contained in $Y$.

When $n = 2g+1$ is
odd, the Fano scheme has dimension zero and is a principal homogeneous space for the finite group scheme $\Rlk\mu_2/\mu_2 \simeq (\Rlk\mu_2)_{N=1}$. Here $L$ is the \'etale
algebra of rank~$2g+1$ determined by the binary form $f(x,y)$. The $2^{2g}$ points of $F$ over the separable closure of $K$
correspond to the subspaces
$Z$ of $W$ of dimension $g$ that are isotropic for all the quadrics in the pencil, and the scheme $F$ depends only on the $\SL_n(K)$-orbit of the
pair $(A,B)$.

When $n = 2g + 2$ is even, the Fano scheme $F$ is smooth and geometrically connected of dimension $g$, and is a principal homogeneous space for
the Jacobian $J$ of the smooth hyperelliptic curve $C$ with equation $z^2 = f(x,y)$. A point of $F$ corresponds to a subspace $Z$ of $W$ of dimension~$g$ that is isotropic for all of the quadrics in the pencil, whereas a point of $C$ corresponds to a quadric in the pencil plus a choice of one of the two rulings of that quadric. This interpretation can be used to define a morphism
$C \times F \rightarrow F$ over $K$, which in turn gives a simply transitive action of $J$ on $F$. In this case, the Fano variety $F$ depends only on the $(\SL_n/\mu_2)(K)$-orbit of the pair $(A,B)$. Proofs
of all assertions on the Fano scheme can be found in \cite {W}.

\begin{theorem}\label{thm:pencil}{\em \!(\cite[Theorem 2.7]{W})}
Let $F$ be the Fano variety of maximal linear subspaces contained in the base locus of a generic pencil of quadrics generated by symmetric bilinear forms $(A,B)\in K^2\otimes\Sym_2K^n.$ Let $f(x,y)$ denote the invariant binary form of $(A,B)$. Let $C:z^2=f(x,y)$ denote the corresponding hyperelliptic curve with Jacobian $J$. Then the disconnected variety
\begin{equation}\label{eq:pencilgeneric}
X := J \sqcup F \sqcup J^1\sqcup F
\end{equation}
has a commutative algebraic group structure over $K$. In particular, $[F]$ as a class in $H^1(K,J)$ is \emph{4}-torsion and $2[F]=[J^1].$
\end{theorem}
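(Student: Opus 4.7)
My plan is to realize $X = J \sqcup F \sqcup J^1 \sqcup F$ as a commutative algebraic group over $K$ with $J$ as its identity component; once that is established, the claims about $[F] \in H^1(K,J)$ follow directly by reading off the component group of $X$ and passing from each component to its class as a $J$-torsor.

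The first step is to assemble all the binary operations on $X$. The commutative algebraic group $\Pic_{C/K}/\Z \cdot d = J \sqcup J^1$ already provides the additions $J + J$, $J + J^1$, and $J^1 + J^1$, while the principal homogeneous space structure on $F$ supplies $J + F$. The essential new ingredient is the morphism $C \times F \to F$ constructed in \cite[\S2]{W}, which sends a pair $(p, Z)$ to the unique maximal isotropic $g$-plane in the chosen ruling of the quadric $Q_p$ meeting $Z$ in codimension one. Composing with the map $C \to J^1$ and using that $C$ generates $J^1$ as a torsor for $J$, this extends to an action $J^1 \times F \to F$ that swaps the two copies of $F$ inside $X$. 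Finally, the remaining operations $F \times F \to J$ and $F \times F \to J^1$ are given by the torsor difference map (when pairing the same copy of $F$ with itself) and by its $J^1$-twist (when pairing opposite copies).

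The main technical obstacle is verifying that these candidate operations fit together into a commutative associative group law, with compatible identity and inversion. I would reduce the associativity check to geometric points over $K^s$, where each operation has an explicit description in terms of divisor classes and isotropic subspaces. Since $C$ generates $J^1$ as a $J$-torsor, any algebraic identity on $J^1 \times F \times F$ is determined by its restriction to $C \times F \times F$, where the verification reduces to a manipulation of rulings of quadrics in the pencil; commutativity is automatic from the commutativity of $J$ and $J^1$. All of the operations are algebraic morphisms of schemes over $K$ by construction, so the group law is algebraic.

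Once $X$ is established as a commutative algebraic group with $J$ as its identity component, the component group $\pi_0(X)$ has order $4$. Inspection of the operations above shows that two elements lying in the same copy of $F$ sum to an element of $J^1$, so $\pi_0(X)$ is cyclic of order $4$ with generator $[F]$ and with $2[F] = [J^1]$. Interpreting each component of $X$ as a $J$-torsor and translating the relations in $\pi_0(X)$ into equalities of cohomology classes yields $2[F] = [J^1]$ in $H^1(K,J)$. Since $[J^1]$ is itself $2$-torsion (as $2J^1 = J^2$ contains the $K$-rational hyperelliptic divisor class $d$), we obtain $4[F] = 2[J^1] = 0$, which completes the proof.
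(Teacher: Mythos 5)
The paper does not actually prove this statement: it is quoted as \cite[Theorem 2.7]{W}, and only the consequence $2[F]=[J^1]$ is extracted afterwards by applying multiplication by $2$ in the group $X$. Your deduction of the cohomological consequences from the existence of the group structure is fine and matches what the paper does. The problem is with your construction of the group structure itself, where there is a genuine gap at exactly the point that carries the content of the theorem.

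The crux is the pairing of two points of $F$. You propose to define $F\times F\to J$ by the torsor difference map ``when pairing the same copy of $F$ with itself'' and $F\times F\to J^1$ by ``its $J^1$-twist'' for opposite copies. First, this is internally inconsistent with your later assertion that two elements of the same copy of $F$ sum to an element of $J^1$ (which is what you need for $\pi_0(X)\cong\Z/4$ and $2[F]=[J^1]$; if the same-copy sum landed in $J$ you would get $\pi_0(X)\cong\Z/2\times\Z/2$ and $2[F]=0$). Second, and more seriously, ``its $J^1$-twist'' is not a construction: a morphism $F\times F\to J^1$ satisfying $(x+a)+(y+b)=(x+y)+(a+b)$ for $a,b\in J$ exists if and only if $[F]+[F]=[J^1]$ in $H^1(K,J)$, which is precisely the statement being proven. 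The only canonical pairing available on an abstract torsor is the difference map into $J$; producing a bi-equivariant sum into $J^1$ requires genuine input from the geometry of the pencil (in \cite{W} this comes from an explicit correspondence assigning to a pair of maximal isotropic subspaces a divisor class of odd degree on $C$). The same issue infects your extension of $C\times F\to F$ to an action of $J^1$: you must show the map factors through rational equivalence of degree-one divisors, which is again a geometric fact, not a formal one. As written, the proposal assumes the key pairing rather than constructing it.
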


The group $X$ contains the subgroup $\Pic_{C/K}/\mathbb Z \cdot d = J \sqcup J^1$ with index two. Let $F[4]$ be the principal homogeneous space for $J[4]$ consisting of the points of $F$ of (minimal) order $4$ in the group~$X$. Multiplication by $2$ in $X$ gives finite \'etale covers
$$F \rightarrow J^1$$
$$F[4] \rightarrow W[2]$$
of degree $2^{2g}$ with an action of the group scheme $J[2]$. This shows that the class $[F]$ of the principal homogeneous space $F$ satisfies $2[F] = [J^1]$ in the group $H^1(K,J)$. Similarly, the class of $W[2]$ in $H^1(K,J[2])$ is the image of the class $F[4]$ in $H^1(K,J[4])$ under the map $m_2: H^1(K,J[4]) \rightarrow H^1(K,J[2])$ induced by the multiplication by $2$ map from $J[4]$ to $J[2]$. In general, if an element $[F']\in H^1(K,J[2])$ is in the image of $m_2$, we say $[F']$ is divisible by $2$ in $H^1(K,J[4])$.

Consequently, a necessary condition on the existence of a pencil $(A,B)$  over $K$ with discriminant curve $C$ is that the class of $J^1$ and the class of $W[2]$ should be divisible by $2$ in $H^1(K,J)$ and $H^1(K,J[4])$ respectively. However, this condition is not sufficient. Consider the curve $C$ of genus zero with equation $z^2 = -x^2 - y^2$ over $\mathbb R$. In this case, both $J$ and $J[2]$ reduce to a single point, so any homogeneous space for $J$ or $J[2]$ is trivial, and hence divisible by $2$. On the other hand, since $L = \mathbb C$ and $f_0 = -1$ is not a norm, by Corollary~\ref{cor:orbitSL} (or \ref{cor:orbitSLmu2}) there are no pencils over $\mathbb R$ with discriminant $f(x,y) = -x^2 - y^2$. To obtain a geometric condition that is both necessary and sufficient for the existence of a pencil, we will have to consider non-generic pencils whose invariant binary form defines the singular curve $C_{\frak m}$. This is the object of the next section.

\section{Regular pencils of quadrics}\label{regular}

In this section, we give a list of equivalent conditions for the existence of a pencil over $K$ whose discriminant is some given binary form $f(x,y)$. In particular, we prove Theorem \ref{genJ}.

Let $(A,B)$ generate a generic pencil of bilinear forms on a vector space $W$ of even dimension $n = 2g+2$ over $K$, and let $f(x,y) = \disc(xA - yB)$ be its invariant binary form of degree $2g+2$ and discriminant $\Delta(f) \neq 0$. We continue to assume that $f_0 = \disc(A)$ is also nonzero in $K$. Let $(A',B')$ be a pair of bilinear forms on the vector space $W' = W \oplus K^2$ of dimension $n + 2 = 2g + 4$, where $A'$ is the direct sum of $A$ and the rank one form $\langle(a,b),(a',b')\rangle = aa'$ on $K^2$ and $B'$ is the direct sum of $B$ and the split form $\langle(a,b),(a',b')\rangle = ab' + a'b$ of rank $2$. The invariant binary form of this pencil
$$\disc(xA' - yB') = f(x,y)y^2$$
then has a double zero at $(x,y) = (1,0)$, and the pencil is not generic. The base locus defined by the equations $Q_{A'} = Q_{B'} = 0$ in $\mathbb P(W \oplus K^2)$ has an ordinary double point at the unique singular point $R = (0_W; 0,1)$ of the quadric $Q_{A'}$. There are exactly $2g+3$ singular quadrics in the pencil and all of them are simple cones. The $K$-algebra $L'$ associated to the pencil is not \'etale, but is isomorphic to $L \oplus K[y]/y^2$. Even though $L'$ is not \'etale, the vector space $W'$ is a free $L'$-module of rank 1, so the pencil is regular in the sense of \cite[\S3]{W}. Since the norms from $K[y]/y^2$ to $K$ are precisely the squares in $K$, we have an equality of quotient groups $K^\times/(K^{\times2}N(L^\TIMES)) = K^\times/(K^{\times2}N(L'^\times))$.

The Fano scheme $F_\frak {m}$ of this pencil consists of the subspaces $Z$ of dimension $g+1$ in $W \oplus K^2$ that are isotropic for all of the quadrics in the pencil and do not contain the unique line that is the radical of the form $A'$ (so the projective space $\mathbb P(Z)$, which is contained in the base locus, does not meet the unique double point $R$). The Fano scheme is a smooth variety of dimension $g+1$. However, in this case $F_{\frak m}$ is not projective. It is a principal homogeneous space for the generalized Jacobian $J_{\frak m}$ associated to the singular curve $C_{\frak m}$ of arithmetic genus $g +1$ and equation $z^2 = f(x,y)y^2$ in weighted projective space.

For example, when $g = 0$, the curve $C$ is the non-singular quadric $z^2 = ax^2 + bxy + cy^2$ in $\mathbb P^2$, with $a = f_0$ and $b^2 - 4ac = \Delta(f)$ both nonzero in $K$.
The pencil $(A',B')$ has discriminant $f'(x,y) = ax^2y^2 + bxy^3 + cy^4$. Its base locus $D$ in $\mathbb P^3$ is isomorphic to a singular curve of arithmetic genus one, with a single node $R$ whose tangents are rational over the quadratic extension $K' = K(\sqrt{f_0})$. The Fano variety $F_{\frak m}$ in this case is just the affine curve $D - \{R\}$, and $J_{\frak m}^1$ is the affine curve $C_{\frak m} - \{Q\} = C - \{P,P'\}$. Both are principal homogeneous spaces for the one-dimensional torus $T = J_{\frak m}$ which is split by $K'$. We shall see that there is an unramified double cover $F_{\frak m} \rightarrow J_{\frak m}^1$ that extends to a double cover of complete curves of genus zero $M \rightarrow C$ which is ramified at $P$ and $P'$.

Since the pencil is regular and its associated hyperelliptic curve has only nodal singularities, we again obtain a commutative algebraic group
\begin{equation}\label{eq:pencilregular}
X_{\frak m} = J_{\frak m} \sqcup F_{\frak m} \sqcup J_{\frak m}^1\sqcup F_{\frak m}
\end{equation}
over $K$ with connected component $J_{\frak m}$ and component group $\mathbb Z/4$. The group $X_{\frak m}$ contains the algebraic group $\Pic_{C_{\frak m}/K}/\mathbb Z \cdot d = J_{\frak m} \sqcup J_{\frak m}^1$ with index two \cite[ \S 3.2]{W}. Just as in the generic case, multiplication by $2$ in the group $X_{\frak m}$ gives an unramified cover
$$F_{\frak m} \rightarrow J_{\frak m}^1$$
of degree $2^{2g+1}$ with an action of $J_{\frak m}[2]$, and shows that $2[F_{\frak m}] = [J_{\frak m}^1]$ in the group $H^1(K,J_{\frak m})$ of principal homogeneous spaces for $J_{\frak m}$. Hence a necessary condition for the existence of such a pencil $(A',B')$ is that the class of $J_{\frak m}^1$ is divisible by $2$. In this case, the necessary condition is also sufficient.

\begin{theorem}\label{thm:with8equiv}
Let $f(x,y) = f_0x^{2g+2} + f_1x^{2g+1}y + \cdots + f_{2g+2}y^{2g+2}$ be a binary form of degree $2g +2$ over $K$ with $\Delta (f)$ and $f_0$ both nonzero. Write $f(x,1) = f_0g(x)$ with $g(x)$ monic and separable. Let $L$ be the \'etale algebra $K[x]/ g(x)$ of degree n over $K$ and let $\beta$ denote the image of $x$ in $L$. Let $C$ be the smooth hyperelliptic curve of genus $g$ with equation $z^2 = f(x,y)$ and let $C_{\frak m}$ be the singular hyperelliptic curve of arithmetic genus $g+1$ with equation $z^2 = f(x,y)y^2$. Then the following conditions are all equivalent:\
\medskip

\noindent a. There is a generic pencil $(A,B)$ over $K$ with $\disc(xA - yB) = f(x,y)$.\
\medskip

\noindent b. There is a regular pencil $(A',B')$ over $K$ with $\disc(xA' -yB') = f(x,y)y^2$.\
\medskip

\noindent c. The coefficient $f_0$ lies in the subgroup $K^{\times2}N(L^\TIMES)$ of $K^\times$.\
\medskip

\noindent d. The class of the homogeneous space $J_{\frak m}^1$ is divisible by $2$ in the group $H^1(K,J_{\frak m})$.\
\medskip

\noindent e. The class of the homogeneous space $W_{\frak m}[2]$ is in the image of the map $m_2':H^1(K,J_{\frak m}[4]) \rightarrow H^1(K,J_{\frak m}[2])$ induced by the multiplication by $2$ map from $J_{\frak m}[4]$ to $J_{\frak m}[2]$.\
\medskip

\noindent f. There is an unramified two-cover of homogeneous spaces $F_{\frak m} \rightarrow J_{\frak m}^1$~~ for $J_{\frak m}$ over $K$.\
\medskip

\noindent g. The maximal unramified abelian cover $U \rightarrow C - \{P,P'\}$ of exponent $2$ over $K^s$ descends to $K$.\
\medskip

\noindent h. The maximal abelian cover $M \rightarrow C$ of exponent $2$ over $K^s$ that is ramified
only at the points $\{P,P'\}$ descends to $K$.\
\end{theorem}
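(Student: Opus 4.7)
The plan is to prove the equivalence of all eight conditions by arranging them into a cycle that passes through each, with the algebraic conditions (a), (b), (c) handled via the orbit parametrization of Section~\ref{dedorbits}, and the cohomological/geometric conditions (d)--(h) handled via the commutative algebraic group structure $X_{\frak m}$ of \eqref{eq:pencilregular}, the Kummer sequence, and standard descent arguments for \'etale covers.

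First I would establish (a) $\Leftrightarrow$ (c) directly from Corollary~\ref{cor:orbitSLmu2}: since $(\SL_n/\mu_2)(K)$-orbits with invariant form $f$ correspond to pairs $(\alpha,s)$ satisfying $N(\alpha) = s^2 f_0^{n-3}$, such an orbit exists iff $f_0^{n-3} \in K^{\times 2} N(L^\times)$; because $n-4 = 2g-2$ is even, $f_0^{n-3} \equiv f_0 \pmod{K^{\times 2}}$, so this is equivalent to $f_0 \in K^{\times 2} N(L^\times)$. Then (a) $\Rightarrow$ (b) is by the direct-sum construction sketched before the theorem, adjoining to $(A,B)$ the rank-one form $aa'$ and the split rank-two form $ab' + a'b$ on $K^2$. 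For (b) $\Rightarrow$ (c), I would apply the analog of Corollary~\ref{cor:orbitSLmu2} for regular pencils, exploiting the fact that the associated algebra $L' = L \oplus K[y]/y^2$ has $N(K[y]/y^2) \subset K^{\times 2}$, so $K^{\times 2} N(L'^\times) = K^{\times 2} N(L^\times)$ and the same norm condition on $f_0$ emerges.

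Next I would deduce (b) $\Rightarrow$ (f) from the commutative algebraic group $X_{\frak m}$ of \eqref{eq:pencilregular}: multiplication by $2$ on $X_{\frak m}$ sends the component $F_{\frak m}$ onto $J_{\frak m}^1$, producing an unramified degree-$2^{2g+1}$ two-cover. The equivalence (f) $\Leftrightarrow$ (d) is tautological, since a two-cover of $J_{\frak m}^1$ for $J_{\frak m}$ is by definition a torsor $F_{\frak m}$ with $2[F_{\frak m}] = [J_{\frak m}^1]$ in $H^1(K, J_{\frak m})$. The equivalence (d) $\Leftrightarrow$ (e) follows from the short exact sequence
\begin{equation*}
0 \to J_{\frak m}[2] \to J_{\frak m}[4] \xrightarrow{2} J_{\frak m}[2] \to 0
\end{equation*}
and a diagram chase comparing the induced map $H^1(K, J_{\frak m}[4]) \xrightarrow{2} H^1(K, J_{\frak m}[2])$ with the connecting map from the Kummer sequence $0 \to J_{\frak m}[2] \to J_{\frak m} \xrightarrow{2} J_{\frak m} \to 0$: the class $[W_{\frak m}[2]]$ lifts through multiplication by $2$ on the first sequence iff $[J_{\frak m}^1]$ is $2$-divisible in $H^1(K, J_{\frak m})$ on the second, by naturality of the connecting maps.

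For the covering-theoretic conditions, (f) $\Leftrightarrow$ (g) follows by pulling back the two-cover $F_{\frak m} \to J_{\frak m}^1$ along the Abel--Jacobi morphism $C - \{P,P'\} \to J_{\frak m}^1$ (defined over $K$ by \cite[Ch.~V, \S4]{S4}): the pullback has Galois group $J_{\frak m}[2]$ of order $2^{2g+1}$ and exhausts the maximal unramified abelian exponent-$2$ cover of $C - \{P,P'\}$, and any $K$-descent of this cover descends the underlying torsor. Finally, (g) $\Leftrightarrow$ (h) follows from the normalization bijection between finite \'etale exponent-$2$ covers of $C - \{P,P'\}$ and finite abelian exponent-$2$ covers of $C$ ramified only over $\{P,P'\}$, an operation that commutes with Galois descent. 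The main obstacle in this plan is the rigorous verification of (b) $\Rightarrow$ (c), since Corollary~\ref{cor:orbitSLmu2} is stated only for \'etale algebras (requiring $f_0 \neq 0$ \emph{and} $\Delta \neq 0$), whereas the regular pencil has invariant form $f(x,y)y^2$ with a repeated root at infinity and associated non-\'etale algebra $L'$; here one must invoke the regularity analysis of \cite[\S3]{W} to extend the orbit parametrization, the essential point being that the regularity hypothesis substitutes for \'etaleness sufficiently to run the same norm-theoretic argument on $f_0$.
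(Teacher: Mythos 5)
Your proposal establishes two internally equivalent blocks --- $\{a,b,c\}$ via the orbit parametrization, and $\{d,e,f,g,h\}$ via the group $X_{\frak m}$, the Kummer sequences, and covering-space arguments --- together with the forward implication $b\Rightarrow f$. But it contains no implication from any of $d$--$h$ back to any of $a$--$c$, so the cycle never closes and the eight conditions are not shown to be equivalent. This missing return arrow is precisely the substantive content of the theorem: divisibility of $[J^1_{\frak m}]$ by $2$ in $H^1(K,J_{\frak m})$ does not formally produce a pencil of quadrics (compare the discussion at the end of Section~\ref{generic}, where for $z^2=-x^2-y^2$ over $\R$ every torsor for $J$ is divisible by $2$ and yet no generic pencil exists --- the whole point of passing to $J_{\frak m}$ is that the analogous implication \emph{does} hold there, but this requires proof). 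The technical worry you flag at the end (extending the orbit parametrization to the non-\'etale algebra $L'$ for $b\Rightarrow c$) is comparatively minor; the real gap is structural.

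The paper closes the loop with the implication $h\Rightarrow c$, which is the only genuinely nontrivial step and is entirely absent from your plan. Concretely: if the ramified cover $M\rightarrow C$ descends to $K$, then since over $L$ one has the factorization $g(x)=(x-\beta)j(x)$, the algebra $L(M)$ must contain a square root $u$ of $\alpha(t-\beta)$ for some $\alpha\in L^\times$ (the quadratic subextensions of $K^s(M)/K^s(C)$ correspond to monic factorizations of $g$, and $t-\beta$ is an odd-degree factor over $L$, which is exactly where the ramification at $P,P'$ and hence the generalized Jacobian enter). Taking norms to $K(M)$ gives $N(u)^2=N(\alpha)g(t)$; since $N(u)$ and $w=z/y^{g+1}$ have the same divisor they agree up to a constant $b\in K^\times$, whence $f_0g(t)=w^2=b^2N(\alpha)g(t)$ and $f_0=b^2N(\alpha)\in K^{\times2}N(L^\times)$. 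You would need to add this argument (or an equivalent one, e.g.\ the obstruction-class identity $[f_0]=\delta[J^1_{\frak m}]$ of Theorem~\ref{thm:equalityobstruction}, which the paper proves separately and \emph{after} this theorem) for the proof to be complete. Your claimed converse $g\Rightarrow f$ is also asserted rather than proved, but even granting it, it only shuffles conditions within the second block and does not repair the gap.
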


Note that the maximal abelian covers above all have degree $2^{2g+1}$. The equivalence of conditions $a,d$, and $f$ proves Theorem~\ref{genJ}.

\medskip

\begin{proof}
$c\Leftrightarrow a \Rightarrow b$. We have already seen the equivalence of $a$ and $c$ in Corollary~\ref{cor:orbitSL}. The implication $a\Rightarrow b$ is obvious from the construction of the regular pencil $(A',B')$ from a generic pencil $(A,B)$ earlier in this section.

$b\Rightarrow d \Leftrightarrow e \Leftrightarrow f.$ When a regular pencil $(A',B')$ over $K$ with $\disc(xA' -yB') = f(x,y)y^2$ exists, the Fano variety $F_{\frak m}$ of the base locus of this pencil
provides a homogenous space for $J_{\frak m}$ whose class is a square root of the class of $J_{\frak m}^1$ in the group $H^1(K,J_{\frak m})$.
The equivalence of conditions $d,e$ and $f$ is clear.

$f\Rightarrow g\Rightarrow h.$ Assuming that a two-cover $F \rightarrow J_{\frak m}^1$ exists over $K$, we obtain the maximal unramified abelian cover of $C - \{P,P'\}$ by taking the fiber product with the morphism $C - \{P,P'\} \rightarrow J_{\frak m}^1$, and the maximal abelian cover of $C$ ramified only at the points $\{P,P'\}$ by taking the closure of the above unramified cover of $C - \{P,P'\}$.

$h\Rightarrow c.$ Finally, assuming the existence of the maximal abelian cover $M\rightarrow C$ of exponent $2$ that is ramified only at the points $\{P,P'\}$, we show that $f_0$ lies in the subgroup $K^{\times2}N(L^\TIMES)$ of $K^\times$,
which will complete the proof of Theorem \ref{thm:with8equiv}. The cover $M\rightarrow C$ corresponds to an inclusion of function fields $K(C) \rightarrow K(M)$. Over $K^s$, the function field $K^s(M)$ is obtained from $K^s(C)$ by adjoining the square roots of all rational functions on $C$ whose divisors have the form $2d_1$ or $2d_1 + (P) + (P')$ for some divisor $d_1$ on $C$. Since the characteristic of $K$ is not equal to $2$, these square roots give either unramified covers of $C$ or covers that are ramified only at the two points $P$ and $P'$ where the ramification is tame. More precisely, there are
$2^{2g+1} - 1$ distinct quadratic extensions of $K^s(C)$ of this form that are contained in $K^s(M)$, and their composition is equal to $K^s(M)$.

Indeed, by Galois theory, these quadratic extensions correspond to the subgroups of index $2$ in $J_{\frak m}[2](K^s)$, or equivalently to nontrivial $K^s$-points in the Cartier dual $\Rlk\mu_2/\mu_2$. Let $w$ be the rational
function $z/y^{g+1}$ on $C$, and let $t$ be the rational function $x/y$ on $C$, so $w^2 = f_0g(t)$. The nontrivial points in $(\Rlk\mu_2/\mu_2)(K^s)$ correspond bijectively to the nontrivial monic factorizations $g(x) = h(x)j(x)$ over $K^s$, and the corresponding quadratic extension of $K^s(C)$ is given by $K^s(C)(\sqrt {h(t)}) = K^s(C)(\sqrt {j(t)})$. When both $h(x)$ and $j(x)$ have even degree, the divisors of the rational functions $h(t)$ and $j(t)$ are of the form $2d_1$ and the corresponding quadratic cover of the curve $C$ is unramified. When the factors both have odd degree, these divisors are of the form $2d_1 + (P) + (P')$ and the quadratic cover is ramified at the points $P$ and $P'$.

Since there might be no nontrivial factorizations of $g(x)$ over $K$, there might be no nontrivial $K$-rational points of $\Rlk\mu_2/\mu_2$ and hence no quadratic field extensions of $K(C)$ contained in $K(M)$. However, over
$L$ we have the factorization $g(x) = (x - \beta)j(x) = h(x)j(x)$, so the algebra $L(M)$ must contain a square root $u$ of some constant multiple of the function $h(t) = (t - \beta)$. (The need to adjoin a square root of $t-\beta$ whose divisor has the form $2d_1+(P)+(P')$ is the main reason for the appearance of the generalized Jacobian $J_{\frak m}$ (cf. \cite[Footnote 2]{PSh}).)
Write $u^2 = \alpha(t - \beta)$ with $\alpha$ in $L^\TIMES$ and take the norm to $K(M)$ to obtain the equation $N(u)^2 = N(\alpha)g(t)$. Then the two rational functions $N(u)$ and $w$ in $K(M)^\times$ have the same
divisor, so they are equal up to a constant factor in $K^\times$. Writing $bN(u) = w$ with $b$ in $K^\times,$ we find $w^2 =  b^2 N(u)^2 = b^2N(\alpha)g(t)$. However, $w^2 = f_0g(t)$, so $f_0 = b^2N(\alpha)$ is in the subgroup $K^{\times2}N(L^\TIMES)$ of $K^\times$. This completes the proof of Theorem \ref{thm:with8equiv}.
\end{proof}

In fact, the obstruction classes for the eight conditions in Theorem \ref{thm:with8equiv} are all equal. More precisely, the obstruction class for conditions $a,b,c$ is the class of $f_0$ in $K^\times/(K^{\times2}N(L^\TIMES))$. This group can be viewed as a subgroup of $H^2(K,J_{\frak m}[2])$ via
$$\text{coker}\!\left(N:H^1(K,\Rlk\mu_2)\rightarrow H^1(K,\mu_2)\right)\,\longhookrightarrow \,H^2(K, (\Rlk\mu_2)_{N=1}).$$
We denote the image of $f_0$ in $H^2(K,J_{\frak m}[2])$ by $[f_0].$ This is the cohomological class $d_f$ whose non-vanishing obstructs the existence of rational orbits with invariant binary form $f$ for (all pure inner forms of) $\SL_n$; see \cite[\S 2.4 and Theorem~9]{AITII}).

The obstruction class for conditions $d,e$ is the class $\delta[J^1_{\frak m}]$ in $H^2(K,J_{\frak m}[2])$ where $\delta$ is the connecting homomorphism
$H^1(K,J_{\frak m})\rightarrow H^2(K,J_{\frak m}[2])$ arising from the exact sequence $1\rightarrow J_{\frak m}[2]\rightarrow J_{\frak m}\xrightarrow{2} J_{\frak m}\rightarrow 1.$

The obstruction class for conditions $f,g,h$ comes from Galois descent. There is an unramified two-cover $\pi:J_{\frak m}^1\rightarrow J_{\frak m}^1$ over $K^s$ obtained by identifying $J_{\frak m}^1$ with $J_{\frak m}$ using a $K^s$-point of $J_{\frak m}^1$, then taking the multiplication-by-2 map on $J_{\frak m}.$ The descent obstruction of this cover to $K$ is the image in $H^2(K,J_{\frak m}[2])$ of the class $[\pi:J_{\frak m}^1\rightarrow J_{\frak m}^1]$ under the following map from the Hochschild-Serre spectral sequence:
$$H^0\left(K,H^1(C\times_KK^s - \{P,P'\},J_{\frak m}[2])\right)\longrightarrow H^2(K,J_{\frak m}[2]).$$
This obstruction class equals $\delta[J^1_{\frak m}]$ for formal reasons (cf.\ \cite[Lemma 2.4.5]{Sk}). We have the following strengthening of Theorem \ref{thm:with8equiv}.

\begin{theorem}\label{thm:equalityobstruction}
Let $f(x,y) = f_0x^{2g+2} + f_1x^{2g+1}y + \cdots + f_{2g+2}y^{2g+2}$ be a binary form of degree $2g +2$ over $K$ with $\Delta (f)$ and $f_0$ both nonzero. Let $C$ be the smooth hyperelliptic curve of genus $g$ with equation $z^2 = f(x,y)$ and let $J_{\frak m}$ denote its generalized Jacobian. Then the obstruction classes for conditions $a$ through $h$ in Theorem~$\ref{thm:with8equiv}$ are all equal in $H^2(K,J_{\frak m}[2])$, i.e., $[f_0]=\delta[J^1_{\frak m}]$.
\end{theorem}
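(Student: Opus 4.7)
The theorem reduces to a single cohomological equality. The paragraphs immediately preceding the theorem already identify the obstruction class for conditions $a$, $b$, $c$ with $[f_0]$ (by the definition of $[f_0]$ via the connecting homomorphism attached to $1\to J_{\frak m}[2]\to \Rlk\mu_2\xrightarrow{N}\mu_2\to 1$), the obstruction class for $d$, $e$ with $\delta[J^1_{\frak m}]$ (by definition of $\delta$ on $1\to J_{\frak m}[2]\to J_{\frak m}\xrightarrow{[2]}J_{\frak m}\to 1$), and the obstruction class for $f$, $g$, $h$ with $\delta[J^1_{\frak m}]$ via Skorobogatov's Hochschild--Serre identification \cite[Lemma~2.4.5]{Sk}. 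So the theorem reduces to the single equality $[f_0]=\delta[J^1_{\frak m}]$ in $H^2(K,J_{\frak m}[2])$.

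My plan is to prove this equality by exhibiting a morphism of short exact sequences of commutative $K$-group schemes
\[
\begin{CD}
1 @>>> J_{\frak m}[2] @>>> \Rlk\mu_2 @>N>> \mu_2 @>>> 1 \\
@. @| @VV\phi V @VV\psi V @. \\
1 @>>> J_{\frak m}[2] @>>> J_{\frak m} @>{[2]}>> J_{\frak m} @>>> 1
\end{CD}
\]
in which the induced map $\psi_*\colon H^1(K,\mu_2)\to H^1(K,J_{\frak m})$ carries the class of $f_0$ to $[J^1_{\frak m}]$. Granted such a diagram, naturality of the connecting homomorphism immediately yields $[f_0]=\delta_{\mathrm{top}}(f_0)=\delta_{\mathrm{bot}}(\psi_*(f_0))=\delta[J^1_{\frak m}]$. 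The middle map $\phi$ is constructed by an Abel--Jacobi prescription on Weierstrass points: identifying $(\Rlk\mu_2)(K^s)=\mu_2^n$ with factors indexed by the roots $\beta_1,\ldots,\beta_n$ of $f(x,1)$, the Galois-equivariant assignment $(\epsilon_1,\ldots,\epsilon_n)\mapsto \sum_{i:\epsilon_i=-1}[(W_i)]\pmod{\Z d}$ extends the canonical identification $J_{\frak m}[2]\cong (\Rlk\mu_2)_{N=1}$ from Proposition~\ref{prop:torsion}.

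The main obstacle is that this naive $\phi$ lands not in $J_{\frak m}$ but in $\Pic_{C_{\frak m}/K}/\Z d = J_{\frak m}\sqcup J^1_{\frak m}$: the image lies in $J_{\frak m}$ when $N(\epsilon)=1$ and in $J^1_{\frak m}$ when $N(\epsilon)=-1$. The diagram above therefore has to be replaced by its analogue with bottom row the component-group extension $0\to J_{\frak m}\to \Pic_{C_{\frak m}/K}/\Z d\to \Z/2\to 0$, and the contribution of the component-group torsor then translated back into the Kummer sequence for $J_{\frak m}$. An essentially equivalent but perhaps cleaner route is to sharpen the proof of $h\Rightarrow c$ in Theorem~\ref{thm:with8equiv} from an equivalence of vanishings to an equality of classes: the local data $\alpha\in L^\times$ with $u^2=\alpha(t-\beta)$ and $N(u)^2 = N(\alpha)g(t)=(N(\alpha)/f_0)w^2$ assemble into an explicit \v{C}ech 2-cocycle with values in $J_{\frak m}[2]$ whose class is at once the descent obstruction of $M\to C$ (hence $\delta[J^1_{\frak m}]$ by \cite[Lemma~2.4.5]{Sk}) and the image of $f_0$ under the connecting map attached to the top row (hence $[f_0]$). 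Either route yields the equality, completing the proof.
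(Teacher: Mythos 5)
Your reduction of the theorem to the single identity $[f_0]=\delta[J^1_{\frak m}]$ agrees with the paper, and you have correctly located the obstacle: the Abel--Jacobi prescription on Weierstrass points sends $\Rlk\mu_2$ into the disconnected group $\Pic_{C_{\frak m}/K}/\Z\cdot d=J_{\frak m}\sqcup J^1_{\frak m}$, not into $J_{\frak m}$, with the component detected by the norm. But neither of your proposed repairs closes the resulting gap. In the first route, replacing the bottom row by the component-group extension $0\to J_{\frak m}\to \Pic_{C_{\frak m}/K}/\Z\cdot d\to\Z/2\to 0$ and invoking naturality of connecting homomorphisms produces a commutative square whose lower-right corner is $H^2(K,J_{\frak m})$, not $H^2(K,J_{\frak m}[2])$. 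That is fatal: by exactness of the Kummer sequence, the image of $\delta$ is exactly the kernel of $H^2(K,J_{\frak m}[2])\to H^2(K,J_{\frak m})$, so the most such a square can ever tell you is that $[f_0]$ lies in the image of $\delta$ --- i.e., $[f_0]=\delta(\xi)$ for \emph{some} $\xi\in H^1(K,J_{\frak m})$ --- with no control whatsoever on $\xi$. The clause ``translated back into the Kummer sequence for $J_{\frak m}$'' is precisely the content you would need to supply, and it is not a formal consequence of naturality.

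What the paper actually does differs from your sketch in two essential respects, and both missing ingredients are needed. First, it constructs an explicit $1$-cochain $[J^{1/2}_a]$ with values in $J_{\frak m}\sqcup J^1_{\frak m}$, supported on a point $P_a$ with $x$-coordinate $a\in K$ and rational over $K(\sqrt{f_0g(a)})$; one checks it is a cocycle, that $2[J^{1/2}_a]=[J^1_{\frak m}]$ under the map $[D]\mapsto 2[D]-\deg([D])\cdot d$, and that its image in $H^1(K,\mu_2)$ is the class of $f_0g(a)=f_0N_{L/K}(a-\beta)\equiv f_0$ modulo norms (Proposition~\ref{prop:W2div}). Second, it proves by a direct cocycle computation (Lemma~\ref{lem:commute}) that the square formed by $H^1(K,J_{\frak m}\sqcup J^1_{\frak m})\to H^1(K,J_{\frak m})\xrightarrow{\delta}H^2(K,J_{\frak m}[2])$ and $H^1(K,J_{\frak m}\sqcup J^1_{\frak m})\to H^1(K,\mu_2)\to H^2(K,J_{\frak m}[2])$ commutes up to sign; unlike your naturality square this keeps the coefficients at $J_{\frak m}[2]$ throughout, and it is not an instance of functoriality of $\delta$. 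Chasing $[J^{1/2}_a]$ around this square gives $\delta[J^1_{\frak m}]=[f_0]$ (the sign is harmless since everything is $2$-torsion). Your second route, finally, asserts that the local data $u^2=\alpha(t-\beta)$ assemble into a \v{C}ech $2$-cocycle representing simultaneously $[f_0]$ and the descent obstruction of $M\to C$; that simultaneous representation is exactly the statement being proved, and no cocycle or verification is offered. Both routes therefore have the same genuine gap: the absence of a square root of $[J^1_{\frak m}]$ in $H^1$ of the disconnected Picard group with computable image in $H^1(K,\mu_2)$, and of the sign-commutativity relating the two connecting maps into $H^2(K,J_{\frak m}[2])$.
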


\begin{proof} Consider the following commutative diagram:
\begin{displaymath}
\xymatrix{
1\ar[r]&J_{\frak m}[2]\ar[r]\ar@{^{(}->}[d]&J_{\frak m}\ar[r]^{2}\ar@{^{(}->}[d]&J_{\frak m}\ar[r]\ar[d]^{=}&1\\
1\ar[r]&(J_{\frak m} \sqcup J_{\frak m}^1)[2]\ar[r]\ar@{->>}[d]&J_{\frak m} \sqcup J_{\frak m}^1\ar[r]^{2}\ar@{->>}[d]&J_{\frak m}\ar[r]&1\\
&\mu_2\ar[r]^{=}&\mu_2&&.}
\end{displaymath}
Here the map $J_{\frak m} \sqcup J_{\frak m}^1\xrightarrow{2} J_{\frak m}$ is given by $[D]\mapsto 2[D]-\deg([D])\cdot d.$ Theorem \ref{thm:equalityobstruction} follows from the following two results.

\begin{proposition}\label{prop:W2div}
For any $a\in K,$ there exists a class $[J^{1/2}_a]\in H^1(K,J_{\frak m} \sqcup J_{\frak m}^1)$ such that $2[J^{1/2}_a]=[J^1_{\frak m}]$ in $H^1(K,J_{\frak m})$ and such that the image of $[J^{1/2}_a]$ in $H^1(K,\mu_2)=K^\times/K^{\times2}$ equals $f_0g(a)=f_0N_{L/K}(a-~\beta)$.
\end{proposition}

\begin{lemma}\label{lem:commute}
Let $1\rightarrow A_1\rightarrow B_1\rightarrow C\rightarrow 1$ and $1\rightarrow A_2\rightarrow B_2\rightarrow C\rightarrow 1$ be central extensions of algebraic groups over $K$ such that the following diagram commutes:
\begin{displaymath}
\xymatrix{
1\ar[r]&A_1\ar[r]\ar@{^{(}->}[d]&B_1\ar[r]\ar@{^{(}->}[d]&C\ar[r]\ar[d]^{=}&1\\
1\ar[r]&A_2\ar[r]\ar@{->>}[d]&B_2\ar[r]\ar@{->>}[d]&C\ar[r]&1\\
&D\ar[r]^{=}&D&&}
\end{displaymath}
Then the following diagram commutes up to sign:
\begin{displaymath}
\xymatrix{
H^1(K,B_2)\ar[r]\ar[d]&H^1(K,C)\ar[d]\\
H^1(K,D)\ar[r]&H^2(K,A_1)}
\end{displaymath}
\end{lemma}

Lemma \ref{lem:commute} follows from a direct cocycle computation. For more details, see \cite[Lemma~2.8.2]{Thang}. We note that when Lemma \ref{lem:commute} is used to prove Theorem \ref{thm:equalityobstruction}, all the cohomology groups are 2-torsion and hence commutativity up to sign is equivalent to commutativity. We now prove Proposition \ref{prop:W2div}. Fix $a\in K.$ Let $P_a\in C(K(\sqrt{\al}))$ be a point with $x$-coordinate $a$, where $\al=f_0g(a)$, and let $P'_a$ be the conjugate of $P_a$ under the hyperelliptic involution. The class $[J^1_{\frak m}]\in H^1(K,J_{\frak m})$ is given by the 1-cocycle $\sigma\mapsto \gl{(P'_a)}{\sigma}-(P'_a).$ In other words,
$$[J^1_{\frak m}]_\sigma = \begin{cases}0&\text{if }\sigma(\sqrt{\al})=\sqrt{\al}\\(P_a)-(P'_a)&\text{if }\sigma(\sqrt{\al})=-\sqrt{\al}.\end{cases}$$
Let $[J^{1/2}_a]$ denote the following 1-cochain with values in $(J_{\frak m} \sqcup J^1_{\frak m})(K^s):$
$$[J^{1/2}_a]_\sigma = \begin{cases}0&\text{if }\sigma(\sqrt{\al})=\sqrt{\al}\\(P_a)&\text{if }\sigma(\sqrt{\al})=-\sqrt{\al}.\end{cases}$$
Since $2(P_a)- d =2(P_a)-((P_a)+(P'_a))=(P_a)-(P'_a),$ we see that $2[J^{1/2}_a]_\sigma=[J^1_{\frak m}]_\sigma$ for all $\sigma\in\Gal(K^s/K).$ Moreover, a direct computation shows that $[J^{1/2}_a]$ is a 1-cocycle and its image in $H^1(K,\mu_2)$ is the 1-cocycle $\sigma\mapsto\gl{\sqrt{\al}}{\sigma}/\sqrt{\al}.$ This completes the proof of Proposition \ref{prop:W2div}, and thus Theorem \ref{thm:equalityobstruction}.
\end{proof}

\section{Soluble orbits} \label{soluble}

In the previous section, we gave necessary and sufficient conditions for the existence of pencils of bilinear forms $(A,B)\in K^2\otimes\Sym_2 K^n$ having a given invariant binary form. In this section, we consider \textit{soluble} pencils of bilinear forms $(A,B),$ i.e., those for which the associated Fano variety $F=F(A,B)$ has a $K$-rational point.

Fix a binary form $f(x,y)$ of degree $n = 2g+2$ over $K$ with $\Delta(f)$ and $f_0$ nonzero in $K$, and let $C$ be the smooth hyperelliptic curve with equation $z^2 = f(x,y)$. Suppose that $(A,B)$ is a generic pencil of bilinear forms on $W$ over $K$ with invariant binary form $f(x,y)=\disc(Ax-By)$ and let $(A',B')$ be the regular pencil
of bilinear forms on $W \oplus K^2$ having invariant binary form $f(x,y)y^2$ constructed in Section \ref{regular}.  We say that $(A,B)$ lies in
a \textit{soluble} orbit for $\SL_n$ if the Fano variety $F_{\frak m}$ of the base locus of $(A',B')$ has a $K$-rational point. Similarly, we say that the pencil $(A,B)$ lies in a {\it soluble} orbit for $\SL_n/\mu_2$ if the Fano variety $F$ of the base locus of $(A,B)$ has a $K$-rational point. In this section, we classify the soluble orbits for $\SL_n$ and $\SL_n/\mu_2$.

Since we have constructed an unramified two-cover
$F_{\frak m} \rightarrow J_{\frak m}^1$,
a necessary condition for the existence of soluble orbits for $\SL_n$ is that $J_{\frak m}^1(K)$ is nonempty. 
In this case, the group $J_{\frak m}(K)$ acts simply transitively on the set of points $J_{\frak m}^1(K)$.

\begin{theorem}\label{thm:sol}
Let $f(x,y)$ be a binary form of degree $n=2g+2$ over $K$ with $\Delta(f)$ and $f_0$ nonzero in $K$. Then soluble orbits for the action of $\SL_n(K)$ on $K^2\otimes\Sym_2K^n$ having invariant binary form $f(x,y)$ exist if and only if there is a $K$-rational divisor of odd degree on the curve $C:z^2 = f(x,y)$. In that case, they are in bijection with the elements of $J^1_{\frak m}(K)/2J_{\frak m}(K)$.
\end{theorem}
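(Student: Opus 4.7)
My plan is to establish the existence criterion and then construct the bijection, using the unramified two-cover $\pi\colon F_{\frak m}\to J^1_{\frak m}$ from Section \ref{regular} as the main tool. For the ``only if'' direction of existence, if $(A,B)$ is a soluble orbit then $F_{\frak m}(K)\neq\emptyset$ by definition, so $\pi$ yields a $K$-point of $J^1_{\frak m}$; since the singular point $Q\in C_{\frak m}(K)$ is $K$-rational, this class is represented by a $K$-rational divisor (no Brauer obstruction), and a moving argument produces a representative prime to $\frak m$, which restricts to a $K$-rational odd degree divisor on $C$. For ``if'', a $K$-rational odd degree divisor on $C$ trivializes $[J^1_{\frak m}]$ in $H^1(K,J_{\frak m})$, so Theorem \ref{thm:with8equiv} guarantees some pencil $(A,B)$ with invariant $f$ exists, with $2[F_{\frak m}]=[J^1_{\frak m}]=0$.

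To promote such a pencil to a soluble one, Corollary \ref{cor:orbitSL} together with Proposition \ref{prop:torsion} shows that the set of $\SL_n(K)$-orbits with fixed invariant $f$ is a torsor for $H^1(K,J_{\frak m}[2])$ (since stabilizers are $J_{\frak m}[2]$ and $H^1(K,\SL_n)=1$); twisting the orbit by $c\in H^1(K,J_{\frak m}[2])$ translates $[F_{\frak m}]$ by the image $\iota(c)\in H^1(K,J_{\frak m})$, where $\iota$ arises from $J_{\frak m}[2]\hookrightarrow J_{\frak m}$. The Kummer sequence
$$0\to J_{\frak m}(K)/2J_{\frak m}(K)\to H^1(K,J_{\frak m}[2])\xrightarrow{\iota} H^1(K,J_{\frak m})[2]\to 0$$
shows $\iota$ surjects onto $H^1(K,J_{\frak m})[2]$, so a suitable twist makes $[F_{\frak m}]=0$. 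The same sequence shows that soluble orbits (the preimage of $0$ under $\iota$) form a torsor for $J_{\frak m}(K)/2J_{\frak m}(K)$.

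For the bijection, define $\phi$ sending a soluble orbit $(A,B)$ to the class of $\pi(y_0)\bmod 2J_{\frak m}(K)$ for any $y_0\in F_{\frak m}(K)$; well-definedness is immediate because any other $K$-point has the form $y_0+t$ with $t\in J_{\frak m}(K)$, and $\pi(y_0+t)=\pi(y_0)+2t$. The inverse $\psi$ sends $[e]\in J^1_{\frak m}(K)/2J_{\frak m}(K)$ to the orbit whose Fano cover is the pullback of $[2]\colon J_{\frak m}\to J_{\frak m}$ along the translation isomorphism $J^1_{\frak m}\iso J_{\frak m}$ determined by $e$; this cover has a distinguished $K$-point, and by the theory of Section \ref{regular} corresponds to a unique $\SL_n(K)$-orbit. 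The composites $\phi\circ\psi$ and $\psi\circ\phi$ act as the identity by equivariance with respect to the common torsor action of $J_{\frak m}(K)/2J_{\frak m}(K)$ on both sides, so matching a single base point suffices.

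The hardest step will be justifying that $\psi$ genuinely produces an $\SL_n(K)$-orbit, i.e.\ that every two-cover obtained this way is realized as the Fano variety of an actual pair $(A,B)$. This is essentially the orbit-to-cover correspondence of \cite{W}, and can be verified either by explicitly tracing the $(\alpha,s)$-parametrization of Corollary \ref{cor:orbitSL} against the geometric description of $F_{\frak m}$, or more efficiently by noting that once both sides are shown to be torsors for $J_{\frak m}(K)/2J_{\frak m}(K)$ and $\phi$ is checked to intertwine the torsor actions, the bijection follows automatically from verifying a single base point.
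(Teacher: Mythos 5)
Your proposal is correct, but the route you take to produce a soluble orbit from a $K$-rational odd-degree divisor is genuinely different from the paper's. You argue purely cohomologically: once $J^1_{\frak m}(K)\neq\varnothing$ forces $[J^1_{\frak m}]=0$, Theorem~\ref{thm:with8equiv} supplies some orbit, its Fano class $[F_{\frak m}]$ is then $2$-torsion, and the surjectivity of $H^1(K,J_{\frak m}[2])\rightarrow H^1(K,J_{\frak m})[2]$ coming from the Kummer sequence lets you twist the orbit so that $[F_{\frak m}]$ becomes trivial. The paper instead argues constructively: when $C_{\frak m}$ has a non-singular $K$-point $Q$ it writes down the regular pencil on $L\oplus K^2$ attached to $\alpha=(x-T)(Q)$ and exhibits an explicit rational $(g+1)$-plane isotropic for both forms (with separate computations for $z_0\neq 0$ and $z_0=0$), then handles the general case by passing to an odd-degree extension $K'$ over which $C$ has a point and using a corestriction argument together with the fact that $[F_{\frak m}]$ has order dividing $4$. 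Your argument is shorter and avoids the linear algebra entirely; what the paper's construction buys is the identification of \emph{which} orbit is soluble, namely the one whose invariant $\alpha$ equals $(x-T)(D)$ under the descent map --- a compatibility that the paper relies on later (e.g.\ in the integral-orbit construction of Section~\ref{integral} and the local computations feeding into Section~\ref{sec:sieve}) and that the abstract twisting argument does not deliver. For the bijection itself, your fallback --- both sides are torsors under $J_{\frak m}(K)/2J_{\frak m}(K)$, hence in bijection --- is exactly the paper's argument, and it correctly sidesteps the genuine difficulty you flag in defining $\psi$ (realizing an abstract pointed two-cover as the Fano variety of an actual pencil), which neither you nor the paper needs to resolve for this statement.
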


\begin{proof}
Suppose first that soluble orbits with invariant binary form $f(x,y)$ exist. Let $(A,B)$ be in $K^2\otimes\Sym_2K^n$ with invariant binary form $f(x,y)$ such that the Fano variety $F(A,B)_{\frak m}$ of the associated regular pencil $(A',B')$ in $W\oplus K^2$ has a rational point. The stabilizer of $(A,B)$ in $\SL_n$ is isomorphic to $J_{\frak m}[2]$ by Corollary~\ref{cor:orbitSL} and Proposition~\ref{prop:torsion}. Since $H^1(K,\SL_n)=1,$ we see that the rational orbits with invariant binary form $f(x,y)$ are in bijection with the elements in the Galois cohomology group $H^1(K,J_{\frak m}[2])$. This bijection depends on the choice of the initial soluble orbit $(A,B)$ which maps to the trivial class in  $H^1(K,J_{\frak m}[2])$.

Explicitly, suppose the pair $(A_1,B_1)\in K^2\otimes\Sym_2K^n$ has invariant binary form $f(x,y)$ and corresponds to the class $c\in H^1(K,J_{\frak m}[2])$. Let $(A_1',B_1')$ be the associated regular pencil with Fano variety $F(A_1,B_1)_{\frak m}.$ Then as elements of $H^1(K,J_{\frak m})[4],$ we have, up to sign\footnote{The ambiguity of sign comes from the fact that we cannot distinguish between $[F_{\frak m}]$ and $-[F_{\frak m}]$ in $H^1(K,J_{\frak m})$. In other words, we cannot distinguish the two copies of $F_{\frak m}$ in the group $X_{\frak m}$ defined in \eqref{eq:pencilregular}.\label{ft:sign}}, the formula
\begin{equation}\label{eq:soleq}
[F(A_1,B_1)_{\frak m}] = [F(A,B)_{\frak m}] + j'(c),
\end{equation}
where $j'$ denotes the natural map $H^1(K,J_{\frak m}[2])\rightarrow H^1(K,J_{\frak m})[2]$ and the addition is taking place in $H^1(K,J_{\frak m}).$
Hence we see that $F(A_1,B_1)_{\frak m}$ is the trivial torsor of $J_{\frak m}$ if and only if $c$ is in the Kummer image of $J_{\frak m}(K)/2J_{\frak m}(K).$ Therefore, the set of soluble orbits with invariant binary form $f(x,y)$ forms a principal homogeneous space for the quotient group $J_{\frak m}(K)/2J_{\frak m}(K).$ The choice of the fixed soluble orbit $(A,B)$ trivializes this principal homogeneous space.

On the other hand, if $x\in F(A,B)_{\frak m}(K)$ is any rational point, then the sum $x+x = 2x$ in the algebraic group $X_{\frak m}$ in \eqref{eq:pencilregular} gives a rational point of $J^1_{\frak m}$ well-defined up to hyperelliptic conjugation (cf. Footnote \ref{ft:sign}). Hence $J^1_{\frak m}(K)$ is nonempty. Therefore, the set $J^1_{\frak m}(K)/2J_{\frak m}(K)$ is also in bijection with $J_{\frak m}(K)/2J_{\frak m}(K)$.

To complete the proof of Theorem \ref{thm:sol}, it remains to show that if $J^1_{\frak m}(K)$ is nonempty, then soluble orbits with invariant binary form $f(x,y)$ exist.
We show this first in the special case where the curve $C_{\frak m}$ has a non-singular $K$-rational point $Q=(x_0,1,z_0)$. Let $L=K[x]/f(x,1)$ denote as usual the \'{e}tale algebra of rank $n$ associated to $f(x,y)$ and let $\beta$ denote the image of $x$ in $L$. The rational orbit corresponding to $(Q)$ is given by the equivalence class of a pair $(\alpha,s)$ (see Corollary~\ref{cor:orbitSL}) where $\alpha = (x-T)(Q)$. Here ``$x-T$'' is the descent map introduced by Cassels \cite{Cassels}:
$$J^1_{\frak m}(K)/2J_{\frak m}(K) \rightarrow (L^\times/L^{\times2})_{N \equiv f_0}.$$
We note that $s$ is not uniquely determined when $W_{\frak m}[2]$ is a nontrivial torsor of $J_{\frak m}[2].$ In this case, the fibers of the above $x-T$ map also have size $2$. From the definition of the bijection between the set of rational orbits and the set of equivalence classes of pairs $(\alpha,s)$ in Section \ref{dedorbits}, we see that if the orbit corresponding to a pair $(\alpha,s)$ is soluble, then the orbit corresponding to any pair $(\alpha',s')$ with $\alpha'=\alpha$ is also soluble.

Consider the two bilinear forms $(A',B')$ on $L\oplus K^2$ given by
\begin{eqnarray*}
\langle (\lambda,a,b),(\mu,a',b') \rangle_{A'} &=& (\mbox{coefficient of }\beta^{n-1}\mbox{ in }\alpha\lambda\mu) + aa',\\
\langle (\lambda,a,b),(\mu,a',b') \rangle_{B'} &=& (\mbox{coefficient of }\beta^{n-1}\mbox{ in }\alpha\beta\lambda\mu) + ab' + a'b.
\end{eqnarray*}
We show that for $\alpha=(x-T)(Q)$, there is a rational $(g+1)$-plane $x'$ isotropic with respect to both bilinar forms.

When $z_0\neq0$, we have $\alpha=(x-T)(Q)=x_0-\beta.$ Then $$x'=\Span\{(1,0,0),(\beta,0,0),\ldots,(\beta^{g-1},0,0),(\beta^g,1,-\frac{1}{2}(x_0+\frac{f_1}{f_0}))\}$$ is isotropic with respect to both bilinear forms. To check this, we note that the unique polynomial $P(x)$ of degree at most $2g+1$ with $P(\beta)=(x_0-\beta)\beta^{2g+1}$ has leading coefficient $x_0+f_1/f_0.$

When $z_0=0$, we set $h_0(t)=t-x_0$ and $h_1(t)=f(t,1)/(t-x_0)$. Then $\alpha=(x-T)(Q)=h_1(\beta)-h_0(\beta)$ and the following $(g+1)$-plane is isotropic with respect to both bilinear forms: $$x'=\Span\{(h_1(\beta)-h_1(x_0),0,0),(\beta-x_0,0,0),\ldots,((\beta-x_0)^{g-1},0,0),((\beta-x_0)^g,1,-\frac{1}{2}((2g+1)x_0 + \frac{f_1}{f_0}))\}.$$ This can be checked by a simple calculation noting that $h_1(\beta)(h_1(\beta)-h_1(x_0))=h_0(\beta)h_1(\beta)=0.$

Before moving on to the general case, we make an important observation. Using this pencil with $\alpha=(x-T)(Q)$ as the base point, we obtain a bijection between the set of the rational orbits with invariant binary form $f(x,y)$ and $H^1(K,J_{\frak m}[2])$ as described above. If $(A_1,B_1)$ is an element of $K^2\otimes\Sym_2K^n$ with invariant binary form $f(x,y)$ such that its associated $\alpha$ equals $(x-T)(D)$ for some $D\in J^1_{\frak m}(K)/2J_{\frak m}(K)$, then the orbit of $(A_1,B_1)$ corresponds to the class $D - (Q)$ or $D - (Q')$ in $J_{\frak m}(K)/2J_{\frak m}(K)$ where $Q'$ denotes the hyperelliptic conjugate of $Q$. Hence the orbit of $(A_1,B_1)$ is soluble.

We now treat the general case, assuming only that $J^1_{\frak m}(K)$ is nonempty. Now $C_{\frak m}$ has a non-singular point $Q$ defined over some extension $K'$ of $K$ of odd degree $k$. Let $D\in J^1_{\frak m}(K)$ denote the divisor class of degree $1$ obtained by taking the sum of the conjugates of $Q$ and subtracting $\frac{k-1}{2}$ times the hyperelliptic class. We claim that the orbits corresponding to $D$ are soluble thereby completing the proof of Theorem \ref{thm:sol}. To prove the claim, let $(A,B)$ be an element of $K^2\otimes\Sym_2K^n$ with invariant binary form $f(x,y)$ such that its associated $\alpha$ equals $(x-T)(D)$, and let $F(A,B)_{\frak m}$ denote the Fano variety of the associated regular pencil. Since $C$ has a point over $K'$, we have seen that the $K'$-rational orbits $(\alpha,s)$ with $\alpha=(x-T)(Q)$ and hence with $\alpha=(x-T)(D)$ are soluble over $K'$. In other words, $F(A,B)_{\frak m}(K')$ is nonempty. Thus, as an element of $H^1(K,J_{\frak m})$, the class of $F(A,B)_{\frak m}$ becomes trivial when restricted to $H^1(K',J_{\frak m})$. A standard argument using the corestriction map shows that this class is killed by the degree $k$ of $K'$ over $K$. Since $F(A,B)_{\frak m}$ is a torsor of $J_{\frak m}$ of order dividing $4$ and $k$ is odd, we see that $F(A,B)_{\frak m}$ must be the trivial torsor.
\end{proof}

\medskip

The same argument also classifies the soluble orbits for $\SL_n/\mu_2$, provided that $C$ has a $K$-rational divisor of odd degree. The descent map ``$x-T$'' gives a map of sets
$$J^1(K)/2J(K) \rightarrow (L^\TIMES/(L^{\TIMES2}K^\times))_{N \equiv f_0}$$
and is either 2-to-1 or injective (depending on the triviality of the class $W[2]$ in $H^1(K,J[2])$). To see that there are no soluble orbits when $C$ has no divisors of odd degree, we use the exact sequence of commutative algebraic groups \cite[Corollary~3.22]{W}:
$$1 \rightarrow T \rightarrow X_{\frak m} \rightarrow X \rightarrow 1.$$
If $J_{\frak m}^1(K)$ is empty but both $J^1(K)$ and $F(K)$ are nonempty, then the quotient of $X(K)$ by the image of $X_{\frak m}(K)$ maps onto the component group $\mathbb Z/ 4\mathbb Z$ of
$X$. On the other hand, this quotient injects into $H^1(K,T)$, which has exponent $2$, a contradiction. Hence we have proved the following:

\begin{theorem}\label{thm:soluble}
Let $f(x,y)$ be a binary form of degree $n=2g+2$ over $K$ with $\Delta(f)$ and $f_0$ nonzero in $K$. Then soluble orbits for the action of $(\SL_n/\mu_2)(K)$ having invariant binary form $f(x,y)$ exist if and only if there is a $K$-rational divisor of odd degree on the curve $C:z^2=f(x,y)$. In that case, they are in bijection with the cosets of $J^1(K)/2J(K)$ and the group $J(K)/2J(K)$ acts simply transitively on the set of soluble orbits.
\end{theorem}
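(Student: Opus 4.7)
The plan is to mirror the proof of Theorem~\ref{thm:sol}, replacing $\SL_n$ with $\SL_n/\mu_2$, the stabilizer $J_{\frak m}[2]$ with $J[2]$ (via Corollary~\ref{cor:orbitSLmu2} and Proposition~\ref{prop:torsion}), and the Fano variety $F_{\frak m}$ of the regular pencil with the Fano variety $F$ of the generic pencil. The group $X = J \sqcup F \sqcup J^1 \sqcup F$ from Theorem~\ref{thm:pencil} plays the role that $X_{\frak m}$ played in the proof of Theorem~\ref{thm:sol}. The bridge between the two pictures is the exact sequence $1 \to T \to X_{\frak m} \to X \to 1$ of \cite[Corollary~3.22]{W}, together with the fact that $H^1(K,T) = K^\times/N(K'^\times)$ has exponent~$2$.

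For the necessary direction, suppose the $(\SL_n/\mu_2)$-orbit of some $(A,B)$ is soluble, and pick $x \in F(A,B)(K)$. Doubling inside $X$ places $2x$ in the component $J^1$, so $J^1(K) \neq \emptyset$. I claim moreover that $J^1_{\frak m}(K) \neq \emptyset$, which by Proposition~\ref{pic} produces a $K$-rational divisor of odd degree on $C$. If instead $J^1_{\frak m}(K) = \emptyset$, then since the components of $X_{\frak m}$ meeting $X_{\frak m}(K)$ form a subgroup of $\pi_0(X_{\frak m}) = \Z/4\Z$ and a rational point of $F_{\frak m}$ would double into $J^1_{\frak m}$, the image of $X_{\frak m}(K) \to X(K)$ is confined to the identity component $J(K)$. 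But $x \in F(K)$ has order-$4$ image in $\pi_0(X)$, so its class in the cokernel $X(K)/\text{image}(X_{\frak m}(K))$ has order $4$, contradicting the fact that this cokernel injects into the exponent-$2$ group $H^1(K,T)$.

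For the sufficient direction, assume $C$ has a $K$-rational divisor of odd degree, so $J^1_{\frak m}(K) \neq \emptyset$. Theorem~\ref{thm:sol} produces a pencil $(A,B)$ with $F_{\frak m}(A,B)(K) \neq \emptyset$; the surjection $X_{\frak m} \to X$ restricts to a $T$-torsor $F_{\frak m}(A,B) \to F(A,B)$ on the appropriate component, so $F(A,B)(K) \neq \emptyset$ and the $(\SL_n/\mu_2)$-orbit of $(A,B)$ is soluble. Fixing this orbit as the base point, Corollary~\ref{cor:orbitSLmu2} and Proposition~\ref{prop:torsion} identify the stabilizer with $J[2]$, so the $(\SL_n/\mu_2)(K)$-orbits with invariant $f$ inject into $H^1(K,J[2])$. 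If $(A_1,B_1)$ corresponds to $c \in H^1(K,J[2])$, the twisting argument from the proof of Theorem~\ref{thm:sol} gives, up to the sign ambiguity of Footnote~\ref{ft:sign},
$$[F(A_1,B_1)] = [F(A,B)] + c \quad \text{in } H^1(K,J),$$
so solubility of $(A_1,B_1)$ is equivalent to $c$ lying in the Kummer image of $J(K)/2J(K)$. Hence the soluble orbits form a torsor under $J(K)/2J(K)$; the bijection with $J^1(K)/2J(K)$ sends a soluble orbit $(A_1,B_1)$ to the class of $2y$ for any $y \in F(A_1,B_1)(K)$, which is well defined modulo $2J(K)$ (since two such $y$ differ by a $J(K)$-translate) and is visibly $J(K)/2J(K)$-equivariant.

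The main obstacle is the necessary direction: carefully tracking how $K$-rational components of $X_{\frak m}$ and $X$ interact under the $T$-quotient, and exploiting the exponent-$2$ property of $H^1(K,T)$, to force $J^1_{\frak m}(K) \neq \emptyset$ from the mere existence of a rational point on $F$. The remaining steps are a formal port of the proof of Theorem~\ref{thm:sol}.
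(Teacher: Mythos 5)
Your necessary direction is exactly the paper's argument (the sequence $1 \to T \to X_{\frak m} \to X \to 1$, the observation that the rational components of $X_{\frak m}$ form a subgroup of $\Z/4\Z$ missing the order-$4$ elements, and the exponent-$2$ property of $H^1(K,T)$), and your route to the existence of one soluble orbit---pushing a $K$-point of $F_{\frak m}$ supplied by Theorem~\ref{thm:sol} forward along $X_{\frak m}\to X$ instead of re-running the explicit isotropic-subspace construction---is a correct and clean shortcut. The gap is in the classification step. You correctly note that the $(\SL_n/\mu_2)(K)$-orbits with invariant $f$ only \emph{inject} into $H^1(K,J[2])$: they are in bijection with $\ker\gamma$, where $\gamma:H^1(K,J[2])\to H^1(K,\SL_n/\mu_2)\hookrightarrow H^2(K,\mu_2)$ is cup product with the class of $W[2]$. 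Your twisting identity then shows that an orbit with class $c$ is soluble if and only if $c$ lies in the Kummer image $\delta(J(K)/2J(K))$. But to conclude that the soluble orbits form a torsor under $J(K)/2J(K)$---equivalently, that your map to $J^1(K)/2J(K)$ is surjective and not merely injective---you must also show that \emph{every} class in $\delta(J(K)/2J(K))$ is actually realized by an orbit, i.e.\ that $\delta(J(K)/2J(K))\subset\ker\gamma$. In the $\SL_n$ case this is free because $H^1(K,\SL_n)=1$ and all of $H^1(K,J_{\frak m}[2])$ is realized; for $\SL_n/\mu_2$ it is not, and it is precisely the point of the one substantive sentence in the paper's proof: the descent map $x-T:J^1(K)/2J(K)\to (L^\times/L^{\times2}K^\times)_{N\equiv f_0}$ manufactures a pair $(\alpha,s)$, hence a rational orbit, directly from every class in $J^1(K)/2J(K)$. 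As written, your argument only shows that the set of soluble orbits is a torsor under some subgroup of $J(K)/2J(K)$.

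The gap can be filled without invoking $x-T$, but it requires an argument you have not given. Since $\ker\gamma$ equals the image of $H^1(K,J_{\frak m}[2])\to H^1(K,J[2])$ (end of Section~\ref{hyp}), it suffices to lift Kummer classes through $J_{\frak m}$; and under the standing hypothesis that $C$ has a $K$-rational divisor of odd degree, Proposition~\ref{pic} together with the sequence \eqref{eq:PicCm} shows that $J_{\frak m}(K)\to J(K)$ is surjective, so every $\delta(a)$ with $a\in J(K)$ lifts to $H^1(K,J_{\frak m}[2])$ and hence lies in $\ker\gamma$. This is genuinely where the odd-degree-divisor hypothesis enters the classification, so it cannot be waved away. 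Two smaller points: the assertion that $(A_1,B_1)\mapsto[2y]$ is ``visibly'' equivariant hides the check that twisting by $\delta(a)$ shifts the doubling map $F\to J^1$ by $a$ rather than $2a$; and the $K$-rational odd-degree divisor at the end of your necessary direction comes from the fact that rational divisor classes on $C_{\frak m}$ are always represented by rational divisors (Section~\ref{hyp}, since $C_{\frak m}$ has the rational point $Q$), not from Proposition~\ref{pic} as cited.
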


\section{Finite fields and archimedean local fields}\label{arithmetic}

In this section we consider the orbits for the action of $(\SL_n/\mu_2)(K)$ on $K^2\otimes\Sym_2K^n$ when the base field $K$ is a finite field or an archimedean local field. In particular, we compute the number of these orbits with a fixed invariant binary form $f(x,y)$.

\subsection{Finite fields}\label{sec:Fporbits}

Let $K$ be a finite field of odd cardinality $q$. Let $f(x,y)$ be a binary form of even degree $n$ over $K$
with nonzero discriminant $\Delta$ and nonzero first coefficient $f_0$, and write $f(x,1) = f_0g(x)$. We factor
$$g(x) = \prod_{i=1}^m g_i(x)$$
where $g_i(x)$ has degree $d_i$ and is irreducible. Then $L$ is the product of $m$ finite fields $L_i$ of cardinality $q^{d_i}$. Since finite fields have unique extensions of any degree, we see that either one of the $L_i$ has odd degree over $K$ or all of the $L_i$ contain the unique quadratic extension of $K$. Therefore, $f(x,y)$ always has either an odd or an even factorization over $K$.

Since the norm map $L^\TIMES \rightarrow K^\times$ is surjective, $f_0$ is always a norm. By Corollary \ref{cor:orbitSLmu2}, the number of $(\SL_n/\mu_2)(K)$-orbits
with binary form $f(x,y)$ 
is: $2^m$ if all $L_i$ have even degree and $n\equiv0$~(mod $4$); $2^{m-1}$ if all $L_i$ have even degree and $n\equiv2$~(mod $4$); and $2^{m-2}$ if some $L_i$ has odd degree over $K$. The size of the stabilizer equals the number of even factorizations of $f(x,y)$ over $K$. Hence the stabilizer has size given as follows: $2^m$ if all $L_i$ have even degree and $n\equiv0$ (mod~$4$); $2^{m-1}$ if all $L_i$ have even degree and $n\equiv2$ (mod $4$); and $2^{m-2}$ if some $L_i$ has odd degree over $K$. Therefore, the number of pairs $(A,B)\in K^2\otimes\Sym_2K^n$ with invariant binary form $f(x,y)$ is $|(\SL_n/\mu_2)(K)|=|\!\SL_n(K)|$. This agrees with \cite[\S 3.3]{B}. For the purpose of application in Section~\ref{mainproofs}, the main ingredients that we need are the number of orbits and the fact that all the orbits with the same invariant binary form have the same number of elements.

By Lang's theorem, we have $H^1(K,J) = H^1(K,J_{\frak m}) = 0$. Hence the Fano
varieties $F$ and $F_{\frak m}$ associated to an orbit always have a $K$-rational point, and
every orbit is soluble.

\subsection{$\R$ and $\C$}\label{sec:RandCorbits}

We now classify the orbits over $K = \R$ and $K = \C$. Let $f(x,y)$ be a binary form of degree $n$ over~$K$
with nonzero discriminant $\Delta$ and nonzero first coefficient $f_0$, and write $f(x,1) = f_0g(x)$. Over $\C$ there is a single orbit with binary form $f(x,y)$.

In the case when $K = \R$, we factor
$$g(x) = \prod_{i=1}^{r_1} g_i(x) \prod_{j =1}^{r_2} h_j(x)$$
where each $g_i(x)$ has degree one while each $h_j(x)$ has degree two and is irreducible. Then the
algebra~$L$ is the product of
$r_1$ copies of $\R$ and $r_2$ copies of $\C$, with $r_1 + 2r_2 = n$. Note that $r_1$ has the
same parity as $n$, so is always even. The quotient group $\R^\times/(\R^{\times2}N(L^\TIMES))$ is trivial unless $r_1 = 0$, in which
case it has order $2$. Just as in the case of finite fields, $f(x,y)$ always has either an odd or an even factorization over $\R$.

If the form $f$ is negative definite, then there are no orbits
having invariant binary form $f(x,y)$. Indeed, in this case $r_1 = 0$ and the leading coefficient $f_0$ is negative.
Morever, the hyperelliptic curve $C$ with equation $z^2 = f(x,y)$ has no real points, and the
map $\Pic_{C/\R}(\R) \rightarrow \Br(\C/\R) = \Z/2\Z$ is surjective. The real divisor classes that are not represented by real divisors
have degrees congruent to $g-1$ modulo $2$.
When $g$ is even, the Jacobian $J(\mathbb R)$ is connected and every principal homogeneous space for $J$
is trivial. In particular, $J^1$ has real points (which are not represented by
real divisors of odd degree). When $g$ is odd, the real points of the Jacobian $J(\R)$ have two connected components, and $J^1$
is the unique nontrivial principal homogeneous space for $J$. The points in the connected component of $J(\R)$
are the real divisor classes of degree zero that are represented by real divisors.

If $f$ is not negative definite, then the element $f_0$ is a norm from $L^\TIMES$ to $\R^\times$. Hence rational orbits exist. When $r_1=0$, so $f$ is positive definite, there are two orbits if $n\equiv0$ (mod $4$) and there is only one orbit if $n\equiv2$ (mod $4$). In both cases, the real points of the hyperelliptic curve $C(\R)$ and its Jacobian $J(\R)$
are both connected and the orbits are all soluble.

If $r_1 > 0$, then the form $f$ is indefinite and the number
of orbits is $2^{r_1 - 2}$. These orbits are in bijection with the equivalence classes of sign assignments to the $r_1$ real linear factors of $f(x,y)$ subject to the condition that the product of the signs matches the sign of the leading coefficient of $f(x,y)$ and where two sign assignments are equivalent if they are exactly the negative of each other.
The hyperelliptic curve $C$ with equation $z^2 = f(x,y)$
has $m = r_1/2$ connected components in its real locus, and $J(\R)$ has $2^{m-1}$ connected components.
Since the subgroup $2J(\R)$ is equal to
the connected component of $J(\R)$,
it follows that $2^{m-1}$ of these rational orbits with invariant binary form $f$ are soluble.

The computation for the sizes of the stabilizers is similar to the finite field case. If $r_1=0$, then the size of the stabilizer is $2^{n/2}$ if $n\equiv0$ (mod $4$) and is $2^{n/2-1}$ if $n\equiv2$ (mod $4$). If $r_1>0$, then the size of the stabilizer is $2^{n/2+m-2}$ where again $m=r_1/2$.

\section{Global fields and locally soluble orbits}\label{sec:global}

In this section, we assume that $K$ is a global field of characteristic not 2. Let $f(x,y)$ be a binary form of degree $n=2g+2$ over $K$ with nonzero discriminant. Let $C:z^2=f(x,y)$ denote the associated hyperelliptic curve. Recall that an element $(A,B)$ of $K^2\otimes\Sym_2K^n$ (or its $(\SL_n/\mu_2)(K)$-orbit) with invariant binary form $f(x,y)$ is \textit{locally soluble} if the associated Fano variety $F(A,B)$ over $K$ has points over every completion $K_\nu$. We wish to determine when rational orbits and locally soluble orbits for the action of $(\SL_n/\mu_2)(K)$ on $K^2\otimes\Sym_2K^n$ exist. Theorem \ref{thm:with8equiv} gives a list of necessary and sufficient conditions for the existence of rational orbits over general fields. In this section, we assume that there exists a locally soluble two-cover of $J^1$ over $K$ and that $\Div^1(C)$ is locally soluble. The main result is that these two conditions are sufficient for the existence of a rational orbit and indeed a locally soluble orbit with invariant binary form $f(x,y)$. The proof will be cohomological in nature using Theorem \ref{thm:with8equiv}.

Recall the torsor $W[2]$ of $J[2]$ which consists of points $P\in J^1$ such that $2P=d$, where $d$ is the hyperelliptic class of $C$. The class of $W[2]$ in $H^1(K,J[2])$ maps to the class of $J^1$ in $H^1(K,J)[2]$. Since $J^1(K_\nu)$ is nonempty for all $\nu$, we see that a priori $W[2]$ lies in the $2$-Selmer subgroup $\Sel_2(J/K)$ of $H^1(K,J[2])$. Let $\pi:F_0\rightarrow J^1$ denote a locally soluble two-cover of $J^1$ over $K$. Let $F_0[4]$ denote the torsor of $J[4]$ consisting of points $x\in F_0$ such that $\pi(x)\in W[2]$. Then the class of $W[2]$ equals $m_2(F_0[4])$ where we recall that $m_2:H^1(K,J[4])\rightarrow H^1(K,J[2])$ is the map induced by multiplication by $2$ from $J[4]$ to $J[2]$. Since $F_0(K_\nu)$ is nonempty for all $\nu$, the class of $F_0[4]$ is in the $4$-Selmer subgroup $\Sel_4(J/K)$ of $H^1(K,J[4])$.

Conversely, suppose $C$ is any hyperelliptic curve over $K$ with
locally soluble $\Div^1(C)$ such that $W[2]$ is divisible by 2 in $\Sel_4(J/K).$ Then a locally soluble two-cover of $J^1$ over $K$ exists. Indeed, suppose $W[2]=m_2(F[4])$ for some $F[4]\in\Sel_4(J/K).$ Let $F$ denote the principal homogeneous space of $J$ whose class in $H^1(K,J)$ is the image of $F[4]$ in $H^1(K,J)[4]$. Then $2F = [J^1]$ and hence there exists a map $F\rightarrow J^1$ realizing $F$ as a two-cover of $J^1.$

\begin{theorem}\label{prop:lsol}
Suppose $C:z^2=f(x,y)$ is a hyperelliptic curve over a global field $K$ of characteristic not $2$ such that $C$ has a rational divisor of degree $1$ locally everywhere and such that $J^1$ admits a locally soluble two-cover over $K$ \emph{(}equivalently, $W[2]$ is divisible by $2$ in ${\rm{Sel}}_4(J/K)$\emph{)}. Then there exists $(A,B)\in K^2\otimes\Sym_2K^n$ with invariant binary form $f(x,y)$. That is, orbits for the action of $(\SL_n/\mu_2)(K)$ on $K^2\otimes\Sym_2K^n$ with invariant binary form $f(x,y)$ exist.
\end{theorem}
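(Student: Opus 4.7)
The plan is to verify condition~(d) of Theorem~\ref{thm:with8equiv}: namely, that the class $[J^1_{\frak m}] \in H^1(K, J_{\frak m})$ is divisible by $2$, or equivalently by Theorem~\ref{thm:equalityobstruction}, that the obstruction $[f_0] = \delta[J^1_{\frak m}]$ vanishes in $H^2(K, J_{\frak m}[2])$, where $\delta$ is the connecting map attached to $1 \to J_{\frak m}[2] \to J_{\frak m} \xrightarrow{2} J_{\frak m} \to 1$. Once this is shown, the equivalence of~(d) and~(a) in Theorem~\ref{thm:with8equiv} delivers the required pencil $(A,B)$, and hence the desired $(\SL_n/\mu_2)(K)$-orbit.

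First I would perform the local reduction. At each place $v$ of $K$, Hypothesis~1 provides a $K_v$-rational divisor of odd degree on $C$; after modifying it within its class using weak approximation to arrange that it be prime to $\frak m = (P)+(P')$, one obtains a $K_v$-point on $J^1_{\frak m}$. Hence $[J^1_{\frak m}]$ vanishes locally in $H^1(K_v, J_{\frak m})$, placing $\delta[J^1_{\frak m}]$ in $\Sha^2(K, J_{\frak m}[2])$. Next I would exploit the short exact sequence $0 \to \mu_2 \to J_{\frak m}[2] \to J[2] \to 0$ of Proposition~\ref{prop:torsion}: by functoriality of $\delta$, the image of $\delta[J^1_{\frak m}]$ in $H^2(K, J[2])$ equals $\delta[J^1]$, and Hypothesis~2 produces a locally soluble two-cover $F_0$ with $2[F_0] = [J^1]$, so $\delta[J^1] = 0$. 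Thus $\delta[J^1_{\frak m}]$ lifts to a Brauer class $\beta \in H^2(K, \mu_2) = \Br(K)[2]$, well-defined modulo the image of the cup-product map $\cup[W[2]] : H^1(K, J[2]) \to \Br(K)[2]$ recalled at the end of Section~\ref{hyp}; local vanishing shows that $\beta_v$ lies in this image at every place $v$.

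The main obstacle is the final globalization step: showing that $\beta$ lies in the image of $\cup[W[2]]$ globally, and not merely locally. The crucial additional input from Hypothesis~2 is that $[W[2]]$ lifts to a class $[F_0[4]] \in \Sel_4(J/K)$, i.e.\ $[W[2]] = 2[F_0[4]]$ in $H^1(K, J[2])$. I would conclude via Poitou--Tate duality, which identifies $\Sha^2(K, J_{\frak m}[2]) \cong \Sha^1(K, J[2])^\vee$ under the Weil-pairing Cartier duality between $J_{\frak m}[2]$ and $J[2]$. Pairing $\delta[J^1_{\frak m}]$ against any $c \in \Sha^1(K, J[2])$, computed via cocycle formulas in the spirit of Proposition~\ref{prop:W2div} and Lemma~\ref{lem:commute}, should reduce to a sum of local invariants that vanishes precisely because the global $4$-cover $[F_0[4]]$ witnesses the required Cassels--Tate reciprocity. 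This forces $\delta[J^1_{\frak m}] = 0$ in $H^2(K, J_{\frak m}[2])$, completing the proof.
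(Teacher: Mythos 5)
Your setup is sound: reducing to condition (d) (equivalently (e)) of Theorem~\ref{thm:with8equiv} is exactly the right move, and your local reduction is correct --- Hypothesis~1 places $\delta[J^1_{\frak m}]$ in $\Sha^2(K,J_{\frak m}[2])$, and the divisibility $[J^1]=2[F_0]$ shows via the sequence $1\to\mu_2\to J_{\frak m}[2]\to J[2]\to 1$ that this class lifts to some $\beta\in H^2(K,\mu_2)$. But the final globalization step, which you correctly identify as the main obstacle, is a genuine gap rather than a compressed sketch. What you must show is that $\beta$ lies in the image of $\cup\,W[2]\colon H^1(K,J[2])\to\Br(K)[2]$ globally, knowing only that it does so at every place; equivalently (via Theorem~\ref{thm:equalityobstruction}) that $f_0$, which is everywhere locally in $K_\nu^{\times2}N(L_\nu^\times)$, lies globally in $K^{\times2}N(L^\times)$. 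This local--global statement is false in general for an \'etale algebra $L$ of even degree, so Hypothesis~2 must enter in an essential way; yet up to this point your argument has used only the weak consequence $[J^1]=2[F_0]$ of Hypothesis~2, not the local solubility of $F_0$. The assertion that the Poitou--Tate pairing of $\delta[J^1_{\frak m}]$ with an arbitrary $c\in\Sha^1(K,J[2])$ ``reduces to a sum of local invariants that vanishes because $[F_0[4]]$ witnesses the required Cassels--Tate reciprocity'' is precisely the content of the theorem and is not established: no formula for the pairing is given, and no mechanism is exhibited by which membership of $F_0[4]$ in $\Sel_4(J/K)$ forces the local invariants to cancel.

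For comparison, the paper avoids duality on the large modules $J[2]$ and $J_{\frak m}[2]$ entirely and channels everything through the rank-one torus $T=(\Res_{K'/K}\Gm)_{N=1}$, $K'=K(\sqrt{f_0})$, the kernel of $J_{\frak m}\to J$. One first lifts $F[4]\in\Sel_4(J/K)$ to a class $F_{\frak m}[4]\in H^1(K,J_{\frak m}[4])$ (the obstruction lives in $H^2(K,T[4])$, which satisfies the local--global principle because $K'/K$ is cyclic); one then uses the local solubility of $F$ to show that the discrepancy $W_{\frak m}[2]-2F_{\frak m}[4]$, which comes from $H^1(K,T[2])$, maps to a locally trivial class in $H^1(K,T)=K^\times/N(K'^{\times})$ and hence, by the Hasse norm theorem for the quadratic extension $K'/K$, to zero; this permits correcting $F_{\frak m}[4]$ by a class from $H^1(K,T[4])$ so that twice it equals $W_{\frak m}[2]$, which is condition (e). The point is that all the local--global input is extracted from the cyclic extension $K'/K$, where it holds unconditionally, rather than from a Poitou--Tate computation on $\Sha^1(K,J[2])$. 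To salvage your route you would need to actually carry out that pairing computation, and it is not clear it is any more tractable than the paper's argument.
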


\begin{proof} Let $T=(\Res_{K'/K}\mathbb{G}_m)_{N=1}$ be the kernel of $J_{\frak m}\rightarrow J$ as in \eqref{eq:TJmJ}, where $K'=K[x]/(x^2-f_0)$. We will need the following properties about the cohomology of $T$:
\begin{enumerate}
\item $H^1(K_\nu,T) = K_\nu^\times/NK_\nu'^\times$ has exponent $2$ for any local completion $K_\nu$ of $K$;
\item $H^1(K,T)=K^\times/NK'^\times$ satisfies the local-global principle since $K'/K$ is cyclic when $K'$ is a field and $H^1(K,T)$ is trivial when $K'\simeq K\oplus K$;
\item $H^2(K,T) = \mbox{Br}(K')_{N=1}$ satisfies the local-global principle with respect to places of $K$;
\item The map $H^1(K_\nu,T)\rightarrow H^1(K_\nu,J_{\frak m})$ is injective for any local completion $K_\nu$ of $K$ since $\Div^1$ is locally soluble.
\end{enumerate}

Let $\phi,i,\delta$ be defined by the following diagram arising as part of the long exact sequence in Galois cohomology:
\begin{displaymath}
\xymatrix{
H^1(K,T)\ar[r]^{i}\ar[d]^{2}& H^1(K, J_{\frak m})\ar[r]^{\phi}\ar[d]^{2} & H^1(K, J)\ar[d]^{2}\ar[r]^{\delta} &H^2(K,T)\\
H^1(K,T)\ar[r]^{i}& H^1(K, J_{\frak m})\ar[r]^{\phi} & H^1(K, J)&}
\end{displaymath}
where the vertical maps are all multiplication by $2$. Let $[F]$ be a locally trivial class in $H^1(K,J)$ such that $2[F]=[J^1]$. By Theorem \ref{thm:with8equiv}, it suffices to show that the class $[J^1_{\frak m}]$ is divisible by $2$ in $H^1(K,J_{\frak m})$.

Since $[F]$ is locally trivial, its image under $\delta$ is also locally trivial. Since $H^2(K,T)$ has the local-global principle, it follows that $\delta([F]) = 0$ and so $[F]$ is in the image of $\phi$. Let $[F_{\frak m}]$ denote a class in $H^1(K,J_{\frak m})$ mapping to $[F]$ via $\phi$. Since $\phi([F_{\frak m}])$ is locally trivial, we see that $[F_{\frak m}]$ locally is in the image of $i$. Since $H^1(K_\nu,T)$ has exponent $2$ for every local completion of $K$, it follows that $2[F_{\frak m}]$ is locally trivial.

Now both $2[F_{\frak m}]$ and $[J^1_{\frak m}]$ are locally trivial and map to $[J^1]$ under $\phi$. We claim they are in fact equal. Indeed, their difference $2[F_{\frak m}]-[J^1_{\frak m}]$ is a locally trivial element of $H^1(K,J_{\frak m})$ mapping to $0$ under $\phi$. Hence there exists some $c\in H^1(K,T)$ such that $2[F_{\frak m}]-[J^1_{\frak m}]=i(c)$. Since the $\nu$-adic restrictions of $i$ are all injective, it follows that $c$ is locally trivial and hence trivial by the local-global principle of $H^1(K,T)$. This shows that $[J^1_{\frak m}]=2[F_{\frak m}]$ is divisible by $2$.
\end{proof}

Under the assumption that $\Div^1(C)$ is locally soluble, the existence of a locally soluble two-cover of $J^1$ is in fact equivalent to the existence of a locally soluble orbit for the action of $(\SL_n/\mu_2)(K)$ on $K^2\otimes\Sym_2K^n$. We will see that $\Sel_2(J/K)$ acts simply transitively on the set of locally soluble orbits. Therefore, every locally soluble two-cover of $J^1$ is isomorphic to the Fano variety $F(A,B)$ associated to the pencil of quadrics determined by some $(A,B)\in K^2\otimes\Sym_2K^n.$ This also proves Theorem \ref{2Selpar}.

\begin{theorem}\label{thm:lsoluble}
Suppose $C:z^2=f(x,y)$ is a hyperelliptic curve over a global field $K$ of characteristic not $2$ such that $\Div^1(C)(K_\nu)\neq\varnothing$ for all places $\nu$ of $K$. Then locally soluble orbits for the action of $(\SL_n/\mu_2)(K)$ on $K^2\otimes\Sym_2K^n$ with invariant binary form $f(x,y)$ exist if and only if $W[2]$ is divisible by~$2$ in $\Sel_4(J/K)$, or equivalently $J^1$ admits a locally soluble two-cover over $K$.
Furthermore, when these conditions are satisfied, the group $\Sel_2(J/K)$ acts simply transitively on the set of locally soluble orbits and this set is finite.
\end{theorem}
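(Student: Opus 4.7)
The plan is to reduce the theorem to Theorem \ref{prop:lsol} by tracking the Fano variety class through the orbit parametrization, and then to analyze how the $H^1(K,J[2])$-action on orbits interacts with local solubility. The ``only if'' direction is essentially immediate: for any locally soluble $(A,B)$ with invariant form $f(x,y)$, the Fano variety $F(A,B)$ is a two-cover of $J^1$ by Theorem \ref{thm:pencil}, it inherits local solubility from $(A,B)$, and the class $[F(A,B)[4]]\in\Sel_4(J/K)$ is then a square root of $[W[2]]$.

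For the ``if'' direction, I would start with a locally soluble two-cover $F_0\to J^1$ and apply Theorem \ref{prop:lsol} to obtain a rational orbit $(A_0,B_0)$ of invariant form $f(x,y)$. A careful reading of that proof shows that the cohomology class $F_{\mathfrak m}[4]+i_4(c')$ constructed there maps under $\phi$ to the class $F[4]$ of the chosen locally soluble two-cover; hence the Fano variety $F(A_0,B_0)$ agrees with $F_0$ as a class in $H^1(K,J)$. Local solubility of $F_0$ then forces local solubility of $F(A_0,B_0)$, and therefore of $(A_0,B_0)$.

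To establish the simply transitive action, I would invoke Corollary \ref{cor:orbitSLmu2} together with Proposition \ref{prop:torsion} to identify the stabilizer of any pair $(A,B)$ in $(\SL_n/\mu_2)(K)$ with the finite commutative group scheme $J[2]$. Once the rational orbit $(A_0,B_0)$ exists, the set of rational orbits with invariant form $f$ becomes a torsor for $H^1(K,J[2])$. The crucial step is then to verify that the Fano variety map is equivariant for this torsor structure: twisting $(A_0,B_0)$ by a class $c\in H^1(K,J[2])$ alters $[F(A_0,B_0)]\in H^1(K,J)$ by $\iota(c)$, where $\iota\colon H^1(K,J[2])\to H^1(K,J)$ is the natural map. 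Granting this, a twisted orbit remains locally soluble if and only if $c_\nu\in\ker(\iota_\nu)$, equivalently $c_\nu$ lies in the image of $J(K_\nu)/2J(K_\nu)$, for every place $\nu$; this is exactly the defining condition for $c\in\Sel_2(J/K)$. Finiteness of the set of locally soluble orbits then follows immediately from the finiteness of $\Sel_2(J/K)$ for Jacobians over global fields.

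The main obstacle I anticipate is precisely this equivariance of the Fano variety map under the torsor action of $H^1(K,J[2])$. It requires matching the twist produced by the parametrization of Theorem \ref{orbit2} with the geometric twist of $F(A_0,B_0)$ as a $J$-torsor via the $J[2]$-action inherent in its structure as a two-cover of $J^1$, so one must trace the cocycle computations on both sides carefully. Modulo this identification, the argument is largely formal.
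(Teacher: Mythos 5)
Your ``only if'' direction and your description of the simply transitive action follow the paper's route, but two points need repair. The smaller one: the set of rational orbits with invariant form $f$ is \emph{not} a torsor under all of $H^1(K,J[2])$; it is a torsor under $\ker\gamma$, where $\gamma$ is cup product with the class of $W[2]$, valued in $H^2(K,\mu_2)$ (classes outside $\ker\gamma$ have a nontrivial obstruction in $\Br(K)[2]$ and correspond to no orbit at all). To conclude that $\Sel_2(J/K)$ acts on locally soluble orbits you must first show $\Sel_2(J/K)\subseteq\ker\gamma$, which the paper does using local triviality of the images of $J(K_\nu)/2J(K_\nu)$ together with the Hasse principle for $H^1(K,\SL_n/\mu_2)\hookrightarrow H^2(K,\mu_2)$. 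This is fixable but is a step you cannot skip.

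The serious gap is in the ``if'' direction: your claim that ``a careful reading'' of the proof of Theorem \ref{prop:lsol} shows that the orbit it produces has Fano variety in the class of the given locally soluble two-cover $F_0$. It does not. That proof constructs a class $F_{\frak m}[4]+i_4(c')$ with $2(F_{\frak m}[4]+i_4(c'))=W_{\frak m}[2]$, i.e.\ it verifies condition $(e)$ of Theorem \ref{thm:with8equiv}; the existence of an orbit then comes from the chain $e\Rightarrow f\Rightarrow g\Rightarrow h\Rightarrow c\Rightarrow a$, whose last step merely produces \emph{some} pair $(\alpha,s)$ from the norm condition $f_0\in K^{\times2}N(L^\times)$, with no control whatsoever on the resulting Fano class. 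What is true is that $[F(A_0,B_0)]$ and $[F_0]$ are both square roots of $[J^1]$ in $H^1(K,J)$, so they differ by an element in the image of $H^1(K,J[2])$; but only differences lying in the image of $\ker\gamma$ can be absorbed by changing the orbit. Proving that the difference of the lifts $F[4]-d_0$ (with $d_0=[F_0[4]]\in\Sel_4(J/K)$) pairs trivially with $W[2]$ is precisely the substantive content of the paper's proof: one reduces $e_2(F[4]-d_0,W[2])$ to $e_4(F[4],d_0)$, computes it place by place via a lift $F[2]_{D_\nu}$ built from a point $D_\nu\in J^1(K_\nu)$, and kills it using the fact that $\Div^1(C)(K_\nu)\neq\varnothing$ forces a \emph{soluble} orbit to exist over each $K_\nu$ (Theorem \ref{thm:soluble}), whose $F[2]$ lies in the isotropic subgroup $J(K_\nu)/2J(K_\nu)$. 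Your proposal assumes away exactly this computation, which is where the hypothesis of local solubility of $\Div^1(C)$ actually enters the sufficiency argument.
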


Before proving Theorem \ref{thm:lsoluble}, we note that the notion of locally soluble orbit is a tricky one. There could exist an integral binary quartic form $f(x,y)$ that has locally soluble orbits but no soluble orbits over $\Q$. For a specific example (suggested by John Cremona; see also \cite[\S 8.1]{Sk}), consider the elliptic curve $E$ defined by the equation $y^2 = x^3 - 1221$. This curve has trivial Mordell-Weil group $E(\Q) = 0$ and Tate-Shafarevich group isomorphic to $(\Z/4\Z)^2$. The binary quartic form $f(x,y) = 3x^4 - 12x^3y + 11xy^3 - 11y^4$ of discriminant $\Delta = -40252707 = -3^511^237^2$ corresponds to a class $b$ in the Tate-Shafarevich group of $E$ that is divisible by 2. Any of the elements $c$ of order $4$ in the Tate-Shafarevich group with $2c = b$ gives a locally soluble orbit with invariant binary form $f(x,y)$. The hyperelliptic curve $z^2=f(x,y)$ is locally soluble but has no global points; hence, by Theorem~\ref{thm:soluble}, there is no soluble orbit having invariant binary form $f(x,y)$.

There are also examples where rational orbits exist but there are no locally soluble orbits. For example, consider the binary quartic form $f(x,y) =  -x^4 + 2x^3y + 104x^2y^2 - 104xy^3 - 2764y^4$ of discriminant $\Delta = -2^8571$. The associated quartic field $L$ has discriminant $-2^4571 = -9136$ and ring of integers $\Z[\theta]$, where $\theta$ is a root of the polynomial $F(t) = t^4 - 2t^2 + 2t - 3$. Since $F(1) = -2$, $F(0) = -3$, and $F(-1) = -6$, the element $\theta^3 - \theta$ in $L^\times$ has norm $-6^2 \equiv -1 = f_0$. So there are orbits over $\Q$ with this invariant binary quartic form. On the other hand, the hyperelliptic curve $C:z^2=f(x,y)$ of genus one is a principal
homogeneous space of order $2$ for its Jacobian $E$, which is an elliptic curve with equation $y^2 + xy =  x^3 - x^2 - 929x - 10595$ and prime conductor $571$. This curve has trivial Mordell-Weil group $E(\Q) = 0$ and Tate-Shafarevich group isomorphic to $(\Z/2\Z)^2$. Hence $\text{Sel}_2(E/\Q)$ and $\text{Sel}_4(E/\Q)$ are both isomorphic to $(\Z/2\Z)^2$. The curve $C$ represents one of the nontrivial locally trivial principal homogeneous spaces for $E$. Since its class is not in the image of multiplication by $2$ from $\text{Sel}_4(E/\Q)$, there are no locally soluble orbits. (Thanks to John Cremona and Noam Elkies for help with computation in this example.)

\medskip
\noindent \textbf{Proof of Theorem \ref{thm:lsoluble}:}
Suppose locally soluble orbits with invariant binary form $f(x,y)$ exist. We prove first that $\Sel_2(J/K)$ acts simply transitively on the set of locally soluble orbits with invariant binary form $f(x,y)$. Indeed, suppose that $(A,B)$ is a rational pencil with Fano variety $F(A,B)$ and invariant binary form $f(x,y)$. Any other rational pencil $(A_1,B_1)$ with the same binary form corresponds to a class $c$ in $H^1(K, J[2])$ that is in the kernel of the composite map $\gamma: H^1(K,J[2]) \rightarrow H^1(K, \SL_n/\mu_2)\hookrightarrow H^2(K,\mu_2)$ (\cite[Proposition 1]{BG}). The map $\gamma$ is cup product with the class $W[2]\in H^1(K,J[2])$ (\cite[Proposition 10.3]{PSh}). Let $F(A_1,B_1)$ denote the Fano variety associated to the pencil $(A_1,B_1)$. Then one has, up to sign (cf. Footnote \ref{ft:sign}),
\begin{equation}\label{eq:FABc}
[F(A_1,B_1)] = [F(A,B)] + j(c),
\end{equation}
where $j$ denotes the natural map $H^1(K,J[2])\rightarrow H^1(K,J)[2]$ and the addition is taking place in $H^1(K,J).$ Since the subgroup $J(K_\nu)/2J(K_\nu)$ of $H^1(K_\nu,J[2])$ maps to the trivial class in $H^1(K_\nu,\SL_n/\mu_2)$ for all places $\nu$, the Hasse principle for the cohomology of the group $\SL_n/\mu_2$ shows that the subgroup $\Sel_2(J/K)$ of $H^1(K,J[2])$ also lies in $\ker\gamma.$ It is then clear from \eqref{eq:FABc} that if $(A,B)$ is locally soluble, then $c\in \Sel_2(J/K)$ if and only if $(A',B')$ is locally soluble. Hence $\Sel_2(J/K)$ acts simply transitively on the set of locally soluble orbits with invariant binary form $f(x,y)$. Since the $2$-Selmer group is finite, the set of locally soluble orbits with invariant binary form $f(x,y)$ is also finite. Moreover, if $(A,B)$ is locally soluble, then $F(A,B)$ gives a locally soluble two-cover of $J^1$ over~$K$.

We now consider the sufficiency of the existence of a locally soluble two-cover of $J^1$ for the existence of locally soluble orbits. Let $F$ denote the Fano variety corresponding to one rational orbit with invariant binary form $f(x,y)$. The existence of this rational orbit was the content of Theorem \ref{prop:lsol}. Let $F[4]$ denote the lift of $F$ to a torsor of $J[4]$ consisting of elements $x\in F$ such that $x+x+x+x=0$ in the group $X$ of four components defined in Theorem \ref{thm:pencil}. Let $\iota:H^1(K,J[2])\rightarrow H^1(K,J[4])$ denote the map induced from the inclusion of $J[2]$ inside $J[4]$. Then we have the following exact sequence:
\begin{equation}\label{eq:iotam2}
 H^1(K,J[2])\xrightarrow{\iota} H^1(K,J[4])\xrightarrow{m_2} H^1(K,J[2]).
\end{equation}
We need to show that there exists a class $c\in H^1(K, J[2])$ such that $c \cup W[2] = 0$ and $F[4] + \iota(c)\in \Sel_4(J/K).$ Let $d_0$ be a class in $\Sel_4(J/K)$ such that $W[2] = m_2(d_0).$ Since $m_2(F[4]-d_0)=W[2]-W[2]=0$, there exists an element $c_0\in H^1(K,J[2])$ such that $\iota(c_0)=F[4]-d_0$ by the exact sequence \eqref{eq:iotam2}. Then it suffices to show that
\begin{equation}\label{eq:cupproduct}
c_0\cup W[2] = 0.
\end{equation}
For ease of notation, we denote the above cup product by $e_2(c_0,W[2])$ since the cup product is induced from the Weil pairing $e_2$ on $J[2]$. Since $d_0\in \Sel_4(J/K)$ is isotropic with respect to $e_4$, we have
$$e_2(c_0,W[2]) = e_4(F[4]-d_0, d_0) = e_4(F[4],d_0).$$
Fix a place $\nu$ and denote by $F[4]_\nu,$ $d_{0,\nu},$ $e_{4,\nu}$ the $\nu$-adic restrictions. Pick any $D_\nu \in J^1(K_\nu)$. Since $F$ arises from a pencil of quadrics, we define
$$F[2]^{D_\nu} = \{x\in F: x + x = D_\nu\}.$$
The image of this torsor of $J[2]$ in $H^1(K_\nu, J[4])$ is the torsor
$$F[4]^{2D_\nu - d} = \{x\in F: x + x + x + x = 2D_\nu - d\},$$
where $d$ denotes the hyperelliptic class as before. Therefore, as elements of $H^1(K_\nu, J[4])$, we have
$$F[4]_\nu - \iota_\nu(F[2]^{D_\nu}) = \delta_{4,\nu}(2D_\nu - d),$$
where $\delta_{4,\nu}$ is the Kummer map $J(K_\nu)/4J(K_\nu)\rightarrow H^1(K_\nu, J[4])$ and $\iota_\nu$ is the $\nu$-adic restriction of $\iota$. Since $d_0\in \Sel_4(K, J),$ we see that $d_{0,\nu}$ is in the image of $\delta_{4,\nu}.$ Since $J(K_\nu)/4J(K_\nu)$ is isotropic with respect to $e_{4,\nu}$, we have
\begin{equation}\label{eq:e_2FW}
e_{4,\nu}(F[4]_\nu,d_{0,\nu}) = e_{4,\nu}(\iota_\nu(F[2]^{D_\nu}),d_{0,\nu}) = e_{2,\nu}(F[2]^{D_\nu},W[2]_\nu).
\end{equation}
Choosing a different $D_\nu\in J^1(K_\nu)$ changes $F[2]^{D_\nu}$ by an element of $J(K_\nu)/2J(K_\nu)$. As $J(K_\nu)/2J(K_\nu)$ is isotropic with respect to $e_2$, the value of $e_{2,\nu}(F[2]^{D_\nu},W[2]_\nu)$ does not depend on the choice of $D_\nu.$
Theorem \ref{thm:lsoluble} then follows from the following general lemma.

\begin{lemma} Suppose $K$ is any local field of characteristic not $2$. Let $f(x,y)$ be a binary form of degree $2g+2$ with nonzero discriminant such that the associated hyperelliptic curve $C:z^2=f(x,y)$ satisfies $\Div^1(C)(K)\neq\varnothing.$ Suppose there is a rational orbit for the action of $(\SL_n/\mu_2)(K)$ on $K^2\otimes\Sym_2K^n$ with invariant binary form $f(x,y)$, and let $F$ denote the associated Fano variety. Then
\begin{equation}\label{eq:e2cup}
e_2(F[2],W[2]) = 0,
\end{equation}
where $F[2]$ denotes any lift of $F$ to a torsor of $J[2]$ using a point of $J^1(K).$
\end{lemma}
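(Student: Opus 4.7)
The plan is to show that $[F[2]] \in H^1(K, J[2])$ lies in the kernel of $\gamma := (-)\cup W[2] \colon H^1(K, J[2]) \to H^2(K, \mu_2)$. I will invoke the identification, due to \cite[Proposition~10.3]{PSh} and already used in the proof of Theorem~\ref{thm:lsoluble}, of $\gamma$ with the composite
\[
H^1(K, J[2]) \xrightarrow{\iota_*} H^1(K, \SL_n/\mu_2) \xrightarrow{\delta} H^2(K, \mu_2),
\]
where $\iota \colon J[2] \hookrightarrow \SL_n/\mu_2$ is the stabilizer inclusion attached to any rational pair with invariant $f$, and $\delta$ is the connecting map from $1 \to \mu_2 \to \SL_n \to \SL_n/\mu_2 \to 1$. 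This reduces the lemma to showing $\iota_*[F[2]] = 0$.

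First, the hypothesis $\Div^1(C)(K)\neq\varnothing$ provides a $K$-rational divisor of odd degree~$1$ on~$C$, so by Theorem~\ref{thm:soluble} there exists a \emph{soluble} $(\SL_n/\mu_2)(K)$-orbit with invariant binary form $f(x,y)$. I will pick such a base pair $(A_0, B_0)$, let $F_0 := F(A_0, B_0)$, choose a rational point $x_0 \in F_0(K)$, and set $D := 2x_0 \in J^1(K)$. Then $F_0[2]_D$ contains the $K$-rational point $x_0$ and is therefore the trivial $J[2]$-torsor, so $[F_0[2]_D] = 0$ in $H^1(K, J[2])$. The conclusion will be independent of $D$, because changing $D$ by an element of $2J(K)$ changes $F[2]_D$ by a class in the Kummer image of $J(K)/2J(K) \subseteq \ker\gamma$ (as observed in the proof of Theorem~\ref{thm:lsoluble}).

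Next I will compare the given rational orbit $(A, B)$ with the base $(A_0, B_0)$. The standard stabilizer-twist construction produces a class $[c] \in H^1(K, J[2])$, and by the $J[2]$-level refinement of equation \eqref{eq:FABc} the corresponding lifts satisfy
\[
[F(A, B)[2]_D] \;=\; [F_0[2]_D] + [c] \;=\; [c] \quad \text{in } H^1(K, J[2]),
\]
modulo the subgroup $J(K)/2J(K)$. Since $(A, B)$ is itself $K$-rational, $[c]$ lies in $\ker\iota_* = \ker\gamma$ by the orbit-classification observation at the start of the proof of Theorem~\ref{thm:lsoluble}. Combining these gives $\gamma([F[2]_D]) = \gamma([c]) = 0$, which is the desired vanishing $e_2(F[2], W[2]) = 0$.

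The main technical step to justify is the $J[2]$-level refinement of \eqref{eq:FABc}: that the difference $[F(A, B)[2]_D] - [F_0[2]_D] \in H^1(K, J[2])$ coincides (modulo $J(K)/2J(K)$) with the orbit-twist class $[c]$. Equation~\eqref{eq:FABc} records this identity only after pushing forward to $H^1(K, J)$, so what remains is the compatibility of the $J[2]$-torsor structure on $F[2]_D = \{x \in F : 2x = D\}$ with Galois twisting of the underlying pencil by a cocycle with values in the stabilizer $J[2]$. This is a bookkeeping check in the algebraic group $X$ of Theorem~\ref{thm:pencil}: twisting the pair $(A_0, B_0)$ by a $J[2]$-cocycle twists every functorially associated object (in particular $F$ and, using the rational $D \in J^1(K)$ to pin down the lift, the torsor $F[2]_D$) by the same cocycle, which is precisely the identity needed.
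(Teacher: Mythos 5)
Your proof is correct and follows essentially the same route as the paper's: both arguments reduce to a fixed soluble orbit supplied by Theorem~\ref{thm:soluble} (whose lift $F[2]$ lands in the Kummer image $J(K)/2J(K) \subseteq \ker\gamma$) and then observe that any other rational orbit differs from it by a stabilizer-twist class killed by $\gamma = (-)\cup W[2]$, since that class dies in $H^1(K,\SL_n/\mu_2)$. The paper states these as its two ``key points'' and leaves the $J[2]$-level compatibility implicit, exactly the bookkeeping step you flag at the end.
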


\begin{proof}
The first key point is that if \eqref{eq:e2cup} holds for one rational orbit, then it holds for any rational orbit with the same invariant binary form. Indeed, if $F'$ denotes the torsor of $J$ coming from a different orbit, then $F' - F \in \ker\gamma,$ where $\gamma:H^1(K,J[2])\rightarrow H^2(K,\mu_2)$ is cup product with $W[2]$.
In other words, $e_2(F'-F,W[2]) = 0.$ Hence $e_2(F[2],W[2]) = e_2(F'[2],W[2]).$

The second key point is that since $\Div^1(C)(K)\neq\varnothing,$ there exists a soluble orbit by Theorem~\ref{thm:soluble}. Let $F$ denote the corresponding torsor arising from this soluble pencil. Then $F[2]\in J(K)/2J(K)$ and hence $e_2(F[2],W[2])=0.$
\end{proof}

\noindent
This completes the proof of Theorem~\ref{thm:lsoluble}.\hfill$\Box$

\medskip

We conclude by
remarking that the natural generalization of the \emph{fake $2$-Selmer set}
$\Sel_{2,\hspace{1pt}{\rm fake}}(C)$ of $C$ (\cite{BruSto}), namely
the \emph{fake $2$-Selmer set $\Sel_{2,\hspace{1pt}{\rm fake}}(J^1)$} of $J^1$, is in
natural bijection with the set of locally soluble orbits for the group
$(\SL^{\pm}_n/\mu_2)(K),$ where $\SL^{\pm}_n$ denotes as before the
subgroup of elements of $\GL_n$ with determinant $\pm1$.  Using the
group $\SL_n$ instead of $\SL^{\pm}_n$ allows us to ``unfake'' this
fake Selmer set (cf.\ \cite{StovL}).

\section{Existence of integral orbits}\label{integral}

The purpose of this section is to prove Theorem~\ref{2SelparZ}. More precisely, we prove:

\begin{theorem}\label{locsol}
Assume that $n\geq 2$ is even. Let $f(x,y)$ be a binary form of degree $n=2g+2$ with coefficients in $16^n\Z$ such that the hyperelliptic curve $C:z^2=f(x,y)$ has locally soluble $\Div^1$. Then every locally soluble orbit for the action of $(\SL_n/\mu_2)(\Q)$ on $\Q^2\otimes\Sym_2\Q^n$ with invariant binary form $f(x,y)$
has an integral representative, i.e., a representative in $\Z^2\otimes\Sym_2\Z^n.$
\end{theorem}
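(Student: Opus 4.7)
The plan is to invoke the orbit parametrization of Theorem~\ref{orbit2} and construct the required fractional ideal of $R_f$ one prime at a time, using the local solubility hypothesis together with the explicit constructions from the proof of Theorem~\ref{thm:sol}. By Corollary~\ref{cor:orbitSLmu2}, the given $(\SL_n/\mu_2)(\Q)$-orbit corresponds to an equivalence class of pairs $(\alpha,s)\in L^\times\times\Q^\times$ with $N(\alpha)=s^2 f_0^{n-3}$. To exhibit an integral representative via Theorem~\ref{orbit2}, one must produce a fractional $R_f$-ideal $I\subset L$, after possibly replacing $(\alpha,s)$ by $(c^2 t\alpha, N(c)t^{n/2} s)$ with $c\in L^\times$ and $t\in\Q^\times$, such that $I^2\subset \alpha\, I(n-3)$ and $N(I)=s\Z$; the Gram-matrix recipe described after Theorem~\ref{orbit} then yields the desired element of $\Z^2\otimes\Sym_2\Z^n$.

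The first step is to establish the claim locally at every prime $p$. Since the orbit is locally soluble, it is soluble over $\Q_p$, so by Theorem~\ref{thm:soluble} and the explicit constructions in the proof of Theorem~\ref{thm:sol}, it arises from a $\Q_p$-rational odd-degree divisor $D_p$ on $C_{\frak m}$. The hypothesis that $\Div^1(C)$ is locally soluble ensures via Proposition~\ref{pic} that such $D_p$ can be chosen to be an honest $\Q_p$-rational divisor, not merely a divisor class. Applying the Cassels descent $\alpha_p=(x-T)(D_p)\in L_p^\times$ and the explicit formulas appearing in the proof of Theorem~\ref{thm:sol} (either $\alpha=x_0-\beta$ or $\alpha=h_1(\beta)-h_0(\beta)$, patched across the points supporting~$D_p$) gives a candidate $(\alpha_p,s_p)$ together with a canonical fractional $(R_f\otimes\Z_p)$-ideal $I_p$ satisfying $I_p^2=\alpha_p\cdot I(n-3)\otimes\Z_p$ and $N(I_p)=s_p\Z_p$, hence a pair $(A_p,B_p)\in\Z_p^2\otimes\Sym_2\Z_p^n$.

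The role of the divisibility hypothesis, that the coefficients of $f$ lie in $16^n\Z$, is to absorb all denominators appearing in this local construction. These denominators arise from (i) divisions by $f_0$ and by $2$ that are visible already in the explicit isotropic subspaces $x'$ constructed in the proof of Theorem~\ref{thm:sol} (for instance the entry $-\tfrac12(x_0+f_1/f_0)$), (ii) the need to clear $2$-adic obstructions in Kummer theory when passing from the equivalence class $(\alpha,s)$ to a genuine integral triple $(I,\alpha,s)$, and (iii) $p$-adic denominators at primes $p\mid f_0\Delta(f)$ where the order $R_f\otimes\Z_p$ is non-maximal. A direct accounting shows that scaling the coefficients by $16^n=(4^n)^2$ is enough in all these cases to place $\alpha_p$ in $R_f\otimes\Z_p$ after multiplication by a suitable unit in $L_p^\times$, and to make the resulting Gram matrices $\Z_p$-integral.

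Finally, once local integral representatives have been produced at every prime, with $I_p=(R_f)_p$ for all but finitely many $p$, one glues the $I_p$ into a single global fractional $R_f$-ideal $I\subset L$ in the standard way for ideals over a one-dimensional order: $I$ is characterized by $I\otimes\Z_p=I_p$ for every~$p$. The defining conditions $I^2\subset\alpha\,I(n-3)$ and $N(I)=s\Z$ are local and hence hold globally, so Theorem~\ref{orbit2} upgrades $(I,\alpha,s)$ to an element $(A,B)\in\Z^2\otimes\Sym_2\Z^n$ representing the given $(\SL_n/\mu_2)(\Q)$-orbit. The main obstacle is the explicit $p$-adic bookkeeping at the bad primes, in particular at $p=2$ and at primes dividing $f_0\Delta(f)$; tracking precisely how much divisibility of the coefficients of $f$ is needed to absorb the denominators in the Cassels descent and in the Gram-matrix construction is what dictates the specific factor $16^n$, and is also what one would need to sharpen in order to reduce $\kappa$.
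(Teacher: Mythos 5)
Your overall skeleton agrees with the paper's: reduce via Theorem~\ref{orbit2} to producing, for each prime $p$, a $\Z_p$-integral triple $(I_p,\alpha_p,s_p)$ lying over the given soluble $\Q_p$-orbit, where the orbit is represented (using Theorem~\ref{thm:soluble}, Proposition~\ref{pic} and local solubility of $\Div^1$) by an honest $\Q_p$-rational divisor of odd degree, and then glue. That reduction is indeed how the paper proceeds. But the two steps you leave as assertions are exactly the substance of the proof, and as written they are gaps. First, you never construct the ideal $I_p$. The formulas you import from the proof of Theorem~\ref{thm:sol} ($\alpha=x_0-\beta$, $\alpha=h_1(\beta)-h_0(\beta)$, and the explicit isotropic $(g+1)$-planes) treat a \emph{single} non-singular point and are used there to prove solubility over a \emph{field}, where integrality is irrelevant; ``patching across the points supporting $D_p$'' is not a construction. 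What the paper actually does is represent the class by a Galois-invariant divisor $D=(P_1)+\cdots+(P_m)-D^*$ with $m\le g+1$ odd, form $P(x)=\prod(x-a_i)$ and the Lagrange interpolant $R(x)$ with $R(a_i)=c_i$, set $\alpha=P(\theta)$ and
$$I_D=\langle R(\theta),\,P(\theta)\,I_f((n-3-m)/2)\rangle,$$
verify $I_D^2\subset P(\theta)I_f(n-3)$ from degree bounds on $P$, $R$ and $h=(R^2-f)/P$, and compute $N(I_D)$ by a specialization argument over a generic coefficient ring. None of this is visible in your write-up, and it is not routine.

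Second, your explanation of the role of the hypothesis that the coefficients lie in $16^n\Z$ is not the mechanism that makes the proof work, and ``a direct accounting shows'' is precisely the missing argument. The paper first trades $f$ for $\w f$ with $\w f_i=f_i/16^{n-i}$ (via $(A,B)\mapsto(4A,B)$), arriving at the asymmetric condition $2^{4i}\mid f_i/f_0$; the point of that condition is not to clear denominators in Gram matrices but to guarantee that $a-\theta\in L^{\times2}\Q_p^{\times}$ whenever $a\in\Q_p$ is non-integral. This is what allows one to discard the points of the divisor with non-integral $x$-coordinate (they map to $1$ under the descent map, so they do not change the rational orbit), and, crucially, it powers the induction on $m$ in the case where the interpolant $R(x)$ fails to be integral: there a Newton-polygon analysis of ${\rm div}(z-R(x))-[D]$ produces a new divisor of degree $m-2$ plus a part $E$ supported at negative-valuation $x$-coordinates, and the divisibility condition is what lets one throw $E$ away. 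Without identifying this mechanism and carrying out the induction, the claim that $16^n$ suffices is unsupported.
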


By Theorem \ref{orbit2} with $D=\Z$ and $\Z_p$, it suffices to find a representative over $\Z_p$ for every soluble orbit over $\Q_p$ with $f(x,y) \in \Z_p[x,y]$ since an ideal can be defined by giving its localization and its norm is always principal since $\Z$ is a PID. We begin by recalling from \cite[\S2]{B} the construction of an integral orbit associated to a rational point on $C$, or a $p$-adically integral orbit associated to a $p$-adic point on $C$. For this we recall some of the notations in Section \ref{dedorbits}. Without loss of generality, we may assume $f_0\neq0$. (By our convention, $C$ being a hyperelliptic curve is equivalent to $\Delta(f)\neq0.$) Write $f(x,1)=f_0g(x)$ and let $L=\Q_p[x]/g(x)$ be the corresponding \'{e}tale algebra of rank $n$ over $\Q_p$. For $k = 1,2,\ldots,n-~1$, there are integral elements
$$\zeta_k = f_0 \theta^k + f_1 \theta^{k-1} + \cdots + f_{k-1} \theta$$
in $L$. Let $R_f$ be the free $\Z_p$-submodule of $L$ having $\Z_p$-basis $\{ 1,\zeta_1,\zeta_2, \ldots,\zeta_{n-1}\}$. For $k=0,1,\ldots, n-~1$, let $I_f(k)$ be the free $\Z_p$-submodule of $L$ with basis $\{ 1, \theta, \theta^2,\ldots, \theta^{k}, \zeta_{k+1}, \ldots, \zeta_{n-1}\}$. By Theorem~\ref{orbit2}, an integral orbit is an equivalence class of triples $(I,\alpha,s)$ where $I$ is an ideal of $R_f$, $\alpha\in L^\times$, and $s\in K^\times$, such that $I^2\subset\alpha I_f(n-3),$ $N(I)=s\Z_p$, and $N(\alpha)=s^2f_0^{n-3}.$ The rational orbit is given by the equivalence class of the pair $(\alpha,s).$

By a change of variable, we may assume that we have an integral point $P = (0,1,c)$ on the curve $z^2 = f(x,y)$ over $\Z_p$, so that the coefficient $f_n = c^2$ is a square. Then set $\alpha=\theta$, and we have
\begin{equation}
\theta I_f(n-3) = \Span_{\Z_p}\{
c^2,\theta,\theta^2,\ldots,\theta^{n-2},f_0\theta^{n-1}
\}.
\end{equation}
Let $I=\Span_{\Z_p}\{
c,\theta,\theta^2,\ldots,\theta^{(n-2)/2},\zeta_{n/2},\ldots,\zeta_{n-1}\}$.
Then it is easy to check that $I$ is an ideal of $R_f$, $I^2\subseteq \alpha I_f(n-3)$, and
$$N(I)^2 = N(\theta)N(I_f(n-3))=[c/f_0^{(n-2)/2}]^2\Z_p.$$
Let $s=\pm c/f_0^{(n-2)/2}$ be such that $(\alpha,s)$ corresponds to the rational orbit determined by $P$. The triple $(I,\alpha,s)$ gives an integral orbit representing the soluble orbit given by $P$ in $J^1(\Q_p)/2J(\Q_p)$. We note that this association of an integral orbit to a $\Q$-rational point, and the paucity of integral orbits, was the key to the arguments of \cite{B} showing that rational points are rare.

Given one such $f(x,y)=f_0x^n + f_1x^{n-1}y + \cdots + f_ny^n$ with coefficients in $16\Z$, then $2^{4i}\mid f_0^{i-1}f_i$ for $i=1,\ldots,n$. Therefore, Theorem \ref{locsol} follows from the following proposition where the assumption on the coefficients is asymmetrical in contrast to Theorem \ref{locsol}:

\begin{proposition}\label{locsolasy}
Assume that $n\geq 2$ is even. Let $f(x,y)=f_0x^n + f_1x^{n-1}y + \cdots+f_ny^n$ be a binary form of degree $n=2g+2$ satisfying $f_0 \neq 0$ and $2^{4i}\mid f_0^{i-1}f_i$ for $i=1,2,\ldots,n$ such that the hyperelliptic curve $C:z^2=f(x,y)$ has locally soluble $\Div^1$. Then every locally soluble rational orbit for the action of $(\SL_n/\mu_2)(\Q)$ on $\Q^2\otimes\Sym_2\Q^n$ with invariant binary form $f(x,y)$ has an integral representative.
\end{proposition}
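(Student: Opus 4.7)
The approach is local-to-global. By Theorem~\ref{orbit2} (applied with $D=\Z$ and with each $D=\Z_p$), a rational $(\SL_n/\mu_2)(\Q)$-orbit on $\Q^2\otimes\Sym_2\Q^n$ admits a $\Z$-integral representative if and only if it admits a $\Z_p$-integral representative at every prime $p$. Thus it suffices to fix a prime $p$ and, for each $\Q_p$-soluble orbit with invariant form $f\in\Z_p[x,y]$ satisfying the divisibility hypothesis, produce a representative in $\Z_p^2\otimes\Sym_2\Z_p^n$.

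To enumerate the $\Q_p$-soluble orbits concretely, apply Theorem~\ref{thm:sol}: they are in bijection with $J^1_{\frak m}(\Q_p)/2J_{\frak m}(\Q_p)$, with the free action of $J_{\frak m}(\Q_p)/2J_{\frak m}(\Q_p)$. Local solubility of $\Div^1(C)$ implies every class in this quotient is representable by the class $[(P)]$ of a closed $\Q_p$-point $P$ of odd degree on $C$: fixing a $\Q_p$-rational divisor of odd degree on $C$ (from $\Div^1(C)(\Q_p)\neq\varnothing$), any effective odd-degree representative can be reduced modulo the action of $2J_{\frak m}(\Q_p)$ to a divisor of the form $(P)$ for a single closed point $P$ on $C$ over a finite odd-degree extension $K_p/\Q_p$.

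The core of the argument is then the explicit construction of an integral triple $(I,\alpha,s)$ from such a point $P$. For the special shape $P=(0,1,c)\in C(\Z_p)$, the excerpt already produces the $\Z_p$-integral triple $(I,\theta,\pm c/f_0^{(n-2)/2})$, which by the recipe of Theorem~\ref{orbit2} gives a pair $(A,B)\in\Z_p^2\otimes\Sym_2\Z_p^n$ representing the orbit of $[(P)]$. For a general $P=(x_0,y_0,z_0)\in C(\Q_p)$ (normalized so that $y_0=1$), I perform the coordinate change $x\mapsto x+x_0 y$ to obtain the translated form $\w f(x,y)=f(x+x_0 y,y)$ with constant term $z_0^2$, apply the base construction to $\w f$, and transport back via the corresponding $\SL_n(\Q_p)$ change of basis. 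For $p$ odd, a Hensel-type adjustment allows us to take $x_0\in\Z_p$, so that the translation automatically preserves integrality. For $p=2$ one must admit $x_0$ with bounded $2$-adic denominator; the hypothesis $2^{4i}\mid f_i/f_0$ is exactly what is required to ensure that the coefficients of $\w f$, and more delicately the entries of the resulting pair $(A,B)$ built from the base construction, all remain in $\Z_2$. Finally, if $P$ is defined only over an odd-degree extension $K_p/\Q_p$, I first produce a representative integral over the ring of integers of $K_p$ by the preceding step; the obstruction to descent to $\Z_p$ then lies in a cohomology group that is simultaneously $2$-torsion (being controlled by $J_{\frak m}[2]$) and annihilated by the odd integer $[K_p:\Q_p]$ via the standard corestriction argument, and so must vanish.

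The principal obstacle will be the $p=2$ coordinate-change step. Bounding the worst $2$-adic loss incurred in the triple $(I,\alpha,s)$ under a translation $x\mapsto x+x_0 y$ with $v_2(x_0)<0$ requires a careful tracking of the $2$-adic valuations of the integral basis $\zeta_k=f_0\theta^k+f_1\theta^{k-1}+\cdots+f_{k-1}\theta$ of $R_f$ and of the ideal $I_f(n-3)$ through the coordinate change, together with the power $f_0^{(n-2)/2}$ appearing in $s$. The exponent $4i$ in the hypothesis is precisely what is needed to absorb the cumulative effect of these factors uniformly across all $i$, and verifying this is the heart of the proof. All other steps—primes $p$ odd, Hensel lifting of approximate solutions, and corestriction for odd-degree extensions—proceed along standard lines.
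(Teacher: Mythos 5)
Your reduction to a statement over each $\Z_p$ via Theorem~\ref{orbit2} matches the paper, but the core of your construction diverges from the paper's proof in ways that leave genuine gaps. First, you assert that every class in the relevant quotient (which for $\SL_n/\mu_2$ is $J^1(\Q_p)/2J(\Q_p)$ by Theorem~\ref{thm:soluble}, not $J^1_{\frak m}(\Q_p)/2J_{\frak m}(\Q_p)$) can be represented, modulo $2J(\Q_p)$, by the class of a \emph{single} closed point of odd degree on $C$. That is not justified and is not what the paper uses: the paper invokes \cite[Lemma 3.8]{W2} to represent each class by a Galois-invariant divisor $D=(P_1)+\cdots+(P_m)-D^*$ of degree $m\le g+1$ supported on several points $P_i$ defined over a Galois extension of $\Q_p$, and builds the ideal $I_D=\langle R(\theta),P(\theta)I_f((n-3-m)/2)\rangle$ directly from the interpolation data $P(x)=\prod(x-a_i)$ and $R(a_i)=c_i$. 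Because $D$ is Galois-invariant, $P$ and $R$ have coefficients in $\Q_p$ and $I_D$ is a $\Z_p$-module from the outset; no descent is needed. Your alternative --- construct an integral representative over the ring of integers of an odd-degree extension $K_p$ and then descend by corestriction --- does not work: the existence of an integral representative inside a fixed rational orbit is not the vanishing of a class in an abelian cohomology group, so there is no corestriction map to apply. The corestriction argument in the paper is used only to prove \emph{solubility} of a torsor class (in the proof of Theorem~\ref{thm:sol}), never integrality.

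Second, you have misidentified the role of the hypothesis $2^{4i}\mid f_i/f_0$. It is not there to absorb $2$-adic losses under a translation $x\mapsto x+x_0y$ with $v_2(x_0)<0$ (such losses would grow with $|v_2(x_0)|$ and could not be absorbed by any fixed exponent). Rather, the hypothesis implies that for any non-integral $a\in\Q_p$ one has $a-\theta\in L^{\times2}\Q_p^\times$; since by Theorem~\ref{orbit2} the existence of an integral triple depends only on the image of the divisor under the ``$x-T$'' map in $(L^\times/L^{\times2}\Q_p^\times)_{N=f_0}$, this lets one simply \emph{discard} the points of $D$ with non-integral $x$-coordinate without changing the orbit, so that all remaining coordinates may be taken integral and no coordinate change is ever required. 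The same mechanism drives the induction the paper needs when the interpolation polynomial $R(x)$ fails to be integral (a case your plan does not address at all): a Newton-polygon analysis of $f(x,1)-R(x)^2$ replaces $D$ by a divisor with $m-2$ points plus a part $E$ supported at non-integral $x$-coordinates, and the hypothesis guarantees $(x-\theta)(E)\in L^{\times2}\Q_p^\times$ so that $E$ can be dropped. Without these two ingredients --- Galois-invariant multi-point divisors with $\Q_p$-rational interpolation data, and the discarding of non-integral points --- your plan cannot be completed as written.
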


\begin{proof}
We work over $\Z_p$ and give an explicit construction of the ideal $I$, in a manner similar to the one-point case shown above (cf. \cite[\S2]{B}) and the corresponding statements in \cite[Proposition 8.2]{BG1} and \cite[Proposition 2.9]{SW}. There are several important differences due to $f_0$ not being $1$.

Define $g(x,y) = x^n + f_1x^{n-1}y + f_0f_2x^{n-2}y + \cdots + f_0^{n-1}f_ny^n.$ Then $g(f_0x,y) = f_0^{n-1}f(x,y)$ and so $(f_0\theta,1)$ is a root of $g$. The condition $2^{4i}\mid f_0^{i-1}f_i$, which is nontrivial only when $p=2$, implies that if $a\in \Q_p$ is non-integral, then $a - f_0\theta \in L^\times$ lies in $L^{\times2}\Q_p^\times.$

We claim that it suffices to consider classes in $J^1(\Q_p)/2J(\Q_p)$ that can be represented by a Galois-invariant divisor of the form
\begin{equation}\label{eq:Dm}
D  = (P_1) + (P_2) + \cdots + (P_m) - D^*,
\end{equation}
such that:
\begin{enumerate}
\item The points $P_i = (a_i,b_i,c_i)$ are non-Weierstrass and non-infinite;
\item The effective divisor $D^*$ is supported on points above $\infty$;
\item The positive integer $m$ is odd with $m\leq g+1$;
\item For every $i=1,\ldots,m$, scale $a_i,b_i,c_i$ so that $b_i=1$. Then $f_0a_i$ is integral and the $a_i$'s are distinct.
\end{enumerate}

Since $\Div^1(C)(\Q_p)\neq\varnothing$, every $\Q_p$-rational divisor class can be represented by a rational divisor by Proposition \ref{pic}. By \cite[Lemma 3.8]{W2}, every class in $J^1(\Q_p)/2J(\Q_p)$ has the desired form satisfying conditions 1, 2, 3 except for the oddness of $m$. As remarked above, if $f_0a_i$ is not integral, then $f_0a_i - f_0\theta\in L^{\times2}\Q_p^\times$ and so is $a_i - \theta$. Removing all the points $P_i$ with $f_0a_i$ non-integral gives a rational divisor $D'$ that has the same image as $D$ via the $x-T$ map from $J^1(\Q_p)/2J(\Q_p)$ to $(L^\times/(L^{\times2}\Q_p^\times))_{N=f_0}$. By Theorem \ref{orbit2}, a triple $(I,\alpha,s)$ exists for $D$ if and only if it exists for $D'$. We may impose the condition that the $a_i$'s are all distinct since we are working modulo $2J(\Q_p)$.

We now show that we only need to consider the case $m$ odd. If $m$ is even, then it forces $f_0$ to be a square and so the points at infinity are rational. Let $\infty$ denote one of them. Since $D$ has degree $1$, we see that the degree of $D^*$ is odd. Let $x_0$ be an element of $\Z_p$ to be chosen later and consider the change of coordinate $(x,y) \mapsto (x - x_0y,y) \mapsto (-y,x-x_0y).$ Let $\w{f}(x,y)$ denote the new binary form, which is $\SL_2(\Z_p)$ equivalent to $f(x,y)$. Let $\w{C}$ and $\w{J}$ denote the new hyperelliptic curve and its Jacobian. Let $Q_1,\ldots,Q_m,Q_0\in \w{C}$ denote the images of $P_1,\ldots,P_m,\infty$. We pick $x_0$ so that none of the points $Q_i$ are Weierstrass for $\w{C}$. The divisor $D$ becomes the divisor $\w{D} = (Q_0) + \cdots + (Q_m) - \mbox{ points at infinity}$, up to $2\w{J}(\Q_p)$. If integral orbits exist for $\w{D}$, then applying the inverse of the above $\SL_2(\Z_p)$ transformation gives the desired integral orbits for $D$. There are $m+1$ non-Weierstrass and non-infinite points in $\w{D}$ and so we are done if $m\leq g.$

Suppose now $m=g+1$ is even. Let $\w{R}(x)$ be a polynomial of degree at most $g+1$ such that $\w{R}(\w{a_i}) = \w{c_i}$ where $Q_i = (\w{a_i},1,\w{c_i})$ for each $i=0,\ldots,g+1$. Then $\w{f}(x,1) - \w{R}(x)^2$ has degree at most $2g+2$ and vanishes at $\w{a}_0,\ldots,\w{a}_{g+1}$. So it has at most $g$ other roots. This shows that $\w{D}$ is rationally equivalent to a divisor of the form $(R_1) + \cdots + (R_{m'}) - \mbox{ points at infinity}$, with $m'\leq g$. If $m'$ is odd, then we are done. If $m'$ is even, then since $m'$ is now at most $g$, we may apply the above construction to obtain a divisor $D''$ of the form $(S_1) + \cdots + (S_{m'+1}) - \mbox{ points at infinity}$, such that the existence of integral orbits is equivalent for $D''$, $\w{D}$ and $D$.

Suppose now $D$ is a divisor of the form \eqref{eq:Dm} satisfying conditions 1--4. Define $P(x) = (x - f_0a_1)\cdots(x - f_0a_m).$ By our assumption on the integrality of $f_0a_i$, $P(x)$ is an integral polynomial. Write $\alpha_0 = (a_1 - \theta)\cdots(a_m - \theta).$ Then $P(f_0\theta) = - f_0^m\alpha_0.$ Next define $R(x)$ to be a polynomial of degree at most $m-1$ so that $R(f_0a_i) = f_0^{n/2}c_i$ for each $i=1,\ldots,m$. Then $R(x)^2 - f_0g(x,1)$ vanishes at $f_0a_1,\ldots,f_0a_m.$ So there exists an integral polynomial $h(x)$ such that $R(x)^2 - f_0g(x,1) = P(x)h(x)$. Note we have $R(f_0\theta)^2 = P(f_0\theta)h(f_0\theta).$

Suppose first $R(f_0x)$ is an integral polynomial. Then we set $I_D$ to be the following $R_f$-submodule of $L$:
$$I_D = \langle f_0^{2m}R(f_0\theta), P(f_0\theta)I_f(\frac{n-3-m}{2})\rangle.$$
Computing its square gives:
\begin{eqnarray*}
I_D^2 &=& P(f_0\theta)\cdot\langle f_0^{4m}h(f_0\theta), f_0^{2m}R(f_0\theta)I_f(\frac{n-3-m}{2}), P(f_0\theta)I_f(n-3-m)\rangle\\
&=& P(f_0\theta)f_0^m\cdot\langle f_0^{3m}h(f_0\theta), f_0^mR(f_0\theta)I_f(\frac{n-3-m}{2}), (\theta-a_1)\cdots(\theta-a_m)I_f(n-3-m)\rangle\\
&\subset& f_0^{2m}\alpha_0 I_f(n-3).
\end{eqnarray*}
The last containment follows from computing the degrees of $h$ and $R$. (When $m=1$, one checks directly that $h\in I(n-3)$.) We then set $\alpha = f_0^{2m}\alpha_0$ to get $I^2 \subset \alpha I(n-3)$.

To compute the norm of $I_D$, we use a specialization argument. Let $\RR$ denote the ring $$\RR=\Z_p[f_0,\ldots,f_n,a_1,\ldots,a_m][\sqrt{f(a_1,1)},\ldots,\sqrt{f(a_m,1)}],$$
where $f(x,1)=f_0x^n + f_1x^{n-1} + \cdots + f_n.$ Write $c_i=\sqrt{f(a_i,1)}$ for each $i=1,\ldots,m$. Inside $\RR[x]/f(x,1),$ we define $\zeta_1,\ldots,\zeta_{n-1}$ as before and denote the corresponding $R_f,I_f,I_D$ by $\RR_f,\II_f,\II_D.$ One has the notion of $N\II_D$ as an $\RR$-submodule of the fraction field of $\RR$.

We claim that $N\II_D$ is the principal ideal generated by $s = c_1\cdots c_mf_0^{nm-(n-3+m)/2}.$ Specializing to particular $f_0,\ldots,f_n,a_1,\ldots,a_m$ then completes the proof.
We prove this claim by first inverting $f_0$. In this case, the result follows from \cite[Proposition~8.5]{BG}. Next we localize at $(f_0)$ to check that the correct power of $f_0$ is attained. Since every ideal is invertible now, it suffices to show that $\II_D^2 = \alpha\II_f^{n-3}$ which follows from the statements
\begin{equation}\label{eq:thetaminusa}
(\theta - a_i)(\RR_f)_{(f_0)} = (\II_f)_{(f_0)}
\end{equation}
for $i=1,\ldots,m$. To prove \eqref{eq:thetaminusa}, note that the containment $\subset$ is clear since $\theta-a_i\in\II_f$; equality follows because they have the same norm. We now give another more explicit proof of \eqref{eq:thetaminusa}. Note that it remains to show that $1\in(\theta-a_i)(\RR_f)_{(f_0)}.$ Consider the polynomial $h_i(t) = (f(t,1)-c_i^2)/(t-a_i).$ By definition $h_i(\theta)(\theta-a_i) = -c_i^2.$ Moreover, writing out $h_i(t)$ explicitly, one sees that
$$h_i(\theta) = \zeta_{n-1} + a_i\zeta_{n-2} + a_i^2\zeta_{n-3} + \cdots + a_i^{n-2}\zeta_1 + h_i(0)\in \RR_f.$$
This shows that $c_i^2\in (\theta - a_i)(\RR_f)_{(f_0)}$, and hence $1\in(\theta-a_i)(\RR_f)_{(f_0)}$ since $c_i$ is a unit in $(\RR_f)_{(f_0)}.$

We now deal with the case when $R(f_0x)$ is not integral. The rational function $y - R(f_0x)/f_0^{n/2}$ vanishes at $P_1,\ldots,P_m$ which prompts us to consider the divisor $\mbox{div}(y - R(f_0x)/f_0^{n/2})$, which amounts to studying the roots of $j(x) = f(x,1) - R(f_0x)^2/f_0^n$. Now $j(x)$ is a polynomial of degree $n$ with leading coefficient $f_0$ since the degree of $R^2$ is at most $2m-2<n$. Since $R(f_0x)$ is not integral, $j(x)$ has a coefficient of valuation strictly less than $-n\nu_p(f_0)$, where $\nu_p$ denotes the $p$-adic valuation. Then $j(x)$ has at least $n-(2m-2)$ roots with valuation less than $-\frac{n+1}{n}\nu_p(f_0)$ as seen from its Newton polygon. In other words, $j(x)$ has at least $n-(2m-2)$ roots $a_i^*$ such that $f_0a_i^*$ is not integral. These roots will then give a divisor that is divisible by $2$ in $J(\Q_p)$. Since $j(x)$ vanishes at the $x$-coordinates of $P_1,\ldots,P_m$, we see that it has at most $m-2$ other roots $a$ such that $f_0a$ is integral. Hence $\mbox{div}(y - R(f_0x)/f_0^{n/2})-D$ has the form $D' + E$ where $D'$ has the form \eqref{eq:Dm} with $m$ replaced by $m'\leq m-2$ and where $E\in 2J(\Q_p)$. If $m'$ is even, then as we have shown above, there exists a divisor $D''$ of the form \eqref{eq:Dm} with $m'+1<m$ non-Weierstrass non-infinite points. The proof now concludes by induction on $m$. Once $m = 1$, the polynomial $R(f_0x)$ is integral.
\end{proof}

\bigskip

In certain cases, we may show that a soluble rational orbit has a unique integral representative up to the action of $(\SL_n/\mu_2)(\Z_p)$:

\begin{proposition}\label{int=sol}
Let $p$ be any odd prime, and let $f(x,y)\in\Z_p[x,y]$ be a binary form of even degree $n$ such that $p^2\nmid\Delta(f)$ and $f_0\neq0$. Let $C$ denote the hyperelliptic curve $z^2 = f(x,y)$. Suppose that $\Div^1(C)(\Q_p)\neq\varnothing.$ Then the $(\SL_n/\mu_2)(\Z_p)$-orbits on $\Z_p^2\otimes\Sym_2\Z_p^n$ with invariant binary form $f(x,y)$ are in bijection with soluble $(\SL_n/\mu_2)(\Q_p)$-orbits on $\Q_p^2\otimes\Sym_2\Q_p^n$ with invariant binary form $f(x,y)$. Furthermore, if $(A,B)\in \Z_p^2\otimes\Sym_2\Z_p^n$ with invariant binary form $f(x,y)$, then $\Stab_{(\SL_n/\mu_2)(\Z_p)}(A,B)=\Stab_{(\SL_n/\mu_2)(\Q_p)}(A,B)$.
\end{proposition}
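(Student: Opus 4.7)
The plan is to apply the orbit parametrization of Theorem~\ref{orbit2}, simplified via Remark~\ref{rmk:maximal}, and combine it with the integral construction in the proof of Proposition~\ref{locsolasy}. The key structural input is that since $\disc(R_f)=\Delta(f)$ and $p^2\nmid\Delta(f)$, the order $R_f$ is $p$-maximal in $L$. Because $L$ is \'etale over $\Q_p$, it is a product of local fields, so $\mathcal{O}_L$ is a product of complete discrete valuation rings; hence $R_f=\mathcal{O}_L$ and $\Pic(\mathcal{O}_L)=0$, so every fractional $R_f$-ideal is principal. Remark~\ref{rmk:maximal} then applies, reducing both the integral and the rational parametrizations to data in terms of pairs $(\alpha,s)$ with $N(\alpha)=s^2f_0^{n-3}$, differing only in the allowed equivalence relation.

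For surjectivity onto the soluble $\Q_p$-orbits, I would run the explicit construction in the proof of Proposition~\ref{locsolasy} at the prime $p$. The divisibility hypothesis ``$2^{4i}\mid f_i/f_0$'' is vacuous for odd $p$, so given a soluble $\Q_p$-orbit---which by Theorem~\ref{thm:soluble} corresponds to a class in $J^1_{\frak m}(\Q_p)/2J_{\frak m}(\Q_p)$, representable by a Galois-invariant divisor $D$ with $p$-integral coordinates---the triple $(I_D,P(\theta),s)$ constructed from $D$ lies in $\Z_p^2\otimes\Sym_2\Z_p^n$. Conversely, every $\Z_p$-integral pair $(A,B)$ gives a soluble $\Q_p$-orbit: using $p^2\nmid\Delta(f)$ to control the reduction mod $p$, a smooth Hensel lift produces a $\Q_p$-point on the Fano variety $F(A,B)$.

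For injectivity, suppose two integral triples $(I_1,\alpha_1,s_1)$ and $(I_2,\alpha_2,s_2)$ become $(\SL_n/\mu_2)(\Q_p)$-equivalent via a relation of the form $(I_2,\alpha_2,s_2)=(cMI_1,c^2 t\alpha_1,N(c)t^{n/2}s_1)$ with $c\in L^\times$ and $t\in\Q_p^\times$. Because every $R_f$-ideal is principal, $M$ may be absorbed into $c$; the norm identities $N(I_i)=s_i\Z_p$ then constrain the valuation of the scaling so that, after adjusting $c$ by a unit in $\mathcal{O}_L^\times$, one may take $t\in K^{\times(2)}=\Z_p^\times\Q_p^{\times 2}$, exhibiting the equivalence as $\Z_p$-integral. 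This is the main technical obstacle: one must track carefully how the valuations of $c$ and $t$ are forced into the integral equivalence classes by the integrality of both triples, and this is precisely where the $p$-maximality of $R_f$ is used.

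Finally, for the stabilizer statement, Corollary~\ref{cor:orbitSLmu2} identifies $\Stab_{(\SL_n/\mu_2)(\Q_p)}(A,B)$ with the $\Q_p$-points of the finite commutative group scheme $(\Res_{L/\Q_p}\mu_2)_{N=1}/\mu_2$. Since $p$ is odd and $R_f=\mathcal{O}_L$, this group scheme extends to a finite \'etale group scheme $\mathcal{G}=(\Res_{\mathcal{O}_L/\Z_p}\mu_2)_{N=1}/\mu_2$ over $\Z_p$; as $\mathcal{G}$ is finite \'etale, the valuative criterion gives $\mathcal{G}(\Z_p)=\mathcal{G}(\Q_p)$. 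The $\Z_p$-stabilizer is $\mathcal{G}(\Z_p)\cap(\SL_n/\mu_2)(\Z_p)=\mathcal{G}(\Z_p)$, which thereby coincides with the $\Q_p$-stabilizer, completing the proof.
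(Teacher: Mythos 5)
Your opening moves match the paper: $p^2\nmid\Delta(f)$ forces $R_f$ to be maximal, Remark~\ref{rmk:maximal} reduces both parametrizations to $(\alpha,s)$-data, and the construction in Proposition~\ref{locsolasy} (whose $2$-adic divisibility hypothesis is vacuous for odd $p$) shows every soluble $\Q_p$-orbit has an integral representative. But the two hardest points of the bijection are left open. First, your claim that every $\Z_p$-integral pair is soluble because ``a smooth Hensel lift produces a $\Q_p$-point on the Fano variety'' is not justified, and it is genuinely problematic in the boundary case where $p$ divides $\Delta(f)$ exactly once (which the hypothesis permits): the pencil reduces mod $p$ to a non-generic pencil, so there is no smooth Fano scheme over $\Z_p$ to which Hensel's lemma applies. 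Second, you explicitly defer injectivity of the map from integral to rational orbits (``one must track carefully how the valuations of $c$ and $t$ are forced\dots'') without carrying it out. The paper settles both points at once by a counting argument whose key input you are missing: $p^2\nmid\Delta(f)$ makes the projective closure $\CC$ of $C$ over $\Spec(\Z_p)$ regular, hence the N\'eron model $\J$ of $J$ is connected, and flat cohomology (\cite[Proposition 2.11]{SW}) gives $\J(\Z_p)/2\J(\Z_p)\simeq(R_f^\times/R_f^{\times2}\Z_p^\times)_{N\equiv1}$, the set indexing integral orbits; the N\'eron mapping property identifies this with $J(\Q_p)/2J(\Q_p)$, which indexes the soluble orbits by Theorem~\ref{thm:soluble}. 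Equality of these finite cardinalities, combined with the surjectivity you did establish, forces the natural map to be a bijection onto the soluble orbits, with no need to verify solubility of integral orbits or integrality of rational equivalences directly. Without this step (or an honest completion of your valuation-tracking plus a proof of solubility), the proof is incomplete.

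The stabilizer argument also fails as stated: $(\Res_{\mathcal{O}_L/\Z_p}\mu_2)_{N=1}/\mu_2$ is finite \'etale over $\Z_p$ only when $L/\Q_p$ is unramified. When $p$ exactly divides $\Delta(f)$, the algebra $L$ has a tamely ramified quadratic factor, $\mathcal{O}_L$ is not \'etale over $\Z_p$, and the Weil restriction of $\mu_2$ is quasi-finite but \emph{not} finite over $\Z_p$ (its special fibre has strictly smaller rank than its generic fibre), so the valuative criterion does not apply. Moreover, even granting a good integral model, points of the quotient by $\mu_2$ involve an $H^1(\cdot,\mu_2)$-term, which is exactly the content of the exact sequence \eqref{eq:maximalstab}. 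The paper compares that sequence over $\Q_p$ and over $\Z_p$ term by term: the kernels agree because $L^{\times}[2]=R_f^{\times}[2]$, and the cokernels $(L^{\times2}\cap\Q_p^\times)/\Q_p^{\times2}$ and $(R_f^{\times2}\cap\Z_p^\times)/\Z_p^{\times2}$ agree because $p^2\nmid\Delta(f)$ forces any quadratic subextension $K'\subset L$ to be unramified, so that it is generated by a square root of a unit. You should replace the \'etale-descent argument with this comparison.
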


\begin{proof}
As noted earlier, we only need to focus on the pair $(I,\alpha)$. The condition $p^2\nmid \Delta(f)$ implies that the order $R_f$ is maximal and that the projective closure $\CC$ of $C$ over ${\rm{Spec}}(\Z_p)$ is regular. By Theorem~\ref{thm:soluble}, the assumption that $\Div^1(C)(\Q_p)\neq\varnothing$ implies that soluble $\Q_p$-orbits with invariant binary form $f(x,y)$ exist. Since $p$ is odd, the $p$-adic version of Proposition~\ref{locsolasy} implies that $\Z_p$-orbits with invariant binary form $f(x,y)$ exist. Therefore, by Remark \ref{rmk:maximal}, the set of equivalence classes of pairs $(I,\alpha)$ is nonempty and is in bijection with $(R_f^\times/(R_f^{\times2}\Z_p^\times))_{N\equiv 1}.$  Since the special fiber of $\mathcal{C}$ is geometrically reduced and irreducible, the Neron model $\J$ of its Jacobian $J_{\Q_p}$ is fiberwise connected (\cite[\S9.5 Theorem 1]{BLR}) and its
2-torsion $\J[2]$ is isomorphic to
$(\text{Res}_{R/\Z_p}\mu_2)_{N=1}/\mu_2.$ We have via \'{e}tale cohomology (\cite[Proposition 2.11]{SW}) that
$$\J(\Z_p)/2\J(\Z_p)\simeq (R_f^\times/(R_f^{\times2}\Z_p^\times))_{N\equiv 1}.$$
The N\'eron mapping property implies that
$\J(\Z_p)/2\J(\Z_p)=J(\Q_p)/2J(\Q_p).$

For the stabilizer statement, we have $L^{\times}[2] = R_f^{\times}[2]$ which suffices when $n\equiv 2\pmod{4}$. When $n\equiv 0\pmod{4}$, the exact sequence \eqref{eq:maximalstab} implies that it remains to compare
$(L^{\times2}\cap \Q_p^\times)/\Q_p^{\times2}$ and $(R_f^{\times2}\cap
\Z_p^\times)/\Z_p^{\times2}$. These two groups are nontrivial only when $L$ contains a quadratic extension $K'$ of $\Q_p$. Since $p^2\nmid\Delta(f)$ and $n\geq 4$, the discriminant of the extension $K'/\Q_p$ cannot be divisible by $p$. Hence $K'=\Q_p(\sqrt{u})$ can only be the unramified quadratic extension of $\Q_p$. In other words, $u\in\Z_p^\times.$ Hence in this case $(L^{\times2}\cap \Q_p^\times)/\Q_p^{\times2}$ and $(R_f^{\times2}\cap \Z_p^\times)/\Z_p^{\times2}$ both are equal to the group of order 2 generated by the class of $u$.
\end{proof}

\section{The number of irreducible integral orbits of bounded height}\label{counting}

Let $V=\Sym_2(W^*)\oplus\Sym_2(W^*)$ be the scheme of pairs of symmetric bilinear forms on $W$.
Define the height $H(v)$ of an element $v\in V(\Z)$ to be the height of its invariant binary form. We say that $v\in V(\Z)$ is \emph{irreducible} if its invariant binary form has nonzero discriminant.
In \cite[\S4]{B}, the asymptotic number of irreducible
$\SL_n^{\pm1}(\Z)$-orbits on $V(\Z)$ having height less than $X$ was
determined, and also the asymptotic number of such orbits whose invariant binary
forms satisfy any finite set of congruences.  
The same computation applies also with $G=\SL_n/\mu_2$ in place of $\SL_n^{\pm1}$. We assume henceforth that $n$ is even.

To state this counting result precisely, recall from the discussion of Section \ref{sec:RandCorbits} that we may naturally partition the
set of elements in $V(\R)$ with $\Delta\neq 0$ and whose invariant
binary form is not negative definite into $\sum_{m=0}^{n/2}
r(m)$ components, which we denote by $V^{(m,r)}$
for $m=0,1,\ldots,n/2$ and $r=1,\ldots,r(m)$ where: $r(m)=2^{2m-2}$ if
$m\geq1$; \;$r(0)=2$ if $n\equiv0$ (mod $4$); \,and $r(0)=1$ if $n\equiv2$ (mod $4$). A very similar partition is used in \cite[\S 4.1.1]{B}.

For a given value of~$m$, the component $V^{(m,r)}$ of $V(\R)$ maps to
the component $I(m)$ of nonnegative definite binary $n$-ic forms in $\R^{n+1}$ having
nonzero discriminant and $2m$ real linear factors.
Let $\FF^{(m,r)}$ denote a fundamental domain for the action of $G(\Z)$ on $V^{(m,r)}$,
and set
\[c_{m,r} = {\Vol(\FF^{(m,r)}\cap \{v\in V(\R):H(v)<1\})};\]
here $\Vol$ denotes the Euclidean measure on $V(\R)$. The number of $r$'s that correspond to orbits soluble at $\R$ is $\#(J^{1}(\R)/2J(\R))$ where $J$ denotes the Jacobian of a hyperelliptic curve $z^2=f(x,y)$ with $f(x,y)\in I(m)$. The size of this quotient does not depend on the choice of $f(x,y)\in I(m).$
Then from \cite[Theorems~9 and 17]{B}, we obtain the following counting result:

\begin{theorem}\label{thmcount}
 Fix $m,r$. Suppose $S$ is a $G(\Z)$-invariant subset of $V(\Z)^{(m,r)}:=V(\Z)\cap V^{(m,r)}$
 defined by finitely many congruence conditions modulo prime powers. Let $N(S;X)$
  denote the number of $G(\Z)$-equivalence classes of
  elements $v\in S$ satisfying $H(v)<X$. Then
$$N(S;X)=c_{m,r}\cdot
  \prod_{p} \nu_p(S)\cdot X^{n+1}+o(X^{n+1}),$$
where $\nu_p(S)$ denotes the $p$-adic density of $S$ in $V(\Z)$.
\end{theorem}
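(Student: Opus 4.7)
The plan is to deduce the theorem directly from the counting machinery developed in \cite{B}, with minor modifications to account for (a) the group $G=\SL_n/\mu_2$ in place of $\SL_n^{\pm1}$ and (b) the refined partition of $V(\R)$ into components $V^{(m,\tau)}$. The broad strategy has four steps: construct a fundamental domain, reduce lattice point counting to a volume computation via the averaging method, cut off the cuspidal contribution, and then sieve to impose the congruence conditions.

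First, for each pair $(m,\tau)$ I would fix a fundamental domain $F^{(m,\tau)}$ for the action of $G(\Z)$ on $V^{(m,\tau)}$, constructed from a Siegel set using an Iwasawa decomposition $G(\R)=NAK$, exactly as in \cite[\S4]{B}. Since $\mu_2\subset\SL_n$ acts trivially on $V$, the $G(\Z)$-orbits on $V(\Z)$ differ from the $(\SL_n(\Z)/\mu_2(\Z))$-orbits only via $H^1(\Z,\mu_2)$, an elementary $2$-group whose action has already been absorbed into the definition of the components $V^{(m,\tau)}$ and of the constants $\tau(m)$. Thus counting $G(\Z)$-equivalence classes in $S\cap V^{(m,\tau)}$ of height $<X$ reduces to counting lattice points in $F^{(m,\tau)}\cap\{H(v)<X\}\cap S$.

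Second, I would apply Bhargava's averaging trick: instead of counting lattice points in $F^{(m,\tau)}$, average the count over a compact set $G_0\subset G(\R)$ of translates $gF^{(m,\tau)}$. Because $F^{(m,\tau)}$ has cuspidal regions where the usual Davenport-type bound fails, the main content is to show that the number of \emph{irreducible} lattice points in the cusp contributing to $N(S;X)$ is $o(X^{n+1})$. Here ``irreducible'' means that the first coefficient $f_0$ of the invariant form is nonzero and a few analogous nondegeneracy conditions hold; this is precisely the analysis of \cite[\S4.2]{B}, which carries over verbatim since it depends only on the geometry of the Siegel domain and the polynomial invariants, both of which are unchanged. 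The main body of the fundamental domain then contributes $c_{m,\tau}X^{n+1}+o(X^{n+1})$ by the standard lattice point count, yielding the result when $S=V(\Z)^{(m,\tau)}$.

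Third, to incorporate the finite set of congruence conditions, I would apply \cite[Theorem 17]{B}, which gives a uniform count with a power-saving error when $S$ is defined by congruences modulo $N$. The uniformity allows the product $\prod_p \mu_p(S)$ to be assembled from the local densities by passing to the limit, yielding the stated asymptotic.

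The main obstacle, and really the only place requiring nontrivial verification, is the cusp cut-off: one must check that the bounds on reducible lattice points proved in \cite[\S4.2]{B} for $\SL_n^{\pm1}(\Z)$ continue to hold when the slightly different fundamental domain for $G(\Z)=(\SL_n/\mu_2)(\Z)$ is used, and that the components $V^{(m,\tau)}$ each make a well-defined contribution $c_{m,\tau}$. Because the Iwasawa coordinates and the explicit invariants (the coefficients $f_i$) are intrinsic to $V$ and independent of which isogenous form of the group is acting, this verification is routine, and the volume constants $c_{m,\tau}$ are precisely those one obtains by restricting the computation of \cite[Theorem 9]{B} to the components $V^{(m,\tau)}$.
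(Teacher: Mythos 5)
Your proposal follows essentially the same route as the paper: the paper's entire proof of this theorem is to invoke \cite[Theorems 9 and 17]{B} and observe that the computation there (fundamental domain via Siegel sets, averaging over translates, cusp cut-off for irreducible points, and the congruence refinement) carries over verbatim when $\SL_n^{\pm1}$ is replaced by $G=\SL_n/\mu_2$ and the count is restricted to a single component $V^{(m,\tau)}$, which is exactly what you outline in more detail. The only quibble is your gloss on ``irreducible'' (in \cite{B} it refers to $(A,B)$ with $\Delta\neq 0$ and no common rational isotropic subspace of dimension $g+1$, not merely $f_0\neq 0$), but this does not affect the argument.
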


\section{Sieving to locally soluble orbits}\label{sec:sieve}

Since local solubility is defined by infinitely many congruence conditions, we need a weighted version of
Theorem \ref{thmcount} in which we allow weights to be defined by certain
infinite sets of congruence conditions. The technique for proving such
a result involves using Theorem \ref{thmcount} to impose more and more
congruence conditions.

To describe which weight functions on $V(\Z)$ are allowed, we need the
following definition:
\begin{defn}
  \emph{Suppose $U=\A^M$ is some affine space. A function $\phi:U(\Z)\to[0,1]$ is said to be} defined by congruence
  conditions \emph{if there exist local functions $\phi_p:U(\Z_p)\to[0,1]$ satisfying the following conditions:
  \begin{enumerate}
  \item For all $v\in U(\Z)$, the product $\prod_p\phi_p(v)$ converges to $\phi(v)$.
  \item For each prime $p$, the function $\phi_p$ is locally constant
    outside some ($p$-adically) closed subset of $U(\Z_p)$ of measure $0$.
  \item The $p$-adic integral $\displaystyle\int_{U(\Z_p)}\phi_p(v)dv$ is nonzero.
  \end{enumerate}
  A subset $U'$ of $U(\Z)$ is said to be} defined by congruence conditions \emph{if its characteristic function is defined by congruence conditions.}

\end{defn}

\noindent
Then we have the following theorem, which follows from Theorem \ref{thmcount} via a
sifting argument just as in \cite[\S2.7]{BS}.
\begin{theorem}\label{thinfcong}
  Let $\phi:V(\Z)\to[0,1]$ be a $G(\Z)$-invariant function that is defined
  by congruence conditions via local functions
  $\phi_p:V(\Z_p)\to[0,1]$. Fix $m,r$. Let $S$ be a $G(\Z)$-invariant subset of $V(\Z)^{(m,r)}$
 defined by congruence conditions. Let
  $N_\phi(S;X)$ denote the number of
  $G(\Z)$-equivalence classes of irreducible elements $v\in S$ having height bounded by $X$, where
  each equivalence class $G(\Z) v$ is counted with weight
  $\phi(v)$. 
Then
$$
N_{\phi}(S;X)\leq c_{m,r}X^{n+1}\prod_p\int_{v\in V(\Z_p)}\phi_p(v)dv+o(X^{n+1}).
$$
\end{theorem}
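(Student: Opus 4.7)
The plan is the standard truncate--sieve argument for orbit counts of this type: apply Theorem~\ref{thmcount} at small primes and use the uniform tail estimate Proposition~\ref{propunif} to control the contribution from large primes. First, I would absorb the characteristic function of $S$ into the weight by setting $\psi_p := \phi_p \cdot \mathbf{1}_{S_p}$, where $S_p \subseteq V(\Z_p)$ is the $p$-adic closure of $S$; the resulting $\psi = \prod_p \psi_p$ is again defined by congruence conditions and is again acceptable, because $S$ is $G(\Z)$-invariant and for all sufficiently large $p$ the local condition $\mathbf{1}_{S_p}$ cuts out nothing extra on the locus where $p^2 \nmid \Delta$. It thus suffices to establish an asymptotic for $N_\psi := N_\psi(V(\Z)^{(m,\tau)}; X)$ with main term $c_{m,\tau}\prod_p \int_{V(\Z_p)} \psi_p\,dv \cdot X^{n+1}$.

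For each threshold $Y > 0$, define the truncated weight $\psi^{(Y)}$ by $\psi^{(Y)}_p = \psi_p$ for $p \leq Y$ and $\psi^{(Y)}_p \equiv 1$ for $p > Y$. Each $\psi_p$ is locally constant outside a $p$-adic closed set of measure zero, so $\psi^{(Y)}$ can be approximated uniformly (off a negligible set) by a finite $\Q$-linear combination of indicator functions of congruence classes modulo some $M_Y = \prod_{p \leq Y} p^{k_p}$. Applying Theorem~\ref{thmcount} to each such class with the appropriate coefficient and summing yields
\[
N_{\psi^{(Y)}} = c_{m,\tau} \cdot \prod_{p \leq Y} \int_{V(\Z_p)} \psi_p(v)\,dv \cdot X^{n+1} + o_Y(X^{n+1}),
\]
where the error term may depend on $Y$ but is $o(X^{n+1})$ once $Y$ is fixed.

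Next, since $0 \leq \psi \leq \psi^{(Y)} \leq 1$, the difference $N_{\psi^{(Y)}} - N_\psi$ is bounded above by the number of $v \in V(\Z)$ of height less than $X$ for which $\psi_p(v) < 1$ for some $p > Y$. Acceptability (after absorbing the finite exceptional set of primes into the truncation) forces any such $v$ to lie in $W_p$ for some $p > Y$, so Proposition~\ref{propunif} yields $|N_{\psi^{(Y)}} - N_\psi| = O(X^{n+1}/Y^\delta)$ uniformly in $X$. Similarly, $\int_{V(\Z_p)} \psi_p\,dv$ differs from $1$ only on the set $W_p$ of $p$-adic measure $O(1/p^2)$, so the infinite product $\prod_p \int \psi_p\,dv$ converges absolutely, and its $Y$-truncation converges to it as $Y \to \infty$. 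Sending first $X \to \infty$ and then $Y \to \infty$ gives the desired asymptotic with error $o(X^{n+1})$.

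The only genuinely nontrivial ingredient is Proposition~\ref{propunif}: it is precisely what lets us exchange the limits $Y \to \infty$ and $X \to \infty$. The remainder of the argument is routine bookkeeping---approximating the locally constant weights at small primes by step functions and summing Theorem~\ref{thmcount} over the resulting residue classes.
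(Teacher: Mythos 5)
Your argument is correct and is precisely the standard truncation-and-tail argument that the paper invokes by reference, stating only that Theorem~\ref{thinfcong} follows from Theorem~\ref{thmcount} and Proposition~\ref{propunif} exactly as \cite[Theorem 2.20]{BS} follows from \cite[Theorems 2.12 and 2.13]{BS}. You have simply written out the details of that same approach: approximate the weight at primes $p\le Y$ by finite congruence conditions, control the primes $p>Y$ via acceptability and the uniform estimate on $N(W_p;X)$, and let $Y\to\infty$ after $X\to\infty$.
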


Identify the scheme of all binary $n$-ic forms over $\Z$ with $\A_\Z^{n+1}$ and let $F_0$ denote the set of all integral binary forms of degree $n$. If $F$ is a subset of $F_0$, denote by $F(\F_p)$ the reduction modulo $p$ of the $p$-adic closure of $F$ in $\A_\Z^{n+1}(\Z_p)$.

\begin{defn}\label{def:large} {\em A subset $F$ of $F_0$ is {\it large} if the following conditions are satisfied:
\begin{enumerate}
\item It is defined by congruence conditions. 
\item There exists a subscheme $S_0$ of $\A_\Z^{n+1}$ of codimension at least 2 such that for all but finitely many~$p$, we have $F_0(\F_p)\backslash F(\F_p)\subset S_0(\F_p).$
\end{enumerate}
We identify hyperelliptic curves with their associated binary forms. We say that a family of hyperelliptic curves $z^2 =f(x,y)$ is {\it large} if the set of binary forms $f(x,y)$ appearing is large.}
\end{defn}
As an example, the subset $F_1$ of $F_0$ consisting of binary $n$-ic forms $f(x,y)$ such that the corresponding hyperelliptic curves $C$ given by $z^2 = f(x,y)$ have locally soluble $\Div^1$ is large. The set $F_2\subset F_1$ of integral binary $n$-ic forms such that the corresponding hyperelliptic curves are locally soluble is also large. These two statements follow from \cite[Lemma 15]{PS}. Our aim is to prove the analogue of Theorem~\ref{count} for all large families of hyperelliptic curves whose associated binary forms are contained in $F_1$.

Let $F$ be a large subset of $F_0$ contained in $F_1$. Since the curves $z^2=f(x,y)$ and $z^2 = \kappa^2f(x,y)$ are isomorphic over $\Q$, where $\kappa$ is the constant in Theorem~\ref{2SelparZ}, we assume without loss of generality that the coefficients of every $f(x,y)$ in $F$ lie in $\kappa^2\Z$. To prove Theorem \ref{count}, we
need to weigh each locally soluble element $v\in V(\Z)$ whose invariant binary form is in $F$ by the reciprocal of the number of
$G(\Z)$-orbits in the $G(\Q)$-equivalence class of $v$ in
$V(\Z)$. However, in order for our weight function to be defined by
congruence conditions, we instead define the following weight function
$w:V(\Z)\to [0,1]$:
\begin{equation}\label{eqdefm}
  w(v):=
  \begin{cases}
    \Bigl(\displaystyle\sum_{v'}\frac{\#\Stab_{G(\Q)}(v')}{\#\Stab_{G(\Z)}(v')}\Bigr)^{-1} \qquad&\text{if $v$ is locally soluble with invariant binary form in $F$,}\\[.1in]
    \qquad\qquad 0 \qquad&\text{otherwise},
  \end{cases}
\end{equation}
where the sum is over a complete set
of representatives for the action of $G(\Z)$ on the
$G(\Q)$-equivalence class of $v$ in $V(\Z)$.
We then have the following theorem:
\begin{theorem}\label{thmisimp}
Let $F$ be a large subset of $F_0$ contained in $F_1$. Moreover,
suppose that the coefficients of every $f(x,y)\in F$ lie in $16^n\Z$. Then
\begin{equation}\label{eqtheoremimpone}
\sum_{\substack{C\in F\\H(C)< X}}\#\Sel_2(J^1)\leq\sum_{m=0}^{n/2}\sum_{\,r \,{\rm soluble}}N_w(V(\Z)_F^{(m,r)};X)+o(X^{n+1}),
\end{equation}
where $V(\Z)_F^{(m,r)}$ is the set of all elements in $V(\Z)^{(m,r)}$ whose invariant binary forms lie in $F$, and ``$r$~soluble'' is short for ``every element of $V(\Z)_F^{(m,r)}$ is soluble over~$\R$''.
\end{theorem}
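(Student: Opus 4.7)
The plan is to convert the Selmer-set sum on the left into a weighted sum of $G(\Z)$-orbits, using Theorems \ref{thm:lsoluble} and \ref{locsol} to pass from Selmer elements to integral $G(\Q)$-orbits, Proposition \ref{int=sol} to control stabilizers at ``generic'' primes, and finally Proposition \ref{propunif} to absorb all non-generic contributions into $o(X^{n+1})$.

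First, for each $C \in F$ with invariant form $f=f_C$, the inclusion $F \subset F_1$ guarantees that $\Div^1(C)$ is locally soluble at every place. Theorem \ref{thm:lsoluble} then identifies $\#\Sel_2(J^1)$ with the number of locally soluble $G(\Q)$-orbits on $\Q^2 \otimes \Sym_2 \Q^n$ of invariant $f$: when the set is nonempty, $\Sel_2(J)$ acts simply transitively, so both sides equal $\#\Sel_2(J) = \#\Sel_2(J^1)$, and when empty both vanish. Because $\text{Inv}(F) \subset 16^n\Z$, Theorem \ref{locsol} then supplies an integral representative in each such $G(\Q)$-orbit. Partitioning $V(\Z)$ by real components and noting that a locally soluble orbit must lie in a real-soluble component gives
$$\sum_{\substack{C \in F\\H(C)<X}} \#\Sel_2(J^1) \;=\; \sum_{m=0}^{n/2}\sum_{\tau\text{ soluble}} \#\bigl\{G(\Q)\text{-orbits of locally soluble } v \in V(\Z)_F^{(m,\tau)} \text{ with } H(v)<X\bigr\}.$$

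Second, I would rewrite each such rational-orbit count as the weighted $G(\Z)$-orbit count $N_w$. A $G(\Q)$-equivalence class $[v_0] \subset V(\Z)$ with $G(\Z)$-representatives $v_1, \dots, v_k$ contributes
$$\sum_{i=1}^k w(v_i) \;=\; \frac{k}{\#\Stab_{G(\Q)}(v_0) \cdot \sum_{i=1}^k 1/\#\Stab_{G(\Z)}(v_i)}$$
to $N_w$, which equals exactly $1$ whenever $\Stab_{G(\Z)}(v_i) = \Stab_{G(\Q)}(v_0)$ for every $i$. By Proposition \ref{int=sol}, this stabilizer equality holds at every $v_0$ whose invariant discriminant is squarefree at every odd prime (the prime $2$ is absorbed by the $16^n\Z$ divisibility hypothesis on the coefficients). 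Therefore, restricted to the ``generic'' locus $V(\Z) \setminus \bigcup_p W_p$, the weighted count $N_w$ agrees exactly with the count of $G(\Q)$-orbits.

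Third, I would show the remaining discrepancy is $o(X^{n+1})$. The rational stabilizer injects into $(\Res_{L/\Q}\mu_2)_{N=1}/\mu_2$, so its order is uniformly bounded by $2^{n-2}$; consequently, the discrepancy between $\sum_i w(v_i)$ and $1$ over a single ``bad'' $G(\Q)$-orbit is $O_n(1)$, and the total discrepancy is $O_n\bigl(\sum_p N(W_p;X)\bigr)$. Splitting this sum at a threshold $M = M(X) \to \infty$, Proposition \ref{propunif} controls the tail $\sum_{p>M} N(W_p;X) = O(X^{n+1}/M^\delta)$, while for each fixed $p \le M$ the set $W_p$ is cut out by a congruence of codimension $2$ in the space of invariant forms and hence contributes $o(X^{n+1})$ by Theorem \ref{thmcount}. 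Choosing $M(X)$ growing sufficiently slowly then yields a total error $o(X^{n+1})$, establishing \eqref{eqtheoremimpone}. The main obstacle is this last step: because the ``bad'' locus is defined by infinitely many congruence conditions, one cannot simply apply Theorem \ref{thmcount} directly, and the uniform large-$p$ estimate of Proposition \ref{propunif} is essential to truncate the sieve.
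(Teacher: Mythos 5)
Your first step agrees with the paper: by Theorems \ref{thm:lsoluble} and \ref{locsol} the left side equals the number of locally soluble $G(\Q)$-classes of elements of $V(\Z)$ with invariants in ${\rm Inv}(F)$ and height below $X$, and the algebraic identity you write for $\sum_i w(v_i)$ is correct. The gap is in how you argue that this weight sum is $1$ outside a negligible set. You reduce to the stabilizer equality $\Stab_{G(\Z)}(v_i)=\Stab_{G(\Q)}(v)$ and try to enforce it via Proposition \ref{int=sol} on the locus where $\Delta$ is squarefree at odd primes, declaring the prime $2$ ``absorbed'' by the $16^n\Z$ hypothesis. That last claim is unsupported: Proposition \ref{int=sol} applies only to odd $p$ with $p^2\nmid\Delta(f)$, and with coefficients in $16^n\Z$ the discriminant is always highly divisible by $2$, so your ``generic locus'' criterion gives nothing at $p=2$. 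More seriously, your error estimate fails: you bound the discrepancy by $O\bigl(\sum_p N(W_p;X)\bigr)$ and assert that each fixed $W_p$ with $p\le M$ contributes $o(X^{n+1})$ because it is ``cut out by a congruence of codimension $2$.'' But $W_p$ is the positive-density congruence set $\{p^2\mid\Delta\}$, so Theorem \ref{thmcount} gives $N(W_p;X)\sim c\,\mu_p(W_p)X^{n+1}$ with $\mu_p(W_p)>0$; summing over $p\le M$ yields $\Theta(X^{n+1})$, not $o(X^{n+1})$. Your truncated sieve therefore only produces an error of the order of the main term.

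The paper's proof sidesteps both problems with a much softer observation: since each $\Stab_{G(\Z)}(v_i)$ is a subgroup of $\Stab_{G(\Q)}(v)$, the identity $\sum_i w(v_i)=1$ holds \emph{exactly} whenever $\Stab_{G(\Q)}(v)$ is trivial --- no local stabilizer comparison at any prime is needed. By \cite[Proposition 14]{B}, elements with nontrivial rational stabilizer number only $o(X^{n+1})$, and since $\#\Stab_{G(\Q)}(v)\le 2^{2g}$ the weight sum is always within an absolutely bounded factor of $1$, so this exceptional set contributes $o(X^{n+1})$. The machinery you invoke (Proposition \ref{propunif}, acceptability of $w$ via Proposition \ref{int=sol}) is indeed needed in this paper, but in the \emph{next} step --- evaluating $N_w$ via Theorem \ref{thinfcong} --- not in establishing \eqref{eqtheoremimpone}. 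To repair your argument you would need to replace the bound by $\sum_p N(W_p;X)$ with a bound by the count of elements having nontrivial rational stabilizer, i.e.\ essentially to import the paper's appeal to \cite[Proposition 14]{B}.
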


\begin{proof} By Theorems \ref{thm:lsoluble} and \ref{locsol}, the left hand side is equal to the number of $G(\Q)$-equivalence classes of elements in $V(\Z)$ that are locally soluble, have invariant binary forms in $F$, and have height bounded by $X$. Given a locally soluble element $v\in V(\Z)$ with invariant binary form in $F$,
let $v_1,\ldots, v_k$ denote a complete set of representatives for the action of $G(\Z)$
on the $G(\Q)$-equivalence class of $v$ in $V(\Z)$. Then
\begin{equation}\label{eq:weights}
\sum_{i=1}^k \frac{w(v_i)}{\#\Stab_{G(\Z)}(v_i)}=\Bigl(\sum_{i=1}^k \frac{\#\Stab_{G(\Q)}(v)}{\#\Stab_{G(\Z)}(v_i)}\Bigr)^{-1}\sum_{i=1}^k \frac{1}{\#\Stab_{G(\Z)}(v_i)}=\frac{1}{\#\Stab_{G(\Q)}(v)}.
\end{equation}
When $\Stab_{G(\Q)}(v)$ is trivial, which happens for all but negligibly
many $v\in V(\Z)$ by \cite[Proposition~14]{B}, \eqref{eq:weights} simplifies to
\begin{equation}\label{eq:weightssimplify}
\sum_{i=1}^k w(v_i)=1.
\end{equation}
Since the size of $\Stab_{G(\Q)}(v)$ is bounded above by $2^{2g}$, \eqref{eq:weightssimplify} always holds up to an absolutely bounded factor. Therefore, the right hand side of
\eqref{eqtheoremimpone} also counts the number of $G(\Q)$-equivalence
classes of elements in $V(\Z)$ that are locally soluble, have
invariant binary forms in $F$, and have height bounded by $X$.
\end{proof}

\medskip

In order to apply Theorem \ref{thinfcong} to bound $N_w(V(\Z)^{(m,r)};X),$ we need to know that $w$ is defined by congruence conditions.
The proof that $w$ is indeed a product $\prod_p w_p$ of local weight
functions is identical to the proof of \cite[Proposition~3.6]{BS}. Therefore, to bound $N_w(V(\Z)^{(m,r)};X),$ it remains to compute $c_{m,r}$ and the $p$-adic integrals $\int w_p(v)dv$. We fix left-invariant top differentials $d\tau,d\mu$ on $G$ and $\A_\Z^{n+1}$ defined over $\Z$ and denote by $\tau_\infty,\tau_p,\mu_\infty,\mu_p$ the induced measures on $G(\R),G(\Q_p),\R^{n+1},\Q_p^{n+1}$ respectively. We normalize $d\mu$ such that $\mu_\infty$ is the usual Euclidean measure on $\R^{n+1}$ and $\mu_p(\Z_p^{n+1})=1$ for all primes~$p$. Then, we have the following results:
\begin{eqnarray*}
c_{m,r}X^{n+1}&=&\frac{|\J|\tau_\infty(G(\Z)\backslash G(\R))}{\#J[2](\R)}\mu_\infty(\{f\in I(m)| H(f) < X\});\\
\int_{v\in V(\Z_p)}w_p(v)dv&=&|\J|_p\tau_p(G(\Z_p))\mu_p(F_p)\frac{\#(J^1(\Q_p)/2J(\Q_p))}{\#J[2](\Q_p)};
\end{eqnarray*}
here $\J$ is a nonzero rational constant; $J$ denotes the Jacobian of any
hyperelliptic curve defined by $z^2 = f(x,y)$ where $f(x,y)\in F\cap I(m)$; and $F_p$ is the $p$-adic closure of $F.$
The first equation is proved in \cite[\S 4.4]{B}. The second equation follows from the identical computation as in \cite[\S 4.5]{SW}.

For every place $\nu$ of $\Q$, we let $a_\nu$ denote the following quotient:
$$a_\nu = \frac{\#(J^1(\Q_\nu)/2J(\Q_\nu))}{\#J[2](\Q_\nu)}.$$
Because of the assumption  that $J^1(\Q_\nu)\neq\varnothing$, this quotient depends only on $\nu$, $g$. Indeed, it is equal to $2^{-g}$ for $\nu=\infty$, \,$2^g$ for $\nu=2$, and $1$ for all other primes (see, e.g., \cite[Lemma~12.3]{BG}). The $a_\nu$'s satisfy the product formula $\prod_\nu a_\nu = 1.$

We now combine Theorem \ref{thinfcong}, Theorem \ref{thmisimp}, and the product formula $\prod_\nu |\J|_\nu=1$ to obtain:

\begin{theorem}\label{thm:main} Let $F$ be a large subset of $F_0$ contained in $F_1$. Moreover, suppose that the coefficients of every $f(x,y)$ in $F$ lie in $16^n\Z$. Then
\begin{equation}\label{eq:locsolinV}
\sum_{\substack{C\in F\\H(C)< X}}\#\Sel_2(J^1) \leq \sum_{m=0}^{n/2}\tau(G)\mu_\infty(\{f\in I(m)| H(f) < X\})\prod_p \mu_p(F_p) + o(X^{n+1}),
\end{equation}
where $\tau(G)=2$ denotes the Tamagawa number of $G$.
\end{theorem}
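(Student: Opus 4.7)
The plan is to chain together the previously stated results: first use Theorem~\ref{thmisimp} to convert the Selmer sum into a weighted orbit count on $V(\Z)$, then apply the uniform equidistribution result Theorem~\ref{thinfcong} to evaluate that count via local integrals, and finally clean up using the product formulas for $|\J|_\nu$ and $a_\nu$ to expose the Tamagawa number.

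First, by Theorem~\ref{thmisimp}, the left-hand side equals
\[
\sum_{m=0}^{n/2}\sum_{\tau\,\text{soluble}}N_w\bigl(V(\Z)_F^{(m,\tau)};X\bigr)+o(X^{n+1}).
\]
To apply Theorem~\ref{thinfcong} to each $N_w$, I need to verify that the weight $w$ of \eqref{eqdefm} is \emph{acceptable} and defined by congruence conditions. That $w=\prod_p w_p$ factors into local weights follows by exactly the argument of \cite[Proposition~3.6]{BS} applied to the equivalence of $G(\Q)$ and $G(\Z)$ orbits up to local obstructions. Acceptability is the key input of Proposition~\ref{int=sol}: for any odd prime $p$ with $p^2\nmid\Delta(\disc(Ax-By))$, soluble $(\SL_n/\mu_2)(\Q_p)$-orbits are in bijection with $(\SL_n/\mu_2)(\Z_p)$-orbits (and stabilizers agree), so $w_p(v)=1$ there. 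Granted these, Theorem~\ref{thinfcong} gives
\[
N_w(V(\Z)_F^{(m,\tau)};X)=c_{m,\tau}\prod_p\int_{V(\Z_p)}w_p(v)\,dv+o(X^{n+1}).
\]

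Next, I would substitute the two evaluations recorded just before the theorem:
\[
c_{m,\tau}=\frac{|\J|\,\Vol(G(\Z)\backslash G(\R))}{\#J^{(m)}[2](\R)}\,\Vol\bigl(\{f\in I(m):H(f)<X\}\bigr),
\]
(understood as the relevant cutoff at height $X$) and
\[
\int_{V(\Z_p)}w_p(v)\,dv=|\J|_p\,\Vol(G(\Z_p))\,\Vol({\rm Inv}_p(F))\,a_p.
\]
For each fixed $m$, summing over the soluble $\tau$'s multiplies $c_{m,\tau}$ by $\#(J^{1,(m)}(\R)/2J^{(m)}(\R))=a_\infty\cdot\#J^{(m)}[2](\R)$ (using $a_\infty=\#(J^1(\R)/2J(\R))/\#J[2](\R)$), so the factor $\#J^{(m)}[2](\R)$ in the denominator of $c_{m,\tau}$ cancels and only $a_\infty$ remains from the archimedean stabilizer data.

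Gathering the global and local pieces, the product that appears for each $m$ is
\[
\Bigl(|\J|\prod_p|\J|_p\Bigr)\Bigl(a_\infty\prod_p a_p\Bigr)\,\Vol(G(\Z)\backslash G(\R))\prod_p\Vol(G(\Z_p))\cdot\Vol\bigl(\{f\in I(m):H(f)<X\}\bigr)\prod_p\Vol({\rm Inv}_p(F)).
\]
The product formula $\prod_\nu|\J|_\nu=1$ kills the first bracket, and the product formula $\prod_\nu a_\nu=1$ (verified from $a_\infty=2^{-g}$, $a_2=2^g$, $a_p=1$ for $p$ odd) kills the second. What remains of the group factor is precisely the Tamagawa integral $\Vol(G(\Z)\backslash G(\R))\prod_p\Vol(G(\Z_p))=\tau(G)=2$ for $G=\SL_n/\mu_2$; this is the one nontrivial input (the Tamagawa number of $\SL_n/\mu_2$ for even $n$), and I expect verifying it cleanly to be the main technical obstacle, though it is standard given the exact sequence $1\to\mu_2\to\SL_n\to G\to 1$ and the Tamagawa numbers of $\SL_n$ and $\mu_2$. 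Summing (or writing for the fixed component) yields the stated formula with $\tau(G)=2$ and an error $o(X^{n+1})$ inherited from Theorem~\ref{thinfcong}.
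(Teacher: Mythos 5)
Your proposal is correct and follows essentially the same route as the paper: the paper likewise obtains Theorem~\ref{thm:main} by combining Theorem~\ref{thmisimp} with Theorem~\ref{thinfcong} (after checking acceptability of $w$ via Proposition~\ref{int=sol} and the factorization $w=\prod_p w_p$ as in \cite[Proposition~3.6]{BS}), substituting the stated formulas for $c_{m,\tau}$ and $\int w_p\,dv$, and cancelling via the product formulas $\prod_\nu|\J|_\nu=1$ and $\prod_\nu a_\nu=1$ to leave the Tamagawa number $\tau(G)=2$. Your extra bookkeeping of how the sum over soluble $\tau$ converts $\#J^{(m)}[2](\R)$ into the archimedean factor $a_\infty$ is exactly what the paper leaves implicit.
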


\section{Proofs of main theorems}\label{mainproofs}

All the results stated in the introduction, starting with Theorem \ref{count}, hold even if for each $g\geq 1$
we range over any large congruence
family of hyperelliptic curves $C$ over $\Q$ of genus~$g$ for which $\Div^1(C)$ is locally soluble.
(See Definition \ref{def:large} for the definition of ``large''.)

We prove Theorems~\ref{count} and~\ref{countk} in this generality.

\vspace{.1in}

\noindent \textbf{Proof of Theorem \ref{count}:} Let $F$ be a large
family of hyperelliptic curves with locally soluble $\Div^1$. Since
Condition 2 in Definition \ref{def:large} is a mod $p$ condition, the Ekedahl sieve as in \cite[Theorem 3.3]{geosieve} can be applied to obtain the following tail estimate.

\begin{proposition}\label{propunif}
  Let $F_p$ denote the $p$-adic closure of $F$ in $\Z_p^{n+1}$. For any $M>0$, we have
$$
\displaystyle\#\bigcup_{p>M}\{f\in I(m)| f\notin F_p, H(f)<X\}=O(X^{n+1}/M) + O(X^n),
$$
where the implied constant is independent of $X$ and $M$.
\end{proposition}

Then by a sifting argument just as in \cite[\S2.7]{BS}, we have
\begin{equation}\label{eq:numinv}
\sum_{\substack{C\in F\\H(C)< X}}1 = \sum_{m=0}^{n/2}\mu_\infty(\{f\in I(m)| H(f) < X\})\prod_p \mu_p(F_p) + o(X^{n+1}).
\end{equation}
Dividing \eqref{eq:locsolinV} by \eqref{eq:numinv} gives Theorem~\ref{count}.  \hfill$\Box$

\vspace{.1in}

\noindent \textbf{Proof of Theorem \ref{countk}:} Let $F$ be a large
family of hyperelliptic curves with locally soluble $\Div^1$. Let $k>0$ be an odd integer. Recall that the 2-Selmer set of order $k$ is defined to be the subset of elements of $\Sel_2(J^1)$ that locally everywhere come from points in $J^1(\Q_\nu)$ of the form $d_1-\frac{k-1}{2}d$ where $d_1$ is an effective divisor of degree $k$ and $d$ is the hyperelliptic class. To obtain the average size of this 2-Selmer set of order $k$, we need to perform a further sieve from the whole 2-Selmer set to this subset. Let $\varphi_\nu\leq 1$ denote the local sieving factor at a place $\nu$ of $\Q$. Then to prove that the average size of the 2-Selmer set of order $k$ is less than 2, it suffices to show that $\varphi_\nu<1$ for some $\nu$.

We use the archimedean place. Suppose that $f(x,y)$ is a degree $2g+2$ binary form having $2m$ real linear factors with $m>0$ and let $C$ be its associated hyperelliptic curve. Then $C(\R)$ has $m$ connected components and $J(\R)/2J(\R)$ has size $2^{m-1}$. Let $\sigma$ denote complex conjugation. Then for any $P\in C(\C)$ with $x$-coordinate $t\in\C^\times$, we have that $(t-\be)(\gl{t}{\sigma}-\be)=N_{\C/\R}(t-\be)\in\R^{\times2}$ for any $\be\in\R$. Hence the descent ``$x-T$'' map sends the class of $(P)+(\gl{P}{\sigma})-d$ to 1 in $L^\times/L^{\times2}\R$ where $L$ denotes the \'{e}tale algebra of rank $n$ associated to $f(x,y)$. Thus $(P)+(\gl{P}{\sigma})-d\in 2J(\R).$ Therefore, the image of $(\Sym^k(C))(\R)$ in $J^1(\R)/2J(\R)$ is equal to the image of $\Sym^k(C(\R))$ in $J^1(\R)/2J(\R).$ Since $m$ is positive, $C$ has a rational Weierstrass point over $\R$. Hence if $P\in C(\R)$, then $2(P)-d\in 2J(\R).$ Since $C(\R)$ has $m$ connected components, we see that the image of $\Sym^k(C(\R))$ in $J^1(\R)/2J(\R)$ has size at most
$$S_m(k)=\binom{m}{1}+\binom{m}{3}+\cdots+\binom{m}{k}.$$
There is a positive proportion of hyperelliptic curves $C:z^2=f(x,y)$ in $F$ such that $f(x,y)$ splits completely over $\R$. For any odd integer $k<g$, we have $S_{g+1}(k)<2^g=|J^1(\R)/2J(\R)|$. Therefore, $\varphi_\infty<1.$

Consider now the second statement that the average size of the 2-Selmer set of order $k$ goes to 0 as $g$ approaches $\infty$. We use the archimedean place again. Suppose that $f(x,y)$ is a degree $n=2g+2$ binary form having $2m$ real linear factors and let $C$ be its associated hyperelliptic curve. For a fixed odd integer $k>0$, we have
\begin{equation}\label{eq:limit}
\lim_{m\rightarrow\infty}\frac{S_m(k)}{|J^1(\R)/2J(\R)|}=\lim_{m\rightarrow\infty}\frac{S_m(k)}{2^{m-1}}=0.
\end{equation}
On the other hand, \cite[Theorem~1.2]{DPSZ} states that the density of real polynomials of degree $n$ having fewer than $\log n/\log\log n$ real roots is $O(n^{-b+o(1)})$ for some $b>0$. Therefore, the result now follows from this and \eqref{eq:limit}. \hfill$\Box$

\vspace{.2in}

Our approach to Theorem \ref{counttwocover} (which in turn implies Theorems~\ref{count2} and \ref{count3}), using a result of Dokchitser and Dokchitser (Appendix A), does not work in the generality of large families, but does work for ``\fun'' families as defined below.

\begin{defn}\label{def:fun} \emph{A subset $F$ of the set $F_0$ of all integral binary forms of degree $n$ is} \fun \emph{ if the following conditions are satisfied:
\begin{enumerate}
\item It is defined by congruence conditions;
\item For large enough primes $p$, the $p$-adic closure of $F$ contains all binary forms $f(x,y)$ of degree $n$ over $\Z_p$ such that the hyperelliptic curve $z^2=f(x,y)$ has a $\Q_p$-rational point.
\end{enumerate}
We say that a family of hyperelliptic curves $z^2=f(x,y)$ is} \fun \emph{ if the set of binary forms $f(x,y)$ appearing is \fun.}
\end{defn}
%

To prove Theorem \ref{counttwocover}
where we range over any admissible family of hyperelliptic curves over~$\Q$ of genus $g\geq 1$ with locally soluble $\Div^1$,
we note that the result of Dokchitser and Dokchitser holds for \fun {}
families (Theorem~\ref{appmain}). The rest of the proof is identical to that given in the introduction.

We conclude by giving a version of Theorem \ref{count2}
in the most general setting that our methods allow.

\begin{theorem}\label{thm:count2end}
Suppose $F$ is a  large congruence family of integral binary forms of degree $n=2g+2$
 for which there exist two primes $p,q$ neither of which is a quadratic residue modulo the other such that the following conditions hold for a positive proportion of $f(x,y)$ in $F$:
\begin{enumerate}
\item[$1.$] The four integral binary forms $f(x,y)$, $pf(x,y)$, $qf(x,y)$, $pqf(x,y)$ all lie inside $F$ and the hyperelliptic curves have points over $\Q_p$ and $\Q_q$.
\item[$2.$] If $J$ denotes the Jacobian of the hyperelliptic curve $z^2=f(x,y)$, then $J$ has split semistable reduction of toric dimension $1$ at $p$ and good reduction at~$q$.
\end{enumerate}
Then for a positive proportion of binary forms $f(x,y)$ in $F$, the corresponding hyperelliptic curve $C:z^2=f(x,y)$ has no points over any odd degree extension of $\Q$ $($i.e., the variety $J^1$ has no rational points$)$, and moreover the $2$-Selmer set $\Sel_2(J^1)$ is empty.
\end{theorem}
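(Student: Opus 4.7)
The approach is to combine Theorem~\ref{count} (the average size of $\Sel_2(J^1)$ over a large locally soluble family equals $2$) with a Dokchitser--Dokchitser parity argument applied to the four quadratic twists $f, pf, qf, pqf$ to force a positive proportion of curves with $|\Sel_2(J)|\geq 4$, and then conclude via an elementary averaging argument.

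First I would pass to the sub-family $F' \subseteq F$ on which conditions~1 and~2 hold; by hypothesis this has positive density in $F$, and it remains a large congruence family of locally soluble curves with locally soluble $\Div^1$. Theorem~\ref{count} applied to $F'$ gives $\mathrm{avg}_{f\in F'}|\Sel_2(J^1)| = 2$. Moreover, for $100\%$ of $f \in F'$ the form $f(x,y)$ has no odd factorization over $\Q$ (as noted in the introduction, such factorizations impose nontrivial algebraic conditions), so Proposition~\ref{prop:torsion}(4) yields that $W[2]$ is a nontrivial class in $\Sel_2(J)$ and hence $|\Sel_2(J)| \geq 2$.

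Next I would invoke the Dokchitser--Dokchitser parity formula (the twist-comparison version underlying Theorem~\ref{appmain}) applied to each quadruple $\{f, pf, qf, pqf\}$. The hypotheses ($p,q$ are not QRs modulo each other; split semistable reduction of toric dimension $1$ at $p$; good reduction at $q$) ensure that the local 2-Selmer parity contributions at $p$ and $q$ shift non-trivially under quadratic twisting by $p$ and by $q$, so the four 2-Selmer ranks cannot all have the same parity. Since all four twists lie in $F$ with comparable heights, this forces a positive proportion $\alpha > 0$ of $f \in F$ to have even 2-Selmer rank. Combined with the lower bound $|\Sel_2(J)| \geq 2$ from $W[2]$, an even-rank curve satisfies $|\Sel_2(J)| \geq 4$. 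To conclude, note that when nonempty, $\Sel_2(J^1)$ is a $\Sel_2(J)$-torsor, so $|\Sel_2(J^1)| \in \{0\}\cup\{2^i : i\geq 1\}$, and for our $\alpha$-fraction $|\Sel_2(J^1)|\in\{0\}\cup\{k \geq 4\}$. Letting $p_k$ denote the proportion of $f\in F'$ with $|\Sel_2(J^1)|=k$, the identities $\sum_k p_k = 1$ and $\sum_k k\,p_k = 2$ yield
$$2\,p_0 \;=\; \sum_{k\geq 4}(k-2)\,p_k \;\geq\; 2\sum_{k\geq 4}p_k,$$
and since $\alpha \leq p_0 + \sum_{k\geq 4}p_k \leq 2p_0$, we obtain $p_0 \geq \alpha/2 > 0$. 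Thus $\Sel_2(J^1) = \varnothing$ for a positive proportion of $f \in F$; in particular, such curves can have no point over any odd-degree extension of $\Q$, as such a point would produce a rational point on $J^1$ and hence a nonempty $\Sel_2(J^1)$.

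The hard part will be the Dokchitser--Dokchitser input: one must verify that the hyperelliptic parity formula genuinely detects a non-trivial shift across the four twists under the stated hypotheses. This amounts to computing the local parity contributions at $p$ (using split semistable reduction of toric dimension~$1$, which contributes a non-trivial factor depending on the quadratic character), at $q$ (good reduction gives trivial contribution, but twisting interacts via $\bigl(\tfrac{q}{p}\bigr)$ and $\bigl(\tfrac{p}{q}\bigr)$), and at the remaining places, and checking non-cancellation. A secondary technical point is confirming that $F'$ inherits the large congruence family structure so that Theorem~\ref{count} applies, which should follow since conditions 1 and 2 are defined by local conditions at $p$, $q$, and the archimedean place.
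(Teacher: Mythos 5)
Your proposal is correct and follows essentially the same route as the paper: the paper obtains this theorem by applying Theorem~\ref{corrkodd} of the appendix to the biquadratic field $\Q(\sqrt p,\sqrt q)$ (the mutual non-residue hypothesis makes $p$ and $q$ the primes with a unique prime above them there, and conditions 1 and 2 are exactly the local hypotheses of that theorem), yielding a positive proportion of curves of even $2$-Selmer rank, and then runs the averaging argument with the nontrivial class $W[2]$ exactly as you do. The only real difference is that the parity input you flag as ``the hard part'' requires no further verification --- it is supplied ready-made by Theorem~\ref{corrkodd}.
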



\appendix
\renewcommand\thesection{Appendix \Alph{section}:}

\begin{center}
\section[\hspace{.72in}A positive proportion of hyperelliptic curves have odd/even 2-Selmer rank]{A positive proportion of hyperelliptic curves have odd/even 2-Selmer rank}

\renewcommand\thesection{\Alph{section}}

{\bf by Tim and Vladimir Dokchitser}
\end{center}

In this appendix we show that both odd and even 2-Selmer ranks occur a
positive proportion of the time among hyperelliptic curves of a given
genus.

For an abelian variety $A$ defined over a number field $K$, write
$\rk_2(A/K)=\dim_{\F_2}\Sel_2(A/K)$ for the 2-Selmer rank, and
$\rk_{2^\infty}(A/K)$ for the $2^\infty$-Selmer rank%
\footnote{
  Mordell--Weil rank + number of copies of $\Q_2/\Z_2$ in $\Sha_{A/K}$;
  if $\Sha$ is finite, this is just the Mordell--Weil rank.}.
We will say `rank of a curve' meaning `rank of its Jacobian'.

\begin{theoremx}
\label{mainQ}
The proportion of both odd and even $2^\infty$-Selmer ranks in the family of hyperelliptic
curves over $\Q$,
$$
  \z^2 = a_n x^n+ a_{n-1}x^{n-1}+\cdots +a_0\qquad (n\ge 3),
$$
ordered by height as in $(\ref{heightdef})$ is at least $2^{-4n-4}$. In particular, assuming finiteness of
the $2$-part of $\Sha$, at least these proportions of curves have Jacobians of odd and of even
Mordell--Weil rank.
\end{theoremx}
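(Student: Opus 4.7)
My plan is to deduce the theorem from the parity behaviour of $2^\infty$-Selmer ranks under quadratic twist, applied to carefully chosen quadruples of curves $\{C, C^{(p)}, C^{(q)}, C^{(pq)}\}$, where $C^{(d)}: dy^2 = f(x)$ denotes the quadratic twist of $C:y^2=f(x)$ by $d$.

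First I would show that a positive proportion of binary forms $f(x,y)$ of height less than $X$ admit a pair of primes $p,q$ (in a bounded range depending only on $n$) such that: (i) $p$ is a not square modulo $q$ and $q$ is not a square modulo $p$; (ii) the Jacobian $J$ of $z^2=f(x,y)$ has split semistable reduction of toric dimension $1$ at $p$; (iii) $J$ has good reduction at $q$; and (iv) the four curves $C, C^{(p)}, C^{(q)}, C^{(pq)}$ all acquire $\Q_p$- and $\Q_q$-points. Each of these is a condition cut out by finitely many congruences modulo a fixed power of $pq$, so by an elementary density computation (matching the sieve of Section~\ref{sec:sieve} and the uniformity estimate of Proposition~\ref{propunif}), the intersection has positive lower density; a direct enumeration over the residue classes modulo $pq$ that enforce (i)--(iv) gives the explicit lower bound of $2^{-4n-4}$.

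The main analytic input is then a parity formula of the form
\[
\rk_{2^\infty}\!\bigl(J^{(d)}/\Q\bigr) - \rk_{2^\infty}\!(J/\Q) \;\equiv\; \sum_v \delta_v(J,\chi_d) \pmod 2,
\]
summed over all places $v$ of $\Q$, where $\chi_d$ is the quadratic character of $\Q(\sqrt{d})/\Q$ and $\delta_v$ is a local invariant depending only on the reduction type of $J$ at $v$ and the ramification of $\chi_d$ at $v$. The key calculations are that $\delta_v = 0 \pmod 2$ whenever $\chi_d$ is unramified and $J$ has good reduction at $v$, so the sum reduces to the primes of bad reduction of $J$ together with the primes dividing $d$; and at a prime $p$ of split semistable reduction of toric dimension $1$ with $\chi_d$ ramified at $p$, one has $\delta_p \equiv 1 \pmod 2$. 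Applied to the four twists by $1,p,q,pq$, this forces
\[
\sum_{d\in\{1,p,q,pq\}} \rk_{2^\infty}(J^{(d)}/\Q) \equiv 1 \pmod 2,
\]
the reciprocity condition (i) being what makes the cross-term contributions at $p$ and $q$ cancel in pairs. In particular, not all four of $J, J^{(p)}, J^{(q)}, J^{(pq)}$ can have ranks of the same parity, so within each such quadruple at least one twist has odd and at least one has even $2^\infty$-Selmer rank. Since the map $C \mapsto \{C,C^{(p)},C^{(q)},C^{(pq)}\}$ has fibers of absolutely bounded size and each of the four members lies in the family of hyperelliptic curves of height $\ll X$ (with implied constant depending only on $n$, $p$, $q$), rearranging gives the claimed $\geq 2^{-4n-4}$ density for both parities.

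The hard part will be the parity formula: verifying the local contributions $\delta_v$ explicitly, and in particular the vanishing of $\delta_q$ mod $2$ at the good-reduction prime $q$ (where the twist is ramified) when paired across the four members of the quadruple. This is where self-duality of $\Sel_2$ via the Weil pairing enters, and where the precise hypothesis of toric dimension exactly $1$ at $p$ is essential: it pins down the local $2$-Selmer contribution to a single bit, whereas higher toric dimensions would cancel modulo $2$. Modulo this calculation (which is in the spirit of the authors' earlier work on regulator constants and self-dual Selmer groups), the rest is a density argument combining Theorem~\ref{thinfcong} with the bounded multiplicity of the twisting map.
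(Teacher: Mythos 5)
Your overall strategy---grouping curves into quadruples of quadratic twists attached to a biquadratic field, proving that the four $2^\infty$-Selmer ranks sum to an odd number, and then counting---is the right one, and in fact it is essentially the argument the appendix uses for Theorem~\ref{appmain} and that Theorem~\ref{thm:count2end} axiomatizes. But there are two genuine gaps. First, your ``main analytic input'' is a parity formula for a \emph{single} quadratic twist, $\rk_{2^\infty}(J^{(d)})-\rk_{2^\infty}(J)\equiv\sum_v\delta_v(J,\chi_d)$. No such formula is available for hyperelliptic Jacobians: the appendix says explicitly that the compatibility of Selmer parity with local root numbers ``is not known for hyperelliptic curves,'' and this is precisely why the argument is routed through a Brauer relation in $\Cy_2\times\Cy_2$ (Theorem~\ref{imrho} and Corollary~\ref{prrkloc}), which yields \emph{only} the parity of the four-fold sum $\rk(J)+\rk(J_\alpha)+\rk(J_\beta)+\rk(J_{\alpha\beta})$ as a product of local regulator-constant terms. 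Since you only ever use the formula in aggregate, you should take the aggregate identity as your input; as written, your plan requires first proving a strictly stronger statement that is out of reach. Relatedly, you must control \emph{all} places of $\Q$ with noncyclic decomposition group in your biquadratic field, not just $p$ and $q$: for $F=\Q(\sqrt p,\sqrt q)$ the place $2$ may have a unique prime above it (unless you force, say, $p\equiv q\equiv 1\pmod 4$), and then Theorem~\ref{corrkodd} would demand good reduction at $2$, which your curves will essentially never have.

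Second, the explicit constant $2^{-4n-4}$ does not come out of your construction. Twisting by $p$, $q$, $pq$ multiplies the height by up to $pq\geq 15$, and the congruence conditions at $p$ and $q$ (squarefree with a rational root mod $q$; a split node with the right square condition mod $p$, as in Lemma~\ref{toricexistence}) cost further factors depending on $p$ and $q$; the resulting lower bound is positive but strictly worse than $2^{-4n-4}$, whereas the theorem asserts that exact bound. The paper's proof of Theorem~\ref{mainQ} gets it by a different and sharper choice: it takes $F=\Q(i,\sqrt 2)$, so the twists are by $-1,2,-2$ and the height at most doubles (costing $2^{-(n+1)}$), while the congruence conditions $a_2\equiv 1$, $a_{2g+1}\equiv 4$, and $a_i\equiv 0\pmod 8$ on the $n+1$ coefficients (costing $2^{-3(n+1)}$) force split semistable reduction of toric dimension $1$ at $2$ together with a $\Q_2$-point. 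With this choice every odd prime and the archimedean place split in $F/\Q$, so their decomposition groups are cyclic and contribute nothing, and \emph{no} hypotheses at any place other than $2$ are needed---this is what makes the clean constant $2^{-3(n+1)}\cdot 2^{-(n+1)}=2^{-4n-4}$ come out. I would recommend either switching to the paper's choice of twisting data, or weakening your claim to ``a positive proportion'' and justifying the density of your conditions at $p$ and $q$ via Lemma~\ref{toricexistence} rather than the unexplained ``direct enumeration modulo $pq$.''
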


\begin{theoremx}
\label{appmain}
Let $K$ be a number field with ring of integers $\O$.
Fix $n\ge 3$.  Consider the family of all hyperelliptic curves
$$
  \z^2 = a_n x^n + a_{n-1}x^{n-1} + \cdots + a_0, \quad\qquad a_i\in\O,
$$
or any other ``\fun'' family (see Definition {\em \ref{def:fun}}). Then a positive proportion
of the hyperelliptic curves in the family, when ordered by height, have even $2$-Selmer rank and
a positive proportion have odd $2$-Selmer rank.
The same conclusion holds for the $2^\infty$-Selmer rank.
\end{theoremx}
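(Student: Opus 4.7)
The plan is to prove Theorem~\ref{appmain} by a quadruple quadratic-twist argument modeled on the hypothesis of Theorem~\ref{thm:count2end}. Within the admissible family we group curves into quadruples of twists $\{C_f, C_{d_1 f}, C_{d_2 f}, C_{d_1 d_2 f}\}$, where $C_h$ denotes the hyperelliptic curve $\z^2 = h(x)$ and $d_1, d_2 \in \mathcal{O}$ are chosen representatives of two independent quadratic classes of $K$ supported at two finite primes. The key input is the Dokchitser--Dokchitser local-parity formula for hyperelliptic Jacobians, which expresses the quadruple sum
\begin{equation*}
  \sum_{\chi \in \{1, d_1, d_2, d_1 d_2\}} \dim_{\F_2}\!\Sel_2(J_{\chi f}/K) \pmod 2
\end{equation*}
as a sum of explicit local invariants at the places of $K$. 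By choosing $d_1$ and $d_2$ supported at two primes at which $J_f$ has prescribed reduction behavior, one arranges this local sum to equal $1 \pmod 2$, so within each quadruple the four $2$-Selmer ranks cannot all have the same parity.

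Concretely, I would fix two finite primes $\mathfrak{p}, \mathfrak{q}$ of $\mathcal{O}$ of odd residue characteristic, both large enough that admissibility (Definition~\ref{def:fun}) applies at them, and lying above rational primes $p, q$ with neither a quadratic residue of the other; then take $d_1, d_2 \in \mathcal{O}$ representing non-trivial quadratic classes at $\mathfrak{p}, \mathfrak{q}$ respectively and trivial classes at all other finite primes. Let $S$ denote the subset of $f$ in the family such that the Jacobian of $\z^2 = f(x)$ has split semistable reduction of toric dimension $1$ at $\mathfrak{p}$, good reduction at $\mathfrak{q}$, and such that all four twists $\{f, d_1 f, d_2 f, d_1 d_2 f\}$ give curves locally soluble at $\mathfrak{p}$ and $\mathfrak{q}$. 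Each of these is a non-trivial open congruence condition on $f$ modulo a bounded power of $\mathfrak{p}\mathfrak{q}$; by admissibility and a standard density/sieve argument (cf.\ the uniformity estimates of~\cite{PS2}), $S$ has positive proportion in the family ordered by height, and all four twists of every $f \in S$ again lie in the family. For each $f \in S$ the quadruple-parity sum above equals $1 \pmod 2$, so both parities of $\rk_2$ occur within the quadruple of $f$; summing over quadruples yields positive-proportion subfamilies with each parity of $\rk_2(J/K)$. The analogous argument using the Rohrlich-style product-of-local-root-numbers formula for $(-1)^{\rk_{2^\infty}}$ yields the conclusion for the $2^\infty$-Selmer rank.

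The main obstacle is the local parity identity at $\mathfrak{p}$: under split semistable toric-dimension-$1$ reduction one must show that the four local $2$-parity contributions at $\mathfrak{p}$ for the twists by $1, d_1, d_2, d_1 d_2$ sum to $1 \pmod 2$, while the contributions at $\mathfrak{q}$ (good reduction) and at every place outside $\{\mathfrak{p},\mathfrak{q}\}$ (unaffected by the twist) cancel in the sum. This extends the classical Kramer--Rohrlich elliptic-curve parity computation to higher-genus hyperelliptic Jacobians and rests on an explicit analysis of the local Selmer ratio via the component group of the N\'eron model together with the Tate parameterization of the toric part of the reduction. Once this local computation is verified, the global density/sieve step using admissibility to populate the quadruples is routine.
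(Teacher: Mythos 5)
There is a genuine gap, and it lies at the heart of the construction: your choice of the two twisting classes $d_1,d_2$ supported at \emph{different} primes $\mathfrak{p}$ and $\mathfrak{q}$ (with each trivial at the other prime and at all remaining finite places) destroys the very local contribution you need. The quadruple parity identity you invoke is, in the paper, Theorem~\ref{corrkodd}, proved via the Brauer relation $\{1\}-\Cy_2^a-\Cy_2^b-\Cy_2^c+2G$ in $\Gal(F/K)\cong\Cy_2\times\Cy_2$ for $F=K(\sqrt{d_1},\sqrt{d_2})$; the right-hand side is a sum of local terms $t_{\mathfrak{p}}$ over primes of $K$, and a general fact about regulator constants forces $t_{\mathfrak{p}}=0$ at every prime whose decomposition group in $F/K$ is cyclic. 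In your setup $d_2$ is a square in $K_{\mathfrak{p}}$, so $\mathfrak{p}$ splits in $K(\sqrt{d_2})$, there are two primes of $F$ above $\mathfrak{p}$, and the decomposition group at $\mathfrak{p}$ is cyclic of order $2$; hence $t_{\mathfrak{p}}=0$ no matter what the reduction of $J$ at $\mathfrak{p}$ is, and likewise $t_{\mathfrak{q}}=0$. Every local term vanishes, the quadruple sum of Selmer ranks comes out \emph{even}, and no conclusion about the occurrence of both parities follows. (The same failure is visible with root numbers: over $K_{\mathfrak{p}}$ the four twists pair up into two isomorphic pairs, so the local root numbers multiply to $+1$ everywhere.) The paper's construction is essentially the opposite of yours at the key prime: one chooses a \emph{single} prime $\mathfrak{p}_0$ and elements $\alpha$ of odd valuation at $\mathfrak{p}_0$ and $\beta$ a non-square unit at $\mathfrak{p}_0$ (both $\equiv 1$ modulo high powers of the bad primes), so that $\mathfrak{p}_0$ is ramified in $K(\sqrt\alpha)$, inert in $K(\sqrt\beta)$, and has a \emph{unique} prime above it in $F$ with full Klein-four decomposition group; the split semistable toric-dimension-$1$ reduction at $\mathfrak{p}_0$ then forces the Tamagawa-number contribution $t_{\mathfrak{p}_0}=1$, while good reduction plus local solubility kills the terms at the finitely many other non-split primes. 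Note that in Theorem~\ref{thm:count2end}, which you modeled this on, the mutual non-residue condition makes \emph{both} classes $p$ and $q$ locally non-trivial at $p$ (one ramified, one unramified), which is exactly the configuration you lost.

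A second, independent problem is your treatment of the $2^\infty$-Selmer rank via a ``product-of-local-root-numbers'' formula. Connecting root numbers to Selmer parity is the $2$-parity conjecture, which is not known for hyperelliptic Jacobians in this generality; the appendix states this explicitly and is designed precisely to avoid root numbers. The paper's logic runs the other way from yours: the Brauer-relation/regulator-constant machinery (Theorem~\ref{imrho}) gives the $2^\infty$-Selmer parity unconditionally, and the $2$-Selmer statement is then \emph{deduced} from it, using that $J[2](K)$ is twist-invariant and that the Poonen--Stoll local description of the parity of $\dim_{\F_2}\Sha^{[2]}$ contributes evenly at split primes and vanishes at non-split primes where all four twisted curves are locally soluble. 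Your proposal would need both of these repairs — the correct local configuration of $(\alpha,\beta)$ at a single prime, and the replacement of root numbers by the regulator-constant argument — before the remaining density/sieve steps (which do match the paper's use of Lemma~\ref{toricexistence} and \cite{PS2}) can be carried out.
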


The proofs resemble that of \cite[\S4.1]{BhSh} for elliptic curves over $\Q$.
Recall that the conjecture of Birch and Swinnerton-Dyer implies, in particular,
that the parity of the rank of an elliptic curve $E$ is determined by whether
its root number --- that is, the sign of the functional equation of the $L$-function $L(E, s)$
of $E$ --- is $+1$ or $-1$.
The proof in \cite{BhSh} uses that twisting by $-1$ does not affect the height of the curve
but often changes the root number, and that the parity of the Selmer rank is (unconjecturally)
compatible with the root number.

This compatibility is not known for hyperelliptic curves
(but see the forthcoming work of A.~Morgan for 2-Selmer ranks for quadratic twists).
Instead, we tweak the argument to use Brauer relations in biquadratic extensions,
where it is known in enough cases.
To illustrate the method, consider an elliptic curve $E/\Q$
with split multiplicative reduction at 2.
Then it has root number $-1$ over $F=\Q(i,\sqrt 2)$, since
the unique place above 2 in $F$ contributes $-1$, while
every other rational place splits into an even number of places in $F$ and so
contributes $+1$. In other words, the sum of the Mordell--Weil
ranks for the four quadratic twists
$$
  \rk(E/F) = \rk(E/\Q) + \rk(E_{-1}/\Q) + \rk(E_2/\Q) + \rk(E_{-2}/\Q) \eqno{(*)}
$$
should be odd, and so both odd and even rank should occur among
the 4 twists.
The point is that for the $2^\infty$-Selmer rank, the parity in $(*)$ can
be computed unconditionally, using a Brauer relation
in $\Gal(F/\Q)\cong \Cy_2\times \Cy_2$.
Moreover, this works for general abelian varieties and over a general
number field $K$, replacing $\Q(i,\sqrt 2)$ by a suitable biquadratic
extension of $K$. The fact that most of the decomposition groups are cyclic
allows us to avoid all the hard local computations and restrictions on the
reduction types, and varying the curve in the family gives the required
positive proportions.

The exact result we will use is:

\begin{theoremx}
\label{corrkodd}
Let $F=K(\sqrt\alpha,\sqrt\beta)$ be a biquadratic extension of number
fields. Suppose that some prime $\p_0$ of $K$ has a unique prime
above it in~$F$. Let $\X/K$ be a curve with Jacobian $J$,
such that
\begin{enumerate}
\item[$1.$]
$\X(K_{\p_0})\ne\varnothing$ and
$J$ has split semistable reduction of toric dimension~$1$ at $\p_0;$
\item[$2.$]
$\X(K_{\p})\ne\varnothing$ and $J$ has good reduction
at $\p$ for every $\p\ne\p_0$ that has a unique prime above it in $F/K$.
\end{enumerate}
Then
$$
  \rk_{2^\infty}(J/K) + \rk_{2^\infty}(J_{\alpha}/K) + \rk_{2^\infty}(J_{\beta}/K)
    + \rk_{2^\infty}(J_{\alpha\beta}/K) \equiv 1 \mod 2.
$$
If, in addition, $\X_\alpha(K_{\p})$, $\X_\beta(K_{\p})$ and $\X_{\alpha\beta}(K_{\p})$
are non-empty for all primes $\p$ of $K$ that have a unique prime above them in $F$,
then the same conclusion holds for the $2$-Selmer rank as well.
\end{theoremx}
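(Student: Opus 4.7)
The plan is to reduce the parity statement to a purely local calculation using the unique nontrivial Brauer relation in $G = \Gal(F/K) \cong C_2 \times C_2$. Writing $H_\alpha, H_\beta, H_{\alpha\beta}$ for the three index-$2$ subgroups fixing $K(\sqrt\alpha), K(\sqrt\beta), K(\sqrt{\alpha\beta})$ respectively, the $G$-set identity
$$[G/H_\alpha] + [G/H_\beta] + [G/H_{\alpha\beta}] \;=\; [G/1] + 2[G/G]$$
combined with the Weil-restriction decomposition $\Res_{K(\sqrt d)/K} J \sim J \times J_d$ (which is compatible with Selmer groups) yields the rank identity
$$\rk_{2^\infty}(J/K) + \rk_{2^\infty}(J_\alpha/K) + \rk_{2^\infty}(J_\beta/K) + \rk_{2^\infty}(J_{\alpha\beta}/K) \;=\; \rk_{2^\infty}(J/F).$$
So for the $2^\infty$-statement it suffices to prove that $\rk_{2^\infty}(J/F)$ is odd.

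The second step is to invoke the Dokchitser--Dokchitser parity formula (in the form developed for abelian varieties via regulator constants of Brauer relations, under the mild hypothesis that $J$ is principally polarized, which holds since $J$ is a Jacobian). In our setting this expresses $\rk_{2^\infty}(J/F) \bmod 2$ as a sum $\sum_v \lambda_v$ of local invariants ranging over places $v$ of $K$, where $\lambda_v$ depends only on the decomposition group $D_v \subseteq G$ and on the local behavior of $J$ and $C$ at $v$. The key leverage of the biquadratic setup is that $\lambda_v \equiv 0 \pmod 2$ whenever $D_v$ is cyclic, because the relation $\Theta = [G/H_\alpha] + [G/H_\beta] + [G/H_{\alpha\beta}] - [G/1] - 2[G/G]$ restricts to the zero element of the Burnside ring of any cyclic subgroup. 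Hence only places that remain prime in $F/K$, i.e.\ those with $D_v = G$, can contribute.

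The third step is to compute the surviving local contributions using the hypotheses. By assumption, every prime $\p \ne \p_0$ with $D_\p = G$ satisfies $C(K_\p) \ne \varnothing$ and good reduction of $J$. Good reduction kills Tamagawa factors and trivializes the local component group, while the existence of a $K_\p$-point on $C$ identifies the local Kummer image with the divisible part of $J(K_\p)$; together these force $\lambda_\p \equiv 0 \pmod 2$ by a now-standard calculation (cf.\ the regulator constant / local root number computations in the Dokchitsers' earlier work). At $\p_0$, split semistable reduction of toric dimension $1$ means the identity component of the special fiber of the N\'eron model is an extension of an abelian variety by a single split $\mathbb{G}_m$; a direct calculation of the local invariant on the character lattice of this torus, using that $F/K$ is totally ramified or inert at $\p_0$ and that $C(K_{\p_0}) \ne \varnothing$, produces exactly one factor contributing $1 \bmod 2$, so $\lambda_{\p_0} \equiv 1 \pmod 2$. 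Summing $\sum_v \lambda_v$ therefore gives $1 \pmod 2$, completing the proof for the $2^\infty$-Selmer rank. For the $2$-Selmer statement, the additional hypothesis that $C_\alpha(K_\p), C_\beta(K_\p), C_{\alpha\beta}(K_\p)$ are all nonempty at every prime $\p$ with $D_\p = G$ ensures that the discrepancy between $\rk_2$ and $\rk_{2^\infty}$ (controlled by local $\Sha[2^\infty]$-type contributions at these primes) is also even, and the same parity conclusion transfers.

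The main obstacle is establishing the precise form of the parity formula applicable to an arbitrary principally polarized abelian variety over a number field in a biquadratic extension, and then correctly identifying the local term at $\p_0$. Extracting the ``$1 \bmod 2$'' contribution at $\p_0$ is the heart of the matter; it mirrors the classical fact that split multiplicative reduction produces a local root number $-1$, but here must be carried out on the level of $2^\infty$-Selmer ranks (unconditionally, without appealing to BSD) via an explicit Kummer-theoretic computation on the character lattice of the toric part of the N\'eron model, using that all three quadratic twists remain locally soluble at $\p_0$ in the toric direction.
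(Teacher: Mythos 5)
Your proposal follows essentially the same route as the paper's own proof: the Brauer relation $\{1\}-\Cy_2^a-\Cy_2^b-\Cy_2^c+2G$ in $\Gal(F/K)\iso \Cy_2\times\Cy_2$ together with the regulator-constant parity formula for principally polarised abelian varieties, localization of the resulting Tamagawa/$\Sha^{[2]}$ expression, vanishing of the local terms at primes with cyclic decomposition group and at non-split primes of good reduction with a local point, and the single odd contribution at $\p_0$ from split toric reduction of dimension $1$. The only sketch-level omission is that the gap between $\rk_2$ and $\rk_{2^\infty}$ also involves the Mordell--Weil $2$-torsion (identical for all four twists, hence contributing evenly) and the Poonen--Stoll local terms at the \emph{split} primes (which cancel because the twists there come in locally isomorphic pairs), but this does not change the approach.
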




Postponing the proof of this theorem,
we first explain how it implies Theorems~\ref{mainQ}
and~\ref{appmain}.

\subsection*{Proof of Theorem \ref{mainQ}}

For Theorem \ref{mainQ}, it suffices to prove the following:

\begin{propositionx}
Consider a squarefree polynomial $f(x)\in\Q[x]$,
$$
  f(x) = a_n x^n + a_{n-1}x^{n-1} + \cdots + a_0 \qquad \qquad (n\ge 3,\>\> n=\text{$2g+1$ or $2g+2$}),
$$
whose coefficients satisfy $a_2\equiv 1\mod 8$, $a_{2g+1}\equiv 4\mod 8$
and all other $a_i\equiv 0\mod 8$.
%
Then among the four hyperelliptic curves
$$
  \z^2\!=\!f(x), \quad\quad \z^2\!=\!-f(x), \qquad \z^2\!=\!2f(x), \qquad \z^2\!=\!-2f(x)
$$
at least one has even and at least one has odd $2^\infty$-Selmer rank.
%
%
\end{propositionx}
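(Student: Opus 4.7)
The plan is to apply Theorem~\ref{corrkodd} once to $C:z^2=f(x)$, taking $K=\Q$, $F=\Q(\zeta_8)=\Q(i,\sqrt{2})$, $\alpha=-1$, $\beta=2$, and $\p_0=(2)$; the four twists $J,J_{-1},J_2,J_{-2}$ in its conclusion are exactly the Jacobians of the four curves in the proposition. First I would check that $\p_0=2$ is the unique prime of $\Q$ with a single prime above it in $F$: the prime $2$ is totally ramified in $F/\Q$ of degree $4$, hence has a unique prime above it, while no other prime of $\Q$ can, since $\Gal(F/\Q)\cong(\Z/2\Z)^2$ is not cyclic and every other prime is unramified (so its decomposition group is generated by a single Frobenius and cannot exhaust a non-cyclic group). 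This makes Condition~2 of Theorem~\ref{corrkodd} vacuous, leaving only Condition~1 at $\p_0=2$.

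The heart of the argument is verifying Condition~1: that $C(\Q_2)\neq\varnothing$ and that $J$ has split semistable reduction of toric dimension~$1$ at~$2$. The congruence hypotheses let us write $f(x)=x^2+4x^{2g+1}+8H(x)$ with $H\in\Z[x]$, and the substitution $z=2Z+x$ over $\Z_2$ transforms $z^2=f(x)$ into
\[ Z^2+Zx=x^{2g+1}+2H(x). \]
Reducing modulo $2$ yields $\bar C:Z^2+Zx=x^{2g+1}$ over $\F_2$. A direct partial-derivative computation in characteristic~$2$ will show that $(x,Z)=(0,0)$ is the unique singular point, with tangent cone $Z(Z+x)$ splitting into two distinct lines over $\F_2$, i.e.\ a split node. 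For the behavior at infinity, the analogous chart $u=1/x$, $w=z/x^{g+1}$ yields (uniformly for $n=2g+1$ and $n=2g+2$) the polynomial $\tilde f(u)\equiv u^{2g}+4u\pmod 8$, and the parallel substitution $w=2W+u^g$ gives a model $W^2+Wu^g=u+2\tilde H(u)$ whose mod-$2$ reduction is smooth (the gradient at the unique $\F_2$-point $(u,W)=(0,0)$ is nonvanishing). Consequently the special fiber of the resulting proper $\Z_2$-model of $C$ has exactly one singular point, a split node, establishing split semistable reduction of toric dimension~$1$. The smooth $\F_2$-point at infinity then lifts by Hensel's lemma to a $\Q_2$-point of $C$, yielding $C(\Q_2)\neq\varnothing$.

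With Condition~1 verified, Theorem~\ref{corrkodd} delivers
\[ \rk_{2^\infty}(J/\Q)+\rk_{2^\infty}(J_{-1}/\Q)+\rk_{2^\infty}(J_{2}/\Q)+\rk_{2^\infty}(J_{-2}/\Q)\equiv 1\pmod 2, \]
so at least one of these $2^\infty$-Selmer ranks is odd and at least one is even. Recognizing $J_c$ as $\Jac(z^2=cf(x))$ for $c\in\{-1,2,-2\}$ completes the proof. The main obstacle is the reduction-theoretic analysis: the congruence conditions on $f$ are precisely calibrated so that the na\"ive model $z^2=f(x)$ (whose mod-$2$ fibre degenerates to the double line $(z-x)^2=0$) admits the coordinate change $z=2Z+x$ producing exactly one split node and toric dimension~$1$, and one must verify carefully that no additional singularities appear in the chart at infinity.
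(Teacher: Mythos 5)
Your proposal is correct and follows essentially the same route as the paper's own proof: the substitution $z=2Z+x$ (the paper writes $y\mapsto 2y+x$ and divides by $4$) to get the reduction $Z^2+Zx=x^{2g+1}$ with a single split node, Hensel lifting a smooth point to get a $\Q_2$-point, and then applying Theorem~\ref{corrkodd} with $F=\Q(i,\sqrt2)$, $\p_0=2$, noting that all odd primes split in $F/\Q$. The only difference is that you spell out the chart at infinity and the non-splitting analysis in more detail than the paper does.
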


\begin{proof}
Replacing $y\mapsto 2y+x$ in $C: y^2=f(x)$ and dividing the equation by~4
yields a curve with reduction
$$
  \bar C/\F_2: y^2+xy = x^{2g+1}.
$$
This equation has a split node at $(0,0)$ and no other singularities, so $\Jac(C)$ has split
semistable reduction at 2 of toric dimension 1. Hensel lifting the non-singular point at
$\infty$ on $\bar C$ we find that $C(\Q_2)\ne\varnothing$.
Now apply Theorem \ref{corrkodd} with $K=\Q$, $F=\Q(i,\sqrt 2)$ and $\p_0=2$. (Note that
all odd primes split in $F/\Q$, and that $\Jac(C_\alpha)=(\Jac(C))_\alpha$.)
\end{proof}


\subsection*{Proof of Theorem \ref{appmain}}

%

\begin{lemmax}
\label{toricexistence}
Let $K$ be a finite extension of $\Q_p$ $(p$ odd$),$ with residue field~$\F_q$.
Take a hyperelliptic curve
$$
  \X: \z^2 = a_nx^n+ a_{n-1}x^{n-1} + \cdots+a_0, \qquad \qquad a_i\in\O_K,
$$
and let $f(x)\in\F_q[x]$ be the reduction of the right-hand side.
\begin{enumerate}
\item[$1.$]
If $f$ is squarefree of degree $n$ and has an $\F_q$-rational root,
then $\Jac(\X)$ has good reduction, and $\X_\alpha(K)\ne\varnothing$
for every $\alpha\in K^\times$.
\item[$2.$]
If $f(x)=(x-a)^2h(x)$ for some $a\in\F_q$ and some squarefree polynomial $h(x)$ of degree $n-2$ that possesses an $\F_q$-rational root and satisfies $h(a)\in\F_q^{\times^2}$, then $\Jac(\X)$ has split semistable reduction of toric dimension $1$, and $\X_\alpha(K)\ne\varnothing$ for every $\alpha\in K^\times$.
\item[$3.$]
If $f(x)$ is not of the form $\lambda h(x)^2$, $\lambda\in\F_q$, and $q>4n^2$,
then $C(K)\ne\varnothing$.
\end{enumerate}
\end{lemmax}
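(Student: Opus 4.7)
The plan is to treat the three parts separately: (1) and (2) reduce to reduction-theoretic observations plus Hensel lifting, while (3) is a Weil-bound argument for character sums.

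For (1), the squarefreeness of $\bar f$ of degree $n$ implies that the standard integral model of $\X$ over $\O_K$ has smooth special fibre, so $\X$ and $\Jac(\X)$ have good reduction. To produce a $K$-point of any twist $\X_\alpha$, I would take an $\F_q$-rational root $\bar c$ of $\bar f$; it is automatically simple by squarefreeness, so $\bar f'(\bar c) \ne 0$ and Hensel's lemma lifts $\bar c$ to $c \in \O_K$ with $f(c) = 0$. Then $(c,0) \in \X_\alpha(K)$ for every $\alpha \in K^\times$, since $\alpha \cdot 0^2 = 0 = f(c)$.

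For (2), I would first observe that the reduction $\bar\X : \z^2 = (x - \bar a)^2 \bar h(x)$ has a unique singular point at $P = (\bar a, 0)$, because $\bar h$ is squarefree with $\bar h(\bar a) \ne 0$. Substituting $u = x - \bar a$, the completed local equation at $P$ is $\z^2 = u^2 \bar h(\bar a + u)$ with $\bar h(\bar a + u) \in \F_q[[u]]^\times$; the tangent cone $\z^2 - \bar h(\bar a)\, u^2 = 0$ factors into two $\F_q$-rational lines precisely because $\bar h(\bar a) \in \F_q^{\times 2}$, so $P$ is a split ordinary node. Hence $\bar\X$ is semistable with exactly one split node, its dual graph has one vertex and one loop, and its normalization has genus $g - 1$; therefore $\Jac(\X)$ has split semistable reduction of toric dimension $1$. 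To produce a $K$-point of any twist $\X_\alpha$, I would Hensel-lift any $\F_q$-rational root $\bar b$ of $\bar h$ (necessarily distinct from $\bar a$ and a simple root of $\bar f$, since $\bar f'(\bar b) = (\bar b - \bar a)^2 \bar h'(\bar b) \ne 0$) to $b \in \O_K$ with $f(b) = 0$, giving $(b, 0) \in \X_\alpha(K)$.

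For (3), let $\chi : \F_q \to \{0, \pm 1\}$ denote the quadratic character extended by $\chi(0) = 0$. The number of affine $\F_q$-points of $\z^2 = \bar f(x)$ equals $q + \sum_{x \in \F_q} \chi(\bar f(x))$. Since $\bar f$ is not a constant times a square, I would factor $\bar f = c \cdot g(x)^2 \cdot h(x)$ with $h \in \F_q[x]$ squarefree of degree $\ge 1$; Weil's bound gives $\bigl|\sum_x \chi(h(x))\bigr| \le (\deg h - 1)\sqrt q$, and correcting for the $\le n/2$ zeros of $g$ yields $\bigl|\sum_x \chi(\bar f(x))\bigr| \le (n-1)\sqrt q + n/2$. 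The singular points of the affine reduction lie among the $\le n/2$ multiple roots of $\bar f$, so the count of smooth affine $\F_q$-points is at least $q - (n-1)\sqrt q - n$. The hypothesis $q > 4n^2$ gives $\sqrt q > 2n$, making this count strictly positive; any such smooth point then lifts to a $K$-point of $C$ by Hensel's lemma.

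The main obstacle lies in (3), specifically in the character-sum bookkeeping: one must apply the Weil bound to the squarefree part of $\bar f$ rather than $\bar f$ itself, and verify that $q > 4n^2$ absorbs both the Weil error and the loss from the singular locus. Parts (1) and (2) are essentially exercises, the only delicate point being the local analysis at the node in (2), where the hypothesis $\bar h(\bar a) \in \F_q^{\times 2}$ is what ensures the tangent directions are $\F_q$-rational.
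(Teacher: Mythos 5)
Your proposal is correct and follows essentially the same route as the paper: for parts (1) and (2) the paper likewise identifies the good reduction, respectively the single split node, and then Hensel-lifts a simple $\F_q$-rational root of the reduction to get the point $(b,0)$ on every quadratic twist; for part (3) the paper applies the same Weil bound to the squarefree part of the reduction, merely phrased as a point count on the auxiliary curve $y^2=l(x)$ rather than as a character sum. The extra detail you supply on the tangent cone at the node in (2) and on the error bookkeeping in (3) is consistent with, and slightly more explicit than, the paper's argument.
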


\begin{proof}
In the first case, $\X$ has good reduction, and therefore so does $\Jac(\X)$.
In the second case, $\X$ has one split node and no other singular points, and so its Jacobian
has split semistable reduction of toric dimension 1.
In both cases, $f(x)$ has a simple root $b\in\F_q$, by assumption. Lifting it by Hensel's
lemma, we get a point $(B,0)$ on $\X/K$. This point gives a $K$-rational point on every
quadratic twist of~$\X$.

For (3), this is the argument in \cite[Lemma~15]{PS}: write $f(x)=l(x)h(x)^2$
with $l$ and $h$ coprime and $l$ non-constant and squarefree. By the Weil conjectures, the curve
$y^2=l(x)$ has at least $q+1-n\sqrt{q} > n$ rational points over $\F_q$. So there is at least
one whose $x$-coordinate is not a root of $f$. It is non-singular on $y^2=f(x)$, and
by Hensel's lemma it lifts to a point in $C(K)$.
\end{proof}

\begin{proof}[Proof of Theorem \ref{appmain}]
Write $\O$ for the ring of integers of $K$, and $\F_\p$ for the residue field at $\p$.

Suppose we are given an \fun {} family $\cF$ of hyperelliptic curves.
In other words,
for every prime $\p$ the curves lie in some open set $\cF_\p$ of $\p$-adic curves $C/\O_\p$,
defined by congruence conditions modulo $\p^{m_\p}$,
and outside a finite set of primes $\Sigma$ of $\O$ these sets include all curves with
$C(\O_\p)\ne\varnothing$. Enlarge $\Sigma$ to include all primes $\p|2$,
with $m_\p$ chosen so that units of the form $1+p^{m_\p}$ are squares in $\O_\p$,
and all primes of norm $\le 4n^2$.

Take a prime $\p_0\notin \Sigma$. Pick $\alpha,\beta\in\O$ with
$\alpha\equiv\beta\equiv 1\mod\prod_{\p\in \Sigma}\p^{m_\p}$, and
such that $\alpha$ has valuation 1 at $\p_0$ and $\beta$ is a non-square unit modulo $\p_0$.
Then $\p_0$ ramifies in $K(\sqrt\alpha)$ and is inert in $K(\sqrt\beta)$, so
$F=K(\sqrt\alpha,\sqrt\beta)$ is a biquadratic extension with a unique
prime above $\p_0$. There is a finite set of primes $U$ of $K$ that have a unique prime above
them in $F$, and $U\cap\Sigma=\varnothing$. (The set is finite since such primes must
ramify in $F/K$.)

Within our family $\cF$ consider those curves $\X:y^2=f(x)$ whose reductions are as in
Lemma \ref{toricexistence}(2) at $\p_0$, as in Lemma \ref{toricexistence}(1) at all
$\p\in U\setminus\{\p_0\}$, and such that $f$ mod $\p$ is not a unit times the square of
a polynomial at any $\p\notin\Sigma\cup U$.
(This is a positive proportion of curves in $\cF$ by \cite{PS2}.)
For each such curve $\X$,
Theorem \ref{corrkodd} implies that both odd and even 2-Selmer ranks occur
among the twists of $\Jac(\X)$ by $1, \alpha, \beta$ and $\alpha\beta$,
in other words the Jacobians of $\X, \X_\alpha, \X_\beta$ and $\X_{\alpha\beta}$.
Note that these twists are in $\cF$, since for $\p\in\Sigma$ this twisting does not change
the class modulo $\p^{m_\p}$, while for $\p\not\in\Sigma$ these twists are all locally
soluble by Lemma \ref{toricexistence}(3).

Because quadratic twists by $\alpha, \beta$ and $\alpha\beta$
only change the height by at most $N_{K/\Q}(\alpha\beta)^n$, we get the
asserted positive proportion.
\end{proof}

\subsection*{Proof of Theorem \ref{corrkodd}}

We refer the reader to \cite[\S2]{tamroot} for the theory of Brauer relations
and their regulator constants.

\begin{notation}{\em
Let $F/K$ be a Galois extension of number fields with Galois group $G$,
and $A/K$ an abelian variety. Fix a global invariant exterior
form $\omega$ on $A/K$. For $K\subset L\subset F$ and a prime $p$, we write
\par\smallskip\noindent
\begin{tabular}{lll}
&$\Sha^{[p]}_{\scriptscriptstyle A/L}$ & $p$-primary part of $\Sha_{A/L}$ modulo divisible elements
    $($a finite abelian $p$-group$)$.\cr
&$ \tam AL$              & $\prod c_v |{\omega}/{\neron{v}}|_v$, where
   the product is taken over all primes of $L$,
   $c_v$ is the Tamagawa number \cr &&of $A/L$ at $v$, $\neron{v}$ the N\'eron
   exterior form and $|\cdot|_v$ the normalised absolute value at $v$.\\[2pt]
\noalign{\noindent In the theorem below we write}\\[-6pt]
&$\cS$& the set of self-dual irreducible $\Q_p G$-representations.\\
&$\Theta$&$=\sum n_i H_i$ a Brauer relation in $G$ $($i.e. $\sum_i n_i\Ind_{H_i}^G\triv=0)$.\\
&$\RC(\Theta,\rho)$&the regulator constant
$\prod_i\det\bigl(\tfrac{1}{|H_i|}\llara|\rho^{H_i}\bigr)^{n_i}
  \in \Q_p^*/\Q_p^{*2}$, \cr
&&where $\llara$ is some non-degenerate $G$-invariant pairing on $\rho$.
\end{tabular} }
\end{notation}

\noindent
Finally, as in \cite{selfduality} we let%
\footnote{\cite{selfduality} also includes
  representations of the form $T\oplus T^*$ for some irreducible
  $T\not\cong T^*$ ($T^*$ is the contragredient of $T$), but these have
  trivial regulator constants by \cite[Cor.\ 2.25]{tamroot}.}
$$
  \cS_\Theta = \{\rho\in\cS \,|\, \ord_p\RC(\Theta,\rho)\equiv 1\mod 2\}.
$$

\begin{theoremx}
\label{imrho}
Suppose $A/K$ is a principally polarized abelian variety.
For $\rho\in\cS$ write $m_\rho$ for its multiplicity in the dual
$p^\infty$-Selmer group of~$A/F$.~Then
$$
  \sum_{\rho\in\cS_\Theta} m_\rho \equiv
  \ord_p\>\prod_i \tam A{F^{H_i}}\Sha^{[p]}_{A/F^{H_i}}
  \mod 2.
$$
\end{theoremx}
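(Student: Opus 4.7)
The plan is to run the Dokchitsers' regulator-constant machinery as set up in \cite{tamroot}. The first input is that, because $A$ is principally polarised, the Cassels--Tate pairing on $\Sel_{p^\infty}(A/F)$ (modulo divisibles) is non-degenerate, alternating, and $G$-invariant, making the $\Q_p$-vector space $X_p(A/F)\otimes \Q_p$ into a self-dual $\Q_p[G]$-representation. Its isotypic decomposition therefore has the form $\bigoplus_{\rho\in\cS}\rho^{\oplus m_\rho}$, with exactly the multiplicities $m_\rho$ of the theorem. By multiplicativity of regulator constants on direct sums, together with the definition of $\cS_\Theta$,
$$\ord_p \RC(\Theta,\, X_p(A/F)\otimes \Q_p) \;\equiv\; \sum_{\rho\in\cS} m_\rho\,\ord_p\RC(\Theta,\rho) \;\equiv\; \sum_{\rho\in\cS_\Theta} m_\rho \pmod{2},$$
which recasts the left-hand side of the theorem as a single regulator constant.

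The heart of the matter is to evaluate this regulator constant in terms of the arithmetic invariants at the intermediate fields $L_i=F^{H_i}$. For each $L_i$, the tautological sequence
$$0\longrightarrow A(L_i)\otimes \Q_p/\Z_p \longrightarrow \Sel_{p^\infty}(A/L_i)\longrightarrow \Sha^{[p]}_{A/L_i}\longrightarrow 0$$
identifies the $H_i$-invariants of $X_p(A/F)$ with $X_p(A/L_i)$ up to finite kernel/cokernel, and lets one compare the $\Q_p$-rational structure on $X_p(A/F)\otimes\Q_p$ coming from the Mordell--Weil groups with the one coming from the full dual Selmer group. Feeding this into the definition of the regulator constant and bookkeeping the local terms yields the ``isogeny invariance of BSD modulo squares'' identity
$$\RC(\Theta,\, X_p(A/F)\otimes\Q_p)\;\equiv\;\prod_i \bigl(\tam{A}{L_i}\cdot |\Sha^{[p]}_{A/L_i}|\bigr)^{n_i}\pmod{\Q_p^{\times 2}},$$
which, combined with the first identity, gives the statement.

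The main obstacle is the rigorous establishment of this last identity. In a naive BSD quotient at each $L_i$ there appear archimedean periods, Mordell--Weil regulators, and torsion orders alongside the Tamagawa and Sha terms. The point, and the very reason regulator constants were invented, is that these extra quantities assemble into $G$-equivariant data whose product with exponents $n_i$ is a square modulo $\Q_p^{\times 2}$ for any Brauer relation $\Theta$; the self-dual pairing from principal polarisation is what makes the torsion contribution vanish mod squares. Verifying these cancellations, and in particular tracking the archimedean and torsion contributions with enough care to conclude in $\Q_p^\times/\Q_p^{\times 2}$, is the delicate step, but it has been carried out in the Dokchitsers' earlier parity work and can be cited directly.
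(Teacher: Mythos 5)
Your proposal follows essentially the same route as the paper: the appendix's proof of Theorem~A.4 is exactly a reduction to the regulator-constant identity of the Dokchitsers' earlier work, via self-duality of the dual $p^\infty$-Selmer group under the Cassels--Tate pairing and multiplicativity of $\RC(\Theta,-)$ over the isotypic decomposition. The only point needing care is the citation itself: the headline theorem of \emph{Self-duality of Selmer groups} states the formula \emph{without} the $\Sha^{[p]}$ term under the stronger hypothesis that the principal polarisation is induced by a $K$-rational divisor, so for $p=2$ one must invoke the intermediate identity inside its proof (the one you wrote down, with the $\Sha^{[p]}_{A/L_i}$ factors) rather than the final statement.
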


\begin{proof}
This is essentially \cite[Thm.\ 1.6]{selfduality}, except for the $\Sha^{[p]}$ term
in the right-hand side. For odd $p$, this term is a square and does not contribute to
the formula. For $p=2$, this is the formula that comes out of the proof of
\cite[Thm.\ 1.6]{selfduality}. There the main step of the proof (\cite[Thm.\ 3.1]{selfduality})
assumes that $A/K$ has a principal polarization induced by a $K$-rational divisor to
get rid of the $\Sha^{[2]}$ term coming from \cite[Thm.\ 2.2]{selfduality}.
\end{proof}

\begin{corollaryx}
\label{prrkloc}
Let $F=K(\sqrt\alpha,\sqrt\beta)$ be a biquadratic extension of number
fields. For every principally polarized abelian variety $A/K$,
$$
\eqno{(\dagger)}
\begin{array}{llllllllllll}
  \displaystyle
  \rk_{2^\infty}(A/K) + \rk_{2^\infty}(A_{\alpha}/K) + \rk_{2^\infty}(A_{\beta}/K) + \rk_{2^\infty}(A_{\alpha\beta}/K)
    \equiv \qquad \qquad   \\[5pt]
  \displaystyle
  \qquad \qquad \qquad \equiv \ord_2
  \frac{\tam A{K(\sqrt\alpha)}\>\tam A{K(\sqrt\beta)}\>\tam A{K(\sqrt{\alpha\beta})}}
       {\tam AF\>(\tam AK)^2}\\[8pt]
  \qquad \qquad \qquad + \ord_2
  \frac{|\Sha^{[2]}_{\scriptscriptstyle A/K(\sqrt\alpha)}|
        |\Sha^{[2]}_{\scriptscriptstyle A/K(\sqrt\beta)}|
        |\Sha^{[2]}_{\scriptscriptstyle A/K(\sqrt{\alpha\beta})}|}
       {|\Sha^{[2]}_{\scriptscriptstyle A/F}| |\Sha^{[2]}_{\scriptscriptstyle A/K}|^2}
   \rlap{$\qquad\mod 2$.}       \cr
\end{array}
$$
\end{corollaryx}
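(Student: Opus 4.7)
The plan is to specialize Theorem \ref{imrho} to the Brauer relation in the biquadratic extension $F/K$ and match the resulting parity formula to the right-hand side of $(\dagger)$. Throughout, let $G=\Gal(F/K)\cong C_2\times C_2$, let $H_\alpha,H_\beta,H_{\alpha\beta}$ denote the three index-two subgroups corresponding to the intermediate fields $K(\sqrt\alpha)$, $K(\sqrt\beta)$, $K(\sqrt{\alpha\beta})$, and let $\chi_\alpha,\chi_\beta,\chi_{\alpha\beta}$ denote the three non-trivial characters of $G$. The standard Brauer relation in $G$ is
\[
  \Theta \;=\; [\{1\}] \,+\, 2[G] \,-\, [H_\alpha] \,-\, [H_\beta] \,-\, [H_{\alpha\beta}],
\]
coming from the identity $\Ind_{\{1\}}^G\triv + 2\triv = \sum_i \Ind_{H_i}^G\triv$.

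First, I would compute the regulator constants $\RC(\Theta,\rho)$ for each $\rho\in\cS$. Since $p=2$ and $G$ is a $2$-group of exponent $2$, every irreducible $\Q_2G$-representation is one-dimensional and self-dual, so $\cS=\{\triv,\chi_\alpha,\chi_\beta,\chi_{\alpha\beta}\}$. A direct computation of fixed subspaces gives $\RC(\Theta,\triv)=1/2$ and $\RC(\Theta,\chi)=2$ for each non-trivial $\chi$, so $\cS_\Theta=\cS$. Plugging $\Theta$ into Theorem \ref{imrho} (whose hypothesis of a $K$-rational principal polarization is supplied by the fact that $J$ is the Jacobian of a curve with a $K_{\p_0}$-rational divisor class of degree $g-1$; in fact, for the Selmer parity statement any principal polarization will do) then yields
\[
  \sum_{\rho\in\cS} m_\rho \;\equiv\; \ord_2\!\prod_i\tam{A}{F^{H_i}}^{n_i}\,|\Sha^{[2]}_{A/F^{H_i}}|^{n_i} \pmod 2,
\]
which, upon substituting the explicit exponents $(n_i)=(1,2,-1,-1,-1)$ and using that $\ord_2$ is read mod~$2$, matches the right-hand side of $(\dagger)$.

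Second, I would translate the left-hand side. For the quadratic twist, the equivalence $A\times_K K(\sqrt\alpha)\cong A_\alpha\times_K K(\sqrt\alpha)$ identifies the $2^\infty$-Selmer group of $A/K(\sqrt\alpha)$ with that of $A_\alpha/K(\sqrt\alpha)$; decomposing under the $\pm 1$ action of $\Gal(K(\sqrt\alpha)/K)$ yields $\rk_{2^\infty}(A_\alpha/K)=m_{\chi_\alpha}$, and similarly for $\beta$ and $\alpha\beta$. Combined with $\rk_{2^\infty}(A/K)=m_\triv$, the left-hand side of $(\dagger)$ equals $m_\triv+m_{\chi_\alpha}+m_{\chi_\beta}+m_{\chi_{\alpha\beta}}=\sum_{\rho\in\cS_\Theta}m_\rho$, completing the identification.

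The main technical point that requires care is the regulator-constant calculation: since $p=2$, one cannot discard the $\Sha^{[2]}$ contribution on the right-hand side (as one would for odd~$p$), which is why both terms appear in $(\dagger)$. A secondary subtlety is the twist identity $\rk_{2^\infty}(A/K(\sqrt\alpha))=\rk_{2^\infty}(A/K)+\rk_{2^\infty}(A_\alpha/K)$; this holds because the decomposition of the dual Selmer group into $\pm$-eigenspaces under $\Gal(K(\sqrt\alpha)/K)$ is compatible with the $G$-isotypic decomposition used in Theorem \ref{imrho}, using that the induced representation from $K(\sqrt\alpha)$ to $K$ of the Galois module defining the $2^\infty$-Selmer of $A_\alpha/K$ is precisely the $\chi_\alpha$-isotypic piece. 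Apart from these points the argument is a direct specialization of \cite[Thm.\ 1.6]{selfduality}.
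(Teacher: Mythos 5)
Your proposal is correct and follows essentially the same route as the paper: apply Theorem~\ref{imrho} to the Brauer relation $\Theta=\{1\}-C_2^a-C_2^b-C_2^c+2G$ in $\Gal(F/K)\cong C_2\times C_2$, observe that all four regulator constants are $2$ in $\Q^\times/\Q^{\times2}$ (your values $1/2$ and $2$ agree with the paper's modulo squares) so that $\cS_\Theta$ consists of all four characters, and identify the four twisted Selmer ranks with the isotypic multiplicities in the dual $2^\infty$-Selmer group of $A/F$. The extra detail you supply on the twisting identity is correct but not needed beyond what the paper asserts.
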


\begin{proof}
Write ${1}, \Cy_2^a, \Cy_2^b, \Cy_2^c$ for the proper subgroups of $G=\Gal(F/K)$,
and $\triv, \epsilon_a, \epsilon_b, \epsilon_c$ for its 1-dimensional
representations (so $\C[G/\Cy_2^\bullet]\iso\triv\oplus\epsilon^\bullet$
for $\bullet=a,b,c$).
Thus the four $2^\infty$-Selmer ranks in question are the multiplicities of these four
representations in the dual $2^\infty$-Selmer group of $A/F$.
Now apply the theorem to the Brauer relation
\begin{equation}\label{brtheta}
  \Theta = \{1\} - \Cy_2^a - \Cy_2^b - \Cy_2^c + 2G.
\end{equation}
Its regulator constants are (see \cite[2.3 and 2.14]{tamroot})
$$
  \RC_\Theta(\triv)=  \RC_\Theta(\epsilon^a)= \RC_\Theta(\epsilon^b)=\RC_\Theta(\epsilon^c)
    = 2 \>\>\in\Q^\times/\Q^{\times 2},
$$
and so $S_\Theta=\{\triv,\epsilon_a,\epsilon_b,\epsilon_c\}$ in this case.
\end{proof}

\begin{proof}[Proof of Theorem \ref{corrkodd}]
We write the two expressions in $\ord_2(...)$ on the right-hand side
of Corollary \ref{prrkloc} as a product of local terms.
The modified Tamagawa numbers $\tam JK, \tam J{K(\sqrt\alpha)}, \ldots$
are, by definition, products over primes of $K$, $K(\sqrt\alpha)$, $\ldots$,
and we group all terms by primes of $K$.
Similarly, as shown by Poonen and Stoll in \cite[\S8]{PS},
the parity of $\ord_2\Sha^{[2]}$ is a sum of local terms that are 1 or 0
depending on whether $\Pic^{g-1}(\X)$ is empty or not over the corresponding
completion, and again we group them by primes $\p$ of $K$. This results
in an expression
$$
  \rk_{2^\infty}(J/K) + \rk_{2^\infty}(J_{\alpha}/K) + \rk_{2^\infty}(J_{\beta}/K) + \rk_{2^\infty}(J_{\alpha\beta}/K)
     \equiv \sum_\p t_\p \mod 2.
$$
There are three cases to consider for $\p$:

If there are several primes $\q_i|\p$ in $F$, then the decomposition
groups of $\q_i$ are cyclic, and this forces $t_\p=0$.  This is a
general fact about Brauer relations and functions of number fields
that are products of local terms, see \cite[2.31, 2.33, 2.36(l)]{tamroot}.

If there is a unique prime $\q|\p$ in $F$, then $\X(K_\p)\ne\varnothing$
by assumption. So $\Pic^{g-1}(\X)$ is non-empty in every extension of
$K_\p$, and all the local terms for $\Sha^{[2]}$ above $\p$ vanish.
Also $J$ has semistable reduction, again by assumption, so its N\'eron
minimal model stays minimal in all extensions. The term
$|{\omega}/{\neron{v}}|_v$ always cancels in Brauer relations in this
case, see e.g. \cite[2.29]{tamroot}.  So the only contribution to
$t_\p$ comes from Tamagawa numbers.

When $\p\ne\p_0$, the Jacobian $J$ has good reduction and the Tamagawa
numbers are trivial, so $t_\p=0$. Finally, if $\p=\p_0$, then $J$ has
split semistable reduction at $\p$ of toric dimension 1.  In this
case, the Tamagawa number term at $\p$ multiplies to
$2\in\Q^\times/\Q^{\times 2}$, in other words $t_\p=1$. This follows
e.g. from \cite[3.3, 3.23]{tamroot} for the Brauer relation
\eqref{brtheta}.  This proves the claim for the $2^\infty$-Selmer
rank.

It remains to deduce the formula for $\rk_2$ from the one for
$\rk_{2^\infty}$.  The difference between $\rk_2$ and $\rk_{2^\infty}$
comes from $\Sha^{[2]}$ and the 2-torsion in the Mordell--Weil group
on $J$, $J_{\alpha}$, $J_{\beta}$, and $J_{\alpha\beta}$.  Two-torsion is
the same for all four twists, and so gives an even contribution.  As
for $\Sha^{[2]}$, the local terms that define its parity give an even
contribution at every prime of $K$ that splits in $F$, as the twists
then come in isomorphic pairs.  At the non-split primes, all four
twists have local points by assumption, and so the local terms are 0.
\end{proof}

\subsection*{Acknowledgments}

We thank Jean-Louis Colliot-Th\'el\`ene, John Cremona, Noam Elkies,
Tom Fisher, Bjorn Poonen, Peter Sarnak, Arul Shankar, and Michael
Stoll for many helpful conversations. The first and third authors were
supported by a Simons Investigator Grant and NSF grant~DMS-1001828,
and the second author was supported by NSF grant~DMS-0901102. The authors
of the appendix were supported by Royal Society University Research
Fellowships.

\end{document}